\documentclass[11pt]{article}
\usepackage{subfigure}
\usepackage{authblk}
\usepackage{hyperref}
\usepackage{float, array}
\usepackage{mathtools}
\usepackage{amssymb,amsthm,amsmath,mathrsfs,fullpage,multirow}
\usepackage{tikz}
\usetikzlibrary{matrix}

\usetikzlibrary{arrows.meta, positioning}

\DeclareMathOperator{\ind}{ind}

\renewcommand\S{\mathcal S}
\newcommand\M{\mathcal M}

\newtheorem{theorem}{Theorem}[section]
\newtheorem{lemma}[theorem]{Lemma}
\newtheorem{proposition}[theorem]{Proposition}

\theoremstyle{definition}
\newtheorem{definition}[theorem]{Definition}
  
\newtheorem{remark}[theorem]{Remark}
\newtheorem{example}[theorem]{Example}

\newcommand{\thistheoremname}{}
\newtheorem*{genericthm*}{\thistheoremname}
\newenvironment{namedthm*}[1]
  {\renewcommand{\thistheoremname}{#1}%
   \begin{genericthm*}}
  {\end{genericthm*}}

\usepackage{enumerate}

\title{Seaweed algebras with restricted part sizes}
\author[1]{Kassie Archer}
\author[1]{Aaron Geary}
\author[1]{Robert P. Laudone}
\affil[1]{{\small Department of Mathematics, United States Naval Academy, Annapolis, MD, 21402}}
\affil[ ]{{\small Email: \{karcher, geary, laudone\}@usna.edu }}
\date{}
\begin{document}

\maketitle

\begin{abstract}
Seaweed algebras are a class of Lie algebras that are naturally characterized by a pair of compositions, which in turn are represented visually as planar graphs called meanders. These meanders provide a straightforward method for computing the index of the associated algebra. The goal of this paper is to enumerate those seaweed algebras with a fixed index and whose associated compositions have restricted part sizes. In particular, we enumerate those with composition part sizes from so-called acyclic sets. 
We also establish a bijection between sets of indecomposable seaweed algebras with meanders with certain restricted part sizes and sets of permutations with restricted displacements. In certain cases, the index of the algebra can be determined by a simple statistic on the permutation. 

\end{abstract}

\section{Introduction and Background}

Seaweed algebras, first introduced by Dergachev and Kirillov \cite{DK00}, are a particular type of subalgebra of $\mathfrak{gl}_n$, the matrix Lie algebra\footnote{There are various types of seaweed algebras, some of which are defined as subalgebras of the special linear Lie algebra $\mathfrak{sl}_n$. However, in this paper, we almost always consider those that are subalgebras of $\mathfrak{gl}_n$.}. These seaweed algebras are characterized in terms of a pair of compositions $\lambda = (\lambda_1, \ldots, \lambda_r)$ and $\mu=(\mu_1,\ldots, \mu_s)$ where $\sum_{i=1}^r \lambda_i=\sum_{i=1}^s \mu_i = n$. In particular the seaweed algebra associated to the pair $(\lambda, \mu),$ which we denote by $\mathfrak{p}_n\frac{\lambda}{\mu}$, are exactly those matrices where any nonzero entry above the diagonal must lie in blocks along the diagonal whose sizes are determined by the composition $\lambda$ and where any nonzero entry below the diagonal must similarly lie in blocks along the diagonal whose sizes are determined by $\mu.$ An example of such a matrix can be found in Figure~\ref{fig: meander}. More about these algebras and how they're constructed can be found in \cite{Cet22}. The \emph{meander} associated to a seaweed algebra $\mathfrak{p}_n\frac{\lambda}{\mu}$ is a planar graph constructed in the following way:
\begin{itemize}
    \item Start with $n$ nodes arranged in a horizontal line.
    \item The upper and lower compositions of $n$, $\lambda$ and $\mu$, dictate how to draw arcs above and below the nodes. For each part $\lambda_i$, arcs connect pairs of nodes within that block in a nested fashion. Similarly, the composition $\mu$ determines the arcs drawn below the nodes.
\end{itemize}
For the purposes of this paper, we denote the meander itself by $\frac{\lambda}{\mu}.$
One reason that the meander associated to a seaweed algebra so useful is that it provides a straightforward way to calculate the index of the seaweed algebra.


The \emph{index} of a Lie algebra is a fundamental invariant. For a given Lie algebra $\mathfrak{g}$, its index, denoted $\text{ind}(\mathfrak{g})$, is the minimum dimension of the stabilizers of elements in the dual space $\mathfrak{g}^*$ under the coadjoint action:
\[
\ind(\mathfrak{g}) = \min_{f \in \mathfrak{g}^*} \dim(\mathfrak{g}_f),
\]
where $\mathfrak{g}_f = \{x \in \mathfrak{g} \mid f([x, y]) = 0 \text{ for all } y \in \mathfrak{g}\}$. Calculating the index of a Lie algebra can generally be quite challenging. 
As shown in \cite{DK00}, the index of a seaweed algebra can be read from the structure of the associated meander. Let $c$ be the number of cycles and $p$ be the number of paths (including isolated nodes) in the meander associated with seaweed algebra $\mathfrak{p}_n\frac{\lambda}{\mu}$. Then we have that
\[
\ind(\mathfrak{p}_n\tfrac{\lambda}{\mu}) = 2c + p.
\]
For ease of notation, we write $\ind(\frac{\lambda}{\mu})$ to denote the index for the seaweed algebra associated with the meander $\frac{\lambda}{\mu}$, i.e., $\ind(\frac{\lambda}{\mu}):= \ind(\mathfrak{p}_n\frac{\lambda}{\mu})$.

\begin{example} In Figure \ref{fig: meander}, we present a general matrix in $\mathfrak{p}_n\frac{\lambda}{\mu}$ together with the meander $\frac{\lambda}{\mu}$ where $\lambda=(5,3)$ and $\mu=(2,1,2,1,2)$. 
Blue is used for the upper arcs associated with $\lambda$ and red for the lower arcs associated with $\mu$. There is one cycle and two paths (including one degenerate path, an isolated node) in this meander, so we have $\text{ind}(\frac{\lambda}{\mu})=4$. 

\begin{figure}[h]
    \begin{center}
    \begin{tabular}{|>{\centering\arraybackslash}m{5.5cm}| >{\centering\arraybackslash}m{8cm}|}
    \hline 
    \begin{tikzpicture}
        \node at (0,0) {$\left[\begin{array}{rrrrrrrr}
            * & * & * &* & * & & & \\
             * & * & * &* & * & & & \\
              &  & * &* & * & & & \\
              &  &  &* & * & & & \\
              &  &  &* & * & & & \\
              & &  & &  & * & *&* \\
              & &  & &  &  & *&* \\
              & &  & &  & & *&* \\
        \end{array}\right]$};
        \draw [blue, thick, dotted] (-2.1,1.85)--(0.5,1.85)--(0.5,-.5)--(2.05,-.5)--(2.05,-1.9);
        \draw [red,thick, dotted] (-2.1,1.85)--(-2.1,1)--(-1.05,1) -- (-1.05,.5)--(-.5,.5)--(-.5,-.5)--(.5,-.5)--(.5,-1)--(1,-1)--(1,-1.9)--(2.05,-1.9);
    \end{tikzpicture}  &{
\begin{tikzpicture}[
    dot/.style={circle, fill, inner sep=1.5pt},
    arc/.style={line width=0.6mm}
]

    \draw[arc, blue] (1,0) to[bend left=60] (5,0);
    \draw[arc, blue] (2,0) to[bend left=45] (4,0);

    \draw[arc, blue] (6,0) to[bend left=60] (8,0);

    \draw[arc, red] (1,0) to[bend right=60] (2,0);

    \draw[arc, red] (4,0) to[bend right=60] (5,0);

    \draw[arc, red] (7,0) to[bend right=60] (8,0);

    \foreach \x in {1,...,8} {
        \node[dot] at (\x,0) {};
    }

\end{tikzpicture}} \\ \hline
\end{tabular}
\caption{On the left is the form of a matrix in $\mathfrak{p}_n\frac{5|3}{2|1|2|1|2}$ and on the right, is the meander associated to this seaweed algebra, denoted as $\frac{5|3}{2|1|2|1|2}$.}
\label{fig: meander}
\end{center}
\end{figure}
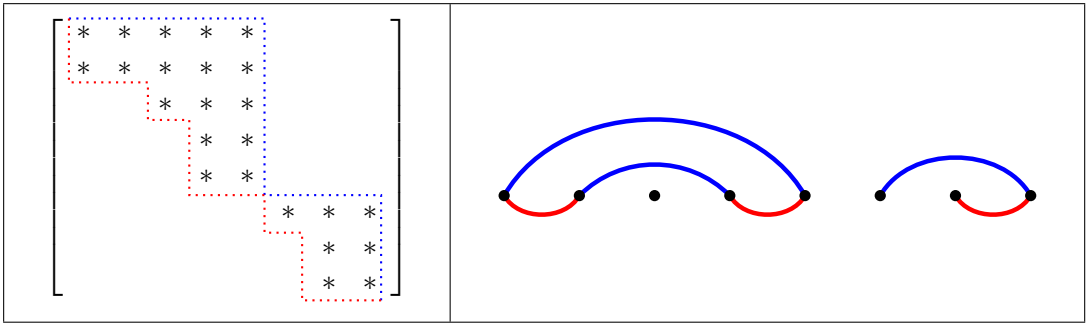

\end{example}

The combinatorial structure of meanders has led to interesting results concerning their index.
In \cite{CHMW15} the authors provide a recursive classification of meander graphs, showing that each meander is identified by a unique sequence of fundamental graph theoretic moves in a process called \emph{winding down}.
There have also been various formulas to compute the index based on the greatest common divisor of part sizes in the compositions. For example, in \cite{CGM11} the authors show the index of the meander $\frac{a|b|c}{n}$ is $\gcd(a+b,b+c)$. The question of whether or not a similar formula could be found when the top composition of four or more parts was addressed in \cite{KL17} where the authors found that no such general formula exists. Other formulas for index computation can be found in \cite{CCH20} and \cite{Cet22}. 

While many have focused on computation of index based on the combinatorial properties of the meander, others have used these meanders to define statistics on integer partitions (i.e., unordered compositions). In \cite{SY20}, the authors consider the generating function for partitions into odd parts, and in \cite{M23} the author finds partial formulae for the index of partitions whose parts come from a restricted set. 

Our primary motivation in this work is the enumeration of meanders by their index. When the part sizes of the compositions are restricted, we present bivariate generating functions that track composition size of $n$ and index $k$.

\begin{table}
    \centering
    \begin{tabular}{c|c||c|c}
        $J$ & Theorem & $J$ & Theorem \\ \hline\hline
        $\{1,2\}$ & \ref{thm: 12Indec} & $\{2,3\}$ & \ref{thm: 23} \\ \hline
        $\{1,3\}$ & \ref{thm: 13Indec} & $\{3,4\}$ & \ref{thm: 34} \\ \hline
        $\{1,4\}$ & \ref{thm: 14Indec} & $\{1,2,3\}$ & \ref{thm: 123} \\ \hline
        $\{1,5\}$ & \ref{thm: 15Indec} & $\{1,3,4\}$ & \ref{thm: 134} \\ \hline
        $\{1,7\}$ & \ref{thm: 17Indec} & $\{1,2,n\}$ & \ref{thm:12n} 
    \end{tabular}
    \caption{For each $J\subseteq[n]$ listed here, the associated theorem presents a bivariate generating function that enumerates meanders by length and index with part sizes restricted to elements from $J$ (or with elements from $cJ$ for any positive integer $c$).}
    \label{tab:summary}
\end{table}

The paper is organized in the following way. Section \ref{sec: structure} explores the structural properties of meanders and their index arising from seaweed algebras with restricted part sizes, with special attention to indecomposable meanders. We then use these properties in Sections \ref{sec: index}, \ref{sec: moreacyclic}, and \ref{sec: cyclic}, where we present our main enumerative results. In Section \ref{sec: index}, we enumerate meanders by length and index when composition part sizes are restricted to the set $\{1,j\}$ for certain values of $j$ for which the meanders are acyclic. In Section \ref{sec: moreacyclic}, we enumerate meanders by length and index where parts are restricted to other sets $J$ associated to acyclic meanders. In Section \ref{sec: cyclic}, present a few results regarding non-acyclic meanders. Table \ref{tab:summary} provides a quick reference  for our primary enumerative theorems from these three sections. In Section \ref{sec: bijection}, we define a bijection between indecomposable meanders with restricted part sizes and permutations with restricted displacements. We then investigate how the index of a meander translates to a particular statistic of the associated permutation under this bijection. Section \ref{sec: conclusion} concludes the paper and discusses potential areas for future research and related open questions.

\subsection{Notation}

Throughout this paper, we use Greek letters to denote integer compositions. We say the ordered list $\lambda = (\lambda_1,\ldots,\lambda_r)$ is a composition of $n$ if $\lambda_1+\cdots +\lambda_r=n$. In this case, we write $\lambda\vdash n.$ For the ease of notation, we will often write $\lambda$ as  $\lambda_1|\lambda_2|\cdots|\lambda_r$ instead of as an ordered list. Given two compositions $\lambda\vdash n$ and $\mu\vdash m$, we denote the concatenation of these compositions by $\lambda|\mu$. For example, if $\lambda = 3|1|4$ and $\mu = 6|1|3$, then $\lambda|\mu = 3|1|4|6|1|3.$ Additionally, if a particular element $k$ is repeated $\ell$ times, we may write $k^\ell$ to denote this list of elements. For example, instead of writing the composition $3|3|1|1|1|1|1|1|2,$ we may simply write $3^2|1^6|2.$ We denote the set of partial sums of $\lambda$ by $PS(\lambda).$ For example, if $\lambda = 3|1|4|6|1|1$, then $PS(\lambda) = \{3,4,8,14,15,16\}.$

We denote by $\M_n$ the set of all meanders with $n$ nodes and we write the elements of $\M_n$ as $\frac{\lambda}{\mu}$ for some $\lambda,\mu \vdash n$.  Note that $|\M_n| = 4^{n-1}$. For example, 
\[\M_1 = \{\tfrac{1}{1}\} \quad \text{ and } \quad \M_2=\{\tfrac{1|1}{1|1},\tfrac{2}{1|1},\tfrac{1|1}{2},\tfrac{2}{2}\}. \]
Recall that we define $\ind(\frac{\lambda}{\mu})$ to be the index of the associated seaweed algebra and that it can be computed as twice the number of cycles plus the number of paths in the meander graph.
We let $a_{n,k}$ be the number of meanders in $\M_n$ with index $k$, and let $A(x,y)=\sum_{n,k\geq 0} a_{n,k}x^ny^k$ be the associated generating function. 
For a fixed set $J$ of positive integers, we use $\mathcal{M}^J_n$ to denote the set of meanders $\frac{\lambda}{\mu}\in\mathcal{M}_n$ where $\lambda$ and $\mu$ are composed only of elements from $J$, and let $a_{n,k}^J$ be the number of meanders in $\mathcal{M}^J_n$ with index $k$. The associated generating function is denoted $A_J(x,y)=\sum_{n,k\geq 0} a_{n,k}^Jx^ny^k$. 


 Given two meanders $\frac{\lambda_1}{\mu_1}\in \M_n$ and $\frac{\lambda_2}{\mu_2}\in \M_m$, we define the direct sum $\frac{\lambda_1}{\mu_1}\oplus\frac{\lambda_2}{\mu_2}\in \M_{n+m}$ to be the meander $\frac\lambda\mu = \frac{\lambda_1|\lambda_2}{\mu_1|\mu_2}.$ Notice  it follows immediately from the definition that \[\ind(\tfrac\lambda\mu) = \ind(\tfrac{\lambda_1}{\mu_1})+\ind(\tfrac{\lambda_2}{\mu_2}).\]
We define a \textit{decomposable} meander to be one that is the direct sum of strictly smaller meanders and say a meander is \textit{indecomposable} if it is not decomposable. We denote the set of indecomposable meanders in $\M_n$ by $\mathcal{I}_n$ and denote the indecomposable meanders in $\M_n^J$ by $\mathcal{I}^J_n$. We  let $i_{n,k}^J$ be the number of meanders in $\mathcal{I}_n^J$ with index $k$ and let the associated generating function be denoted by $I_J(x,y) = \sum_{n,k\geq 0}i_{n,k}^J x^ny^k.$

\section{Structure of seaweed meanders} \label{sec: structure}

In this section, we establish certain structural properties of meanders and their indices.  Since many of our results rely on the idea of indecomposable meanders, we first enumerate the set of indecomposable meanders.

\begin{proposition}
    For $n \geq 2$, $|\mathcal{I}_n| = 3^{n-1}$.
\end{proposition}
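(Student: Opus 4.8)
The plan is to reduce the count to a simple combinatorial identity via the standard correspondence between compositions and subsets. Recall that a composition $\lambda \vdash n$ is completely determined by its set of partial sums $PS(\lambda)$, and conversely every subset of $\{1,\ldots,n-1\}$ arises as $PS(\lambda)\cap\{1,\ldots,n-1\}$ for a unique composition $\lambda$ (the largest partial sum is always $n$). This correspondence already recovers $|\mathcal{M}_n| = 4^{n-1}$, since a meander is an ordered pair of such subsets, each chosen from an $(n-1)$-element set. The goal is then to identify which pairs give \emph{indecomposable} meanders.

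First I would characterize decomposability in terms of partial sums. A meander $\frac{\lambda}{\mu}$ splits as a direct sum at an internal node $j$ (with $0<j<n$) precisely when both $\lambda$ and $\mu$ can be cut at $j$, i.e.\ when $j\in PS(\lambda)\cap PS(\mu)$. Concretely, if $j$ is a common partial sum, then every $\lambda$-block and every $\mu$-block lies entirely among the first $j$ nodes or entirely among the last $n-j$ nodes, so no arc crosses position $j$ and $\frac{\lambda}{\mu}=\frac{\lambda'}{\mu'}\oplus\frac{\lambda''}{\mu''}$ with both summands of size strictly between $0$ and $n$, hence strictly smaller. Conversely, any nontrivial direct-sum decomposition is a concatenation of compositions and so forces a cut point $j$ lying in both $PS(\lambda)$ and $PS(\mu)$. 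Thus $\frac{\lambda}{\mu}$ is indecomposable if and only if $PS(\lambda)\cap PS(\mu)\cap\{1,\ldots,n-1\}=\emptyset$.

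With this characterization in hand, the enumeration becomes the count of ordered pairs of disjoint subsets $(S,T)$ of the $(n-1)$-element set $\{1,\ldots,n-1\}$, where $S$ and $T$ are the internal partial-sum sets of $\lambda$ and $\mu$ respectively. Each of the $n-1$ elements falls into exactly one of three classes, independently of the others: in $S$ only, in $T$ only, or in neither. This gives $3^{n-1}$ such pairs, and hence $|\mathcal{I}_n| = 3^{n-1}$.

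I expect the main obstacle to be the careful justification of the decomposability characterization, in particular verifying both directions of the claim that $\frac{\lambda}{\mu}$ decomposes if and only if $\lambda$ and $\mu$ share an internal partial sum, and confirming that such a shared partial sum yields summands that are genuinely strictly smaller (so that the decomposition is nontrivial in the sense required by the definition). Once this structural equivalence is pinned down, the enumeration reduces to the immediate three-way-choice argument above.
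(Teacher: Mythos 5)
Your proposal is correct and follows the same route as the paper: characterize indecomposability by $PS(\lambda)\cap PS(\mu)=\{n\}$ and then count ordered pairs of disjoint subsets of $\{1,\ldots,n-1\}$ by the three-way choice for each element. You simply spell out the decomposability characterization in more detail than the paper does.
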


\begin{proof}
Note that a meander $\frac{\lambda}{\mu}\in\M_n$ is indecomposable  if and only if $PS(\lambda)\cap PS(\mu)=\{n\}$, that is, if their set of partial sums are disjoint except for their total sum $n$. Thus the integers $1$ to $n-1$ must either be in $PS(\lambda), PS(\mu)$, or in neither set. Each possible distribution of these integers is associated to a unique meander, and the result immediately follows.
 \end{proof}


If a meander is decomposable, we can write it uniquely as a direct sum of indecomposable parts as $\frac\lambda\mu = \bigoplus_{i=1}^\ell \frac{\lambda_i}{\mu_i}.$ In this case, it is clear that the index of the meander $\frac\lambda\mu$ is equal to sum of the indices of the meanders $\frac{\lambda_i}{\mu_i}$, as we state in the next proposition. 

\begin{proposition} \label{prop: ItoA}
    If we write meander $\frac\lambda\mu$ as a direct sum $\frac\lambda\mu = \bigoplus_{i=1}^\ell \frac{\lambda_i}{\mu_i}$ of its indecomposable parts, then 
    \[
    \ind\bigg(\frac\lambda\mu\bigg) = \sum_{i=1}^\ell \ind\bigg(\frac{\lambda_i}{\mu_i}\bigg).
    \] Furthermore, if  $A_J(x,y)$ is the the generating function for all meanders with elements in $J$ and $I_J(x,y)$ is the generating function for the indecomposable meanders with elements in $J$, then \[A_J(x,y) = \frac{1}{1-I_J(x,y)}.\]
\end{proposition}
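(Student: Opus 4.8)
The plan is to prove the generating function identity $A_J(x,y) = \frac{1}{1-I_J(x,y)}$ using the standard symbolic combinatorics approach: every meander decomposes uniquely into a sequence of indecomposable meanders, which translates directly into a geometric-series (SEQ) construction on generating functions. The first proposition already established that every meander factors uniquely as a direct sum of indecomposables, and Proposition~\ref{prop: ItoA} records that index is additive over $\oplus$; length (the parameter $n$) is additive as well since $\frac{\lambda_1}{\mu_1}\oplus\frac{\lambda_2}{\mu_2}\in\M_{n+m}$. These two additivity facts are exactly what make the bivariate generating function multiplicative under $\oplus$.

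First I would set up the bijection precisely. For each $\ell\ge 1$, a decomposable or indecomposable meander in $\M^J$ with $\ell$ indecomposable summands corresponds bijectively to an ordered $\ell$-tuple $\left(\frac{\lambda_1}{\mu_1},\dots,\frac{\lambda_\ell}{\mu_\ell}\right)$ of indecomposable meanders in $\M^J$, via the unique decomposition $\frac{\lambda}{\mu}=\bigoplus_{i=1}^\ell \frac{\lambda_i}{\mu_i}$. Uniqueness of the decomposition guarantees this is a genuine bijection (the order of summands matters and is preserved, since $\oplus$ is defined by concatenation of compositions). I would emphasize that each summand lies in $\M^J$ because restricting all part sizes to $J$ is preserved under concatenation and under taking indecomposable pieces.

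Next I would translate the bijection into generating functions. Under the correspondence, $n=\sum_i n_i$ and, by Proposition~\ref{prop: ItoA}, $k=\sum_i k_i$, so the monomial $x^n y^k$ factors as $\prod_{i=1}^\ell x^{n_i}y^{k_i}$. Summing over all $\ell$-tuples of indecomposables therefore yields $I_J(x,y)^\ell$ for the contribution of meanders with exactly $\ell$ indecomposable parts. Summing over all $\ell\ge 1$ and adding the empty meander (the $\ell=0$ term, contributing $1$) gives
\[
A_J(x,y) = \sum_{\ell\ge 0} I_J(x,y)^\ell = \frac{1}{1-I_J(x,y)},
\]
where the geometric series converges as a formal power series because $I_J(x,y)$ has no constant term (every indecomposable meander has $n\ge 1$).

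The main obstacle is bookkeeping rather than conceptual: I must be careful about the $\ell=0$ boundary case and whether the empty meander is included in $A_J$, and I must confirm that $I_J$ has zero constant term so that the formal geometric series is well-defined. One subtlety worth a sentence is that a single indecomposable meander ($\ell=1$) is itself not decomposable, so the $\ell=1$ term correctly counts $\mathcal I^J$; the uniqueness in the first proposition is what prevents double-counting across different values of $\ell$. Once these edge cases are pinned down, the identity follows immediately from the SEQ/geometric-series principle.
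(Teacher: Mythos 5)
Your proposal is correct and follows essentially the same route as the paper: the paper's (much terser) proof likewise invokes the unique decomposition into indecomposables together with additivity of both index and length under $\oplus$, which is exactly the SEQ/geometric-series argument you spell out. Your added care about the $\ell=0$ term and the vanishing constant term of $I_J$ is sound bookkeeping that the paper leaves implicit.
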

\begin{proof}
    The first statement follows from the fact that the index of a direct sum is the sum of the index. The second statement is an immediate consequence of this fact together with the fact that the length of the direct sum is the sum of the lengths of the indecomposable parts. 
\end{proof}

We can also describe an inflation of a meander. We say the \textit{inflation} of meander $\frac\lambda\mu\in\M_n$ by a factor of $c$ is the meander $\frac{c\lambda}{c\mu}\in\M_{cn}$ where $c\lambda = c\lambda_1|c\lambda_2|\cdots|c\lambda_r$ and $c\mu = c\mu_1|c\mu_2|\cdots|c\mu_s.$ For example, if we have $\frac\lambda\mu=\frac{3|1|4|4|1}{4|1|1|3|3|1}\in \M_{13}$, then the inflation of this meander by a factor of 2 would be $\frac{2\lambda}{2\mu}=\frac{6|2|8|8|2}{8|2|2|6|6|2}\in\M_{26}.$ Note that if $\frac\lambda\mu$ only has parts in set $J =\{j_1,\ldots, j_t\}$, then $\frac{c\lambda}{c\mu}$ only has parts in set $cJ =\{cj_1,\ldots, cj_t\}$.

\begin{proposition} \label{prop: scaleIndex}
    If $\ind(\frac\lambda\mu)=k$, then $\ind(\frac{c\lambda}{c\mu}) = ck.$
\end{proposition}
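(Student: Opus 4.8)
The plan is to track precisely how the connected components of $\frac\lambda\mu$ transform under inflation, using the description of the index as twice the number of cycles plus the number of paths. I would index the $cn$ nodes of $\frac{c\lambda}{c\mu}$ by pairs $(i,t)$ with $i\in\{1,\dots,n\}$ the original node and $t\in\{1,\dots,c\}$, so that $(i,t)$ sits at position $c(i-1)+t$. The first key step is a purely local computation on a single block: since the arcs within a block of size $s$ are nested, joining local positions $\ell$ and $s+1-\ell$, inflating that block to size $cs$ and rewriting everything in $(i,t)$-coordinates shows that a top arc joining original nodes $i$ and $j$ lifts to the arcs $(i,t)\leftrightarrow(j,c+1-t)$ for every $t$, and likewise for bottom arcs. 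In particular, the involution $t\mapsto c+1-t$ governs the second coordinate, and a node that is the midpoint of an odd block (hence carries no arc in $\frac\lambda\mu$) ``opens up'': its fiber acquires the arcs $(i,t)\leftrightarrow(i,c+1-t)$ whenever $t\neq c+1-t$.

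The second step is to observe that traversing the inflated meander projects onto a traversal of the original: following an arc changes the first coordinate exactly as in $\frac\lambda\mu$ while applying the involution $t\mapsto c+1-t$ to the second coordinate. Since arcs alternate top/bottom along any path or cycle, the second coordinate merely oscillates within a single pair $\{t,c+1-t\}$. Consequently the fiber over each connected component of $\frac\lambda\mu$ splits as a disjoint union indexed by the $\lfloor c/2\rfloor$ pairs $\{t,c+1-t\}$ with $t\neq c+1-t$, together with the fixed point $t=(c+1)/2$ when $c$ is odd. It then suffices to analyze these fibers one component at a time and to sum their contributions to twice the number of cycles plus the number of paths.

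For an original cycle on $2m$ nodes, each pair $\{t,c+1-t\}$ contributes two disjoint $2m$-cycles and the fixed point (if present) contributes one, for a total of $c$ cycles; thus a cycle contributing $2$ to $\ind(\frac\lambda\mu)$ lifts to a contribution of $2c$. The subtle case -- and the step I expect to be the main obstacle -- is the path. Here the two endpoints of the original path are block midpoints that open up under inflation, so for each pair $\{t,c+1-t\}$ the two strands over the path become joined at both ends into a single cycle rather than remaining two paths; only the fixed-point fiber (when $c$ is odd) survives as an honest path. A careful trace, keeping track of how many times the involution is applied along the path (which depends on the parity of the path length), confirms that a path lifts to $\lfloor c/2\rfloor$ cycles plus, when $c$ is odd, one path, giving a contribution of $2\lfloor c/2\rfloor+(c\bmod 2)=c$ in every case. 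Summing the contribution $c\cdot(\text{original})$ over all components then yields $\ind(\frac{c\lambda}{c\mu})=c\,\ind(\frac\lambda\mu)$, and reconciling the two parity regimes for $c$ is exactly what makes the bookkeeping uniform.
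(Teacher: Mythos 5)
Your proposal is correct and follows essentially the same route as the paper's proof: both analyze how each arc of $\frac\lambda\mu$ lifts to $c$ nested arcs and how each arc-free node opens into $\lfloor c/2\rfloor$ nested arcs (plus a fixed point when $c$ is odd), concluding that every cycle, path, and isolated node contributes $c$ times its original amount to the index. Your version simply makes the paper's observation rigorous via the $(i,t)$ coordinates and the involution $t\mapsto c+1-t$.
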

\begin{proof}
    Let us note that each (upper or lower) arc in the meander $\frac\lambda\mu$ from $i_1$ to $i_2$ with $i_1<i_2$ becomes $c$ nested arcs from 
    $\{c(i_1-1)+1,\ldots,ci_1\}$ to $\{c(i_2-1)+1,\ldots,ci_2\}$. Furthermore, each point $i$ that is not part of an arc (on either the upper or lower part) becomes a nest of $\lfloor\frac{c}{2}\rfloor$ arcs in positions $\{c(i-1)+1,\ldots,ci\}$ (together with an isolated point in the middle if $c$ is odd). As a result, any isolated point becomes a nested set of circles (and an isolated point if $c$ is odd), each meander becomes a nested set of circles (and one meander if $c$ is odd), and each circle becomes a nested set of circles. The result follows from this observation.
\end{proof}

An example of this proposition can be observed in Figure~\ref{fig:inflation}.

\begin{figure}[H]
    \centering
        \begin{tikzpicture}[scale=1]
       \draw[line width=.5mm, red](0,0) to[bend left=45] (4,0);
       \draw[line width=.5mm, red](1,0) to[bend left=45] (3,0);
       \draw[line width=.5mm,blue](5,0) to[bend left=45] (6,0);
       \draw[line width=.5mm,red]
       (0,0) to[bend right=45] (1,0);
       \draw[line width=.5mm,blue](2,0) to[bend right=45] (5,0);
       \draw[line width=.5mm,red](3,0) to[bend right=45] (4,0);
        \foreach \x in {0,...,7}{
       \draw[circle,fill] (\x,0)circle[radius=1mm];
    }
    \end{tikzpicture}
    
            \begin{tikzpicture}[scale=.5]
    
       \draw[line width=.5mm, red](0,0) to[bend left=45] (9,0);
       \draw[line width=.5mm, red](1,0) to[bend left=45] (8,0);
       \draw[line width=.5mm, red](2,0) to[bend left=45] (7,0);
       \draw[line width=.5mm, red](3,0) to[bend left=45] (6,0);
       \draw[line width=.5mm, blue](4,0) to[bend left=45] (5,0);
       \draw[line width=.5mm,blue](10,0) to[bend left=45] (13,0);
       \draw[line width=.5mm,blue](11,0) to[bend left=45] (12,0);
       \draw[line width=.5mm,black](14,0) to[bend left=45] (15,0);
       \draw[line width=.5mm,red]
       (0,0) to[bend right=45] (3,0);
       \draw[line width=.5mm,red]
       (1,0) to[bend right=45] (2,0);
       \draw[line width=.5mm,blue](4,0) to[bend right=45] (11,0);
       \draw[line width=.5mm,blue]
       (5,0) to[bend right=45] (10,0);
       \draw[line width=.5mm,red]
       (6,0) to[bend right=45] (9,0);
       \draw[line width=.5mm,red]
       (7,0) to[bend right=45] (8,0);
       \draw[line width=.5mm,blue](12,0) to[bend right=45] (13,0);
       \draw[line width=.5mm,black](14,0) to[bend right=45] (15,0);
       \foreach \x in {0,...,15}{
       \draw[circle,fill] (\x,0)circle[radius=1mm];
    }
    \end{tikzpicture}
    
    \begin{tikzpicture}[scale=.33]

       \draw[line width=.5mm, red](0,0) to[bend left=45] (14,0);
       \draw[line width=.5mm, red](1,0) to[bend left=45] (13,0);
       \draw[line width=.5mm, red](2,0) to[bend left=45] (12,0);
       \draw[line width=.5mm, red](3,0) to[bend left=45] (11,0);
       \draw[line width=.5mm, red](4,0) to[bend left=45] (10,0);
       \draw[line width=.5mm, red](5,0) to[bend left=45] (9,0);
       \draw[line width=.5mm, blue](6,0) to[bend left=45] (8,0);
       \draw[line width=.5mm,blue](15,0) to[bend left=45] (20,0);
       \draw[line width=.5mm,blue](16,0) to[bend left=45] (19,0);
       \draw[line width=.5mm,blue](17,0) to[bend left=45] (18,0);
       \draw[line width=.5mm,black](21,0) to[bend left=45] (23,0);
       \draw[line width=.5mm,red]
       (0,0) to[bend right=45] (5,0);
       \draw[line width=.5mm,red]
       (1,0) to[bend right=45] (4,0);
       \draw[line width=.5mm,red]
       (2,0) to[bend right=45] (3,0);
       \draw[line width=.5mm,blue](6,0) to[bend right=45] (17,0);
       \draw[line width=.5mm,blue]
       (7,0) to[bend right=45] (16,0);
       \draw[line width=.5mm,blue]
       (8,0) to[bend right=45] (15,0);
       \draw[line width=.5mm,red]
       (9,0) to[bend right=45] (14,0);
       \draw[line width=.5mm,red]
       (10,0) to[bend right=45] (13,0);
       \draw[line width=.5mm,red]
       (11,0) to[bend right=45] (12,0);
       \draw[line width=.5mm,blue](18,0) to[bend right=45] (20,0);
       \draw[line width=.5mm,black](21,0) to[bend right=45] (23,0);
       \foreach \x in {0,...,23}{
       \draw[circle,fill] (\x,0)circle[radius=1mm];
    }
    \end{tikzpicture}
    \caption{The meanders $\frac\lambda\mu, \frac{2\lambda}{2\mu},$ and $\frac{3\lambda}{3\mu}$ with $\frac\lambda\mu=\frac{5|2|1}{2|4|1|1}.$ Note these meanders have indices 4, 8, and 12, respectively.}
    \label{fig:inflation}
\end{figure}
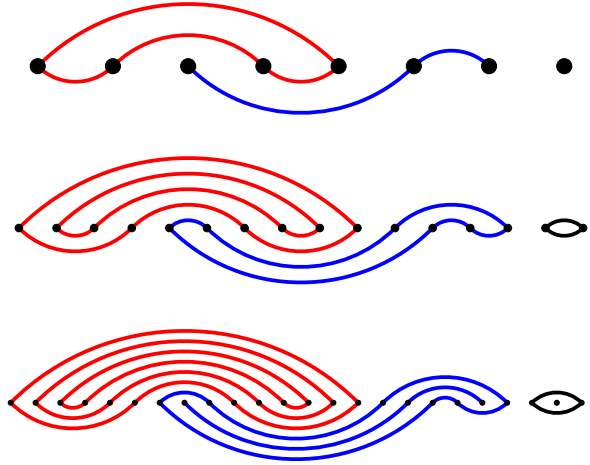


For any meander of the form $\frac{\lambda}{\mu|1^m}\in\M_n$ (that is, any menader where the lower composition ends with at least $m$ 1s), we define the \emph{$m$-connectivity word} to be a word $s_ms_{m-1}\ldots s_2s_1$ where $s_1=1,$ and for $i>1$, $s_i=s_k$ if the $(n-i+1)$st vertex and $(n-k+1)$st vertex are part of the same path or circle and $s_i = \max\{s_j: j<i\} +1$ otherwise. We use $N_j$ to refer to the $j$-connectivity word corresponding to $\frac{j}{1^j}$ since it will occur often. If $j = 2k$, then $N_{2k} = 1 2\cdots(k-1)kk(k-1)\cdots21$, if $j = 2k+1$ is odd then $N_{2k+1} = 12\cdots k(k+1)k\cdots21$.

For example, if we take $\lambda = 3|2|6|1|3$ and $\mu = 3|5$, then $m=14-8=6$. The $6$-connectivity word of $\frac{3|2|6|1|3}{3|5|1|1|1|1|1|1}$ is 443121.

\begin{theorem} \label{thm: shiftStart}
    Suppose that for $i\in \{1,2\}$, we have $\lambda_i\in\M_{k_i}$, $\mu_i\in\M_{\ell_i}$ with $\ell_i+m=k_i$ for some $m>1$. If $\frac{\lambda_1}{\mu_1|1^m}$ and $\frac{\lambda_2}{\mu_2|1^m}$ have the same $m$-connectivity word, then for any compositions $\alpha, \beta$ with $|\alpha|=|\beta|-m$, we have 
    \[
    \ind\left(\frac{\lambda_1|\alpha}{\mu_1|\beta}\right) =\ind\left(\frac{\lambda_2|\alpha}{\mu_2|\beta}\right) + \epsilon
    \]
    where $\epsilon = \ind\left(\frac{\lambda_1}{\mu_1|1^m}\right)-\ind\left(\frac{\lambda_2}{\mu_2|1^m}\right)$.
\end{theorem}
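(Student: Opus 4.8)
The plan is to show that the two meanders $\frac{\lambda_1|\alpha}{\mu_1|\beta}$ and $\frac{\lambda_2|\alpha}{\mu_2|\beta}$ agree completely on the right-hand portion (the part built from $\alpha$ and $\beta$) and differ only in the left-hand portion, where the difference in index is precisely the constant $\epsilon$. The central observation is that the $m$-connectivity word records exactly how the $m$ trailing nodes of the left block are linked to one another through the arc structure lying entirely over those nodes. Since $\alpha$ and $\beta$ attach to the composite meander only by connecting into these trailing nodes (the lower composition $\mu_i|\beta$ begins its $\beta$-arcs at node $\ell_i+1 = k_i - m + 1$, i.e.\ exactly the first of the $m$ trailing nodes), the way the right portion interacts with the left portion is entirely determined by this connectivity data.

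First I would set up a careful bookkeeping of the nodes. In $\frac{\lambda_i|\alpha}{\mu_i|\beta}$ there are $k_i + |\alpha|$ nodes total; the first $k_i$ nodes carry the arcs from $\lambda_i$ (above) and from $\mu_i$ together with the leading $1$s that would have been the $1^m$ (below), while the last $|\alpha|$ nodes plus the $m$ boundary nodes carry the arcs from $\alpha$ and $\beta$. The key point is that the upper arcs from $\alpha$ live strictly to the right of node $k_i$ (since $\lambda_i$ is a separate block summing to $k_i$), while the lower arcs from $\beta$ span from the trailing $m$ nodes of the left block into the right block; because $|\beta| = |\alpha| + m$, the composition $\beta$ exactly reaches back over those $m$ boundary nodes. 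I would then argue that the graph obtained by restricting to the right portion together with the $m$ boundary nodes is \emph{identical} in the two meanders, since it depends only on $\alpha$, $\beta$, and $m$, not on $i$.

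Next I would analyze the global cycle-and-path count using the connectivity word as the gluing interface. The idea is that contracting the left block of each meander down to its effect on the $m$ boundary nodes yields, in both cases, the same ``external'' identification pattern on those $m$ nodes (this is precisely what equality of $m$-connectivity words encodes). Consequently, when we glue on the identical right portion built from $\alpha,\beta$, the resulting merging of paths and cycles happens in the same combinatorial manner for $i=1$ and $i=2$. Formally, I would compute $\ind(\frac{\lambda_i|\alpha}{\mu_i|\beta})$ by starting from the disjoint union of the left meander $\frac{\lambda_i}{\mu_i|1^m}$ and the right-portion graph, and then accounting for how adding the $\beta$-arcs that cross the boundary merges components; since the connectivity words agree, the change in the number of components (and whether merges create cycles) is the same difference in both cases. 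This gives $\ind(\frac{\lambda_1|\alpha}{\mu_1|\beta}) - \ind(\frac{\lambda_2|\alpha}{\mu_2|\beta}) = \ind(\frac{\lambda_1}{\mu_1|1^m}) - \ind(\frac{\lambda_2}{\mu_2|1^m}) = \epsilon$, which is the claim.

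The main obstacle will be making the gluing argument fully rigorous at the level of the index formula $\ind = 2c + p$, rather than at the level of ``the pictures look the same.'' Specifically, when the $\beta$-arcs reconnect the $m$ boundary nodes to the right portion, several components of the left meander may merge, and a merge can either join two paths into one path, close a path into a cycle, or absorb the right portion; tracking exactly how $c$ and $p$ change requires knowing the incidence pattern of the boundary nodes, which is where the connectivity word enters. I would handle this by showing that the increment $\Delta(2c+p)$ contributed by adding the right portion is a function only of the connectivity word and of $(\alpha,\beta)$ — a quantity common to both meanders — so that it cancels in the difference. I expect the cleanest route is to express the index as a rank computation (number of nodes minus rank of the incidence structure, adjusted for cycles) and observe that equality of connectivity words means the two left blocks induce the same quotient on the boundary, making the difference telescope exactly to $\epsilon$.
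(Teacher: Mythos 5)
Your proposal is correct and follows essentially the same route as the paper: you reduce to showing that the increment in index caused by appending $\alpha$ and $\beta$ is identical for $i=1,2$, and you justify this by observing that the new arcs attach only to the $m$ boundary nodes, whose mutual connectivity (hence the pattern of path-merges and cycle-creations) is exactly what the $m$-connectivity word records. Your write-up is in fact more detailed than the paper's brief argument, and your suggested formalization via $\ind = 2c+p$ with the component-merging bookkeeping is a reasonable way to make that argument fully rigorous.
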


\begin{proof}
    It is enough to show that 
    \[\ind\left(\frac{\lambda_1|\alpha}{\mu_1|\beta}\right) - \ind\left(\frac{\lambda_1}{\mu_1|1^m}\right)=\ind\left(\frac{\lambda_2|\alpha}{\mu_2|\beta}\right) -\ind\left(\frac{\lambda_2}{\mu_2|1^m}\right), \]
    i.e., that the change in index by appending $\alpha$ to $\lambda_i$ and $\beta$ to $\mu_i$ is the same for each $i\in\{1,2\}$. However, this follows from the fact that $\frac{\lambda_1}{\mu_1|1^m}$ and $\frac{\lambda_2}{\mu_2|1^m}$ have the same $m$-connectivity word. Indeed, since the addition of $\alpha$ and $\beta$ will create path, circles, or lengthen existing paths in the first meander if and only if it does in the second meander, the result must follow.
\end{proof}

We will often use this result to produce recursions by shortening the start of a meander, we now consider an example.

\begin{example}
    Consider the case where $\lambda_1 = 4|3$, $\mu_1 = 3|3$, $\lambda_2 = 4$ and $\mu_2 = 3$. Notice both $\frac{4|3}{3|3|1}$ and $\frac{4}{3|1}$ have the same $1$-connectivity word. The index of $\frac{4|3}{3|3|1}$ is $2$ and the index of $\frac{4}{3|1}$ is $1$. Theorem \ref{thm: shiftStart} implies that the number of meanders of length $n$ and index $k$ beginning with $\frac{4|3}{3|3}$ is equal to the number of meanders of length $n-3$ and index $k-1$ beginning with $\frac{4}{3}$. So meanders of length $n$ and index $k$ that look like
    \[
    \frac{4|3|\alpha}{3|3|\beta}
    \]
    are in bijection with meanders of length $n-3$ and index $k-1$ that look like
    \[
    \frac{4|\alpha}{3|\beta}
    \]
    If we consider the meanders,
    \begin{center}
\begin{tikzpicture}[
    dot/.style={circle, fill, inner sep=1.5pt},
    arc/.style={line width=0.6mm}
]

    \draw[arc, blue] (0,0) to[bend left=45] (3,0);
    \draw[arc, blue] (1,0) to[bend left=45] (2,0);
     \draw[arc, blue] (4,0) to[bend left=45] (6,0);
\draw[arc, blue] (0,0) to[bend right=45] (2,0);
    \draw[arc, blue] (3,0) to[bend right=45] (5,0);
    \draw[arc, red,dashed] (6,0) to[bend right=45] (7,-1/3);

     \node[dot] at (0,0) {};
    \node[dot] at (1,0) {};
    \node[dot] at (2,0) {};
    \node[dot] at (3,0) {};
    \node[dot] at (4,0) {};
    \node[dot] at (5,0) {};
    \node[dot] at (6,0) {};

    \draw[arc, blue] (9,0) to[bend left=45] (12,0);
    \draw[arc, blue] (10,0) to[bend left=45] (11,0);
   \draw[arc, blue] (9,0) to[bend right=45] (11,0);
    \draw[arc, red,dashed] (12,0) to[bend right=45] (13,-1/3);

        \node[dot] at (9,0) {};
    \node[dot] at (10,0) {};
     \node[dot] at (11,0) {};
    \node[dot] at (12,0) {};

\end{tikzpicture}
\end{center}
We shorten the left meander by replacing it with the right meander. The first part of $\beta$ is the red dashed line. Notice we've ``lost'' one path by doing this replacement, thus reducing the index by~1.
\end{example}

\begin{remark}
    Theorem \ref{thm: shiftStart} is sometimes equivalent to the notion of ``winding down'' from \cite{CHMW15}, but not always. If you wound down the meander in the previous example, you would end up with $\frac{1|3}{3|1}$ instead of $\frac{4}{3|1}$. The meander $\frac{1|3}{3|1}$ also has the same $1$-connectivity word and index as $\frac{4|3}{3|3|1}$, so our result does allow these winding down moves. The deterministic nature of winding down is useful for encoding a meander and its index, but we needed the flexibility of Theorem \ref{thm: shiftStart} to establish many of the recursions later in this paper.
\end{remark}


\section{Index of meanders on acyclic sets $\{1,j\}$}\label{sec: index}

We say that a meander is {\em acyclic} if it does not contain any cycles.  For a finite index set $J$, we say that the set $J$ is \emph{acyclic} if every meander in $\mathcal{I}_n^J$ is acyclic for all $n > \max(J)$. In this section, we will find a general recursion for acyclic meanders of the form $J = \{1,j\}$.

\begin{proposition} \label{prop: 1jAcyclic}
   The set $\{1,j\}$ is acyclic if and only if $j \in \{2,3,4,5,7\}$.
\end{proposition}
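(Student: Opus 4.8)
The plan is to translate acyclicity into the combinatorics of the two non-crossing arc systems and to isolate the precise span inequality that separates the good values $\{2,3,4,5,7\}$ from the rest. Recall that within a single part of size $j$ occupying positions $[a,a+j-1]$ the arcs are concentric, pairing $a+i$ with $a+j-1-i$, so their spans are exactly $j-1,j-3,\dots$ (all $\equiv j-1\pmod 2$); arcs from two different parts on the same side occupy disjoint intervals, and two same-side arcs can share a block only if they are concentric (equal center). A cycle is a closed walk alternating top and bottom arcs, so every cycle node has both a top and a bottom arc and the cycle has even length $2\ell$. I would first record two reductions. A $2$-cycle between $\{p,q\}$ forces the top and bottom blocks through $p,q$ to have the same center, hence to be the \emph{same} interval $[t,t+j-1]$; since $n>j$ this interval is proper, so one of $t-1,\ t+j-1$ is a common partial sum different from $n$, and the meander is decomposable. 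A $4$-cycle on $p_1<p_2<p_3<p_4$ must use one side as the side-by-side matching $\{p_1p_2,p_3p_4\}$ (the crossing matching is impossible for non-crossing arcs) and the other as the nested matching $\{p_1p_4,p_2p_3\}$; writing $g=p_2-p_1=p_4-p_3$ from concentricity and using that the two same-side blocks are disjoint length-$j$ intervals with starts $t_1,t_2$ satisfying $t_2\ge t_1+j$, one gets $p_4-p_1=(t_2-t_1)+g\ge j+1$, contradicting the span bound $p_4-p_1\le j-1$. Thus every cycle has length at least $6$.

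For the \textbf{negative direction} ($j=6$ or $j\ge 8$) I would exhibit an explicit indecomposable cyclic meander realizing a $6$-cycle. Label the six cycle nodes $q_1<\dots<q_6$, take the top arcs to be $\{q_1q_2,\ q_3q_6,\ q_4q_5\}$ and the bottom arcs $\{q_1q_4,\ q_2q_3,\ q_5q_6\}$ (their union is a single $6$-cycle $q_1q_2q_3q_6q_5q_4$). Concentricity of the nested pairs gives $q_3+q_6=q_4+q_5$ and $q_1+q_4=q_2+q_3$, and with $s_1=q_2-q_1,\ s_3=q_5-q_4,\ u_2=q_3-q_2$ the remaining spans become $s_2=q_6-q_3=2s_1+s_3$, $u_1=q_4-q_1=2s_1+u_2$, $u_3=q_6-q_5=s_1$. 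The only constraints are that all spans are $\le j-1$ and $\equiv j-1\pmod 2$, and that the two same-side blocks be disjoint; the latter, read off from the block centers, is $3s_1+s_3+2u_2\ge 2j$ and $3s_1+2s_3+u_2\ge 2j$. A short optimization (saturating $s_3=u_2=j-1-2s_1$) shows such spans exist exactly when $j\ge 3\delta+3$, where $\delta=1$ for even $j$ and $\delta=2$ for odd $j$, i.e. for even $j\ge 6$ and odd $j\ge 9$, which is precisely the bad set $\{6,8,9,10,\dots\}$. Placing the four blocks at the centers these arcs force and filling the gaps with parts of size $1$ gives a concrete meander (for $j=8$ this yields $\tfrac{8|1|8|1^3}{1^3|8|1|8}$, with $6$-cycle $4\!-\!5\!-\!10\!-\!17\!-\!16\!-\!11$); indecomposability is then verified by $PS(\lambda)\cap PS(\mu)=\{n\}$.

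For the \textbf{positive direction} ($j\in\{2,3,4,5,7\}$) I would show that no meander in $\mathcal I_n^{\{1,j\}}$ with $n>j$ contains a cycle of any length, by proving that the existence of \emph{any} cycle forces $j\ge 3\delta+3$, so that the span inequality above is the universal obstruction. Concretely, in an arbitrary cycle I would locate an extremal concentric same-side pair (e.g. the innermost nested pair on the side carrying the widest arc) and rerun the disjointness-versus-span computation that produced $3s_1+\cdots\ge 2j$ in the $6$-cycle case, using the minimal available span $\delta$; the fact that $\delta=2$ precisely when $j$ is odd is exactly the feature that lets $7$ survive while $6$ and $8$ fail.

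The step I expect to be the main obstacle is this last one: a general cycle can be long and have a complicated nesting pattern, so reducing it to the single span inequality is not automatic. I would attack it either by a ``smallest cycle'' argument showing a minimal cycle may always be taken to be a $6$-cycle of the above type, or, for odd $j$, by exploiting that every arc then has even span, so the entire cycle lives in one parity class of positions, which tightens the counting enough to eliminate $j=3,5,7$. If a clean uniform argument resists, the reductions of Theorem~\ref{thm: shiftStart} should give an a priori size bound on a minimal cyclic meander, after which the finitely many cases for each $j\in\{2,3,4,5,7\}$ can be checked directly.
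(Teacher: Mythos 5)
Your negative direction is sound: the explicit $6$-cycle construction (e.g.\ $\tfrac{8|1|8|1^3}{1^3|8|1|8}$ for $j=8$) matches the paper's family $\tfrac{n|1^{k-3}|n|1^{k-1}}{1^{k-1}|n|1^{k-3}|n}$ and its odd analogue, and your exclusion of $2$-cycles (forcing coincident blocks, hence decomposability) and $4$-cycles (the span bound $p_4-p_1\le j-1$ against the block-disjointness bound $p_4-p_1\ge j+1$) is correct and in fact more explicit than what the paper writes for $j\in\{2,3\}$.

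The genuine gap is the positive direction, which is the heart of the proposition, and you acknowledge it yourself. Your span inequality $j\ge 3\delta+3$ is derived only for one specific $6$-cycle shape, namely top $=\{q_1q_2,\,q_3q_6,\,q_4q_5\}$ against bottom $=\{q_1q_4,\,q_2q_3,\,q_5q_6\}$. But other single-$6$-cycle configurations of two noncrossing matchings exist (for instance a fully nested triple $\{q_1q_6,q_2q_5,q_3q_4\}$ on one side against $\{q_1q_4,q_2q_3,q_5q_6\}$ on the other), and cycles of length $8,10,\dots$ are not addressed at all. The claim that a minimal cycle ``may always be taken to be a $6$-cycle of the above type'' is exactly what needs proof and is not obviously true; the parity observation for odd $j$ and the appeal to Theorem~\ref{thm: shiftStart} for a size bound are proposed strategies, not arguments. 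The paper closes this direction differently: for each $j\in\{4,5,7\}$ it observes that a cycle must pass through an inner (non-outermost) arc of some $j$-block --- the outermost arc being excluded by indecomposability, which also disposes of $j\in\{2,3\}$ --- and then propagates the arcs forced through that inner arc's endpoints to show the putative cycle can never be capped. That forced-propagation case check (or an equivalent finite verification) is what your writeup still needs before the ``if'' half of the statement is established.
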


\begin{proof}
To prove that irreducible meanders on $\{1,j\}$ are all acyclic if $j\in\{2,3,4,5,7\}$, we proceed by cases. 
In the case when $j=2$, clearly a cycle can only appear if the arc from an occurrence of 2 lines on the upper half of the meander lines up with another occurrence of 2 on the lower half of the meander. However, this contradicts that the meander was indecomposable. An identical argument works for when $j=3$. For the remaining cases, we will exclude the possibility of the cycle originating from the largest arc of an occurrence of $j$ since this would contradict indecomposability in a similar way.

In the case when $j=4,$ any circle must originate from the smaller inner arc of an occurrence of 4. However, 
as we can see in the image below, it is impossible to ``cap'' the other end of that potential cycle, so a cycle is not possible. Similarly with $j=5,$ also pictured below.

\begin{center}
\begin{tikzpicture}[
    dot/.style={circle, fill, inner sep=1.5pt},
    arc/.style={line width=0.6mm}
]

    \draw[arc, blue] (0,0) to[bend left=45] (3,0);
    \draw[arc, blue] (1,0) to[bend left=45] (2,0);
\draw[arc, blue] (1,0) to[bend right=45] (4,0);
    \draw[arc, blue] (2,0) to[bend right=45] (3,0);

     \node[dot] at (0,0) {};
    \node[dot] at (1,0) {};
    \node[dot] at (2,0) {};
    \node[dot] at (3,0) {};
    \node[dot] at (4,0) {};

    \draw[arc, red] (8,0) to[bend left=45] (12,0);
    \draw[arc, blue] (9,0) to[bend left=45] (11,0);
   \draw[arc, blue] (9,0) to[bend right=45] (13,0);
    \draw[arc, red] (10,0) to[bend right=45] (12,0);

        \node[dot] at (8,0) {};
    \node[dot] at (9,0) {};
     \node[dot] at (10,0) {};
    \node[dot] at (11,0) {};
    \node[dot] at (12,0) {};
      \node[dot] at (13,0) {};

\end{tikzpicture}
\end{center}

Finally, let us consider the case when $j=7.$ In this case, there are two inner arcs that could initiate the start of a cycle. However, as we can see below, it is impossible to cap these potential cycles. 

\begin{center}
\begin{tikzpicture}[ scale =.8,
    dot/.style={circle, fill, inner sep=1.5pt},
    arc/.style={line width=0.6mm}
]

    \draw[arc, red] (0,0) to[bend left=45] (6,0);
    \draw[arc, blue] (1,0) to[bend left=45] (5,0);
    \draw[arc, red] (2,0) to[bend left=45] (4,0);
\draw[arc, blue] (1,0) to[bend right=45] (7,0);
    \draw[arc, red] (2,0) to[bend right=45] (6,0);
    \draw[arc, blue] (3,0) to[bend right=45] (5,0);

    \node[dot] at (0,0) {};
    \node[dot] at (1,0) {};
    \node[dot] at (2,0) {};
    \node[dot] at (3,0) {};
    \node[dot] at (4,0) {};
    \node[dot] at (5,0) {};
    \node[dot] at (6,0) {};
    \node[dot] at (7,0) {};

    \draw[arc, blue] (10,0) to[bend left=45] (16,0);
    \draw[arc, red] (11,0) to[bend left=45] (15,0);
    \draw[arc, blue] (12,0) to[bend left=45] (14,0);
   \draw[arc, blue] (12,0) to[bend right=45] (18,0);
    \draw[arc, green] (13,0) to[bend right=45] (17,0);
    \draw[arc, blue] (14,0) to[bend right=45] (16,0);

    \node[dot] at (10,0) {};
    \node[dot] at (11,0) {};
     \node[dot] at (12,0) {};
    \node[dot] at (13,0) {};
    \node[dot] at (14,0) {};
      \node[dot] at (15,0) {};
      \node[dot] at (16,0) {};
      \node[dot] at (17,0) {};
      \node[dot] at (18,0) {};

\end{tikzpicture}
\end{center}
Now, let us see that in every other case, we do have examples of indecomposable meanders with cycles.
If $n=2k$ is even with $n\geq 6,$ then the meander $\frac{n|1^{k-3} | n | 1^{k-1}}{1^{k-1}|n|1^{k-3}|n}$ contains a cycle. If $n=2k+1$ is odd with $n\geq 9,$ then the meander $\frac{n|1^{k-4} | n | 1^{k-1}}{1^{k-1}|n|1^{k-4}|n}$ contains a cycle.
As an example, the case when $j=6$ is shown below.
\begin{center}
\begin{tikzpicture}[scale=.8,
    dot/.style={circle, fill, inner sep=1.5pt},
    arc/.style={line width=0.6mm}
]

    \draw[arc, red] (0,0) to[bend left=45] (5,0);
    \draw[arc, red] (1,0) to[bend left=45] (4,0);
      \draw[arc, blue] (2,0) to[bend left=45] (3,0);
\draw[arc, blue] (2,0) to[bend right=45] (7,0);
    \draw[arc, blue] (3,0) to[bend right=45] (6,0);
     \draw[arc, red] (4,0) to[bend right=45] (5,0);
    \draw[arc, blue] (6,0) to[bend left=45] (11,0);
    \draw[arc, blue] (7,0) to[bend left=45] (10,0);
      \draw[arc, green] (8,0) to[bend left=45] (9,0);
\draw[arc, green] (8,0) to[bend right=45] (13,0);
    \draw[arc, green] (9,0) to[bend right=45] (12,0);
     \draw[arc, blue] (10,0) to[bend right=45] (11,0);

     \node[dot] at (0,0) {};
    \node[dot] at (1,0) {};
    \node[dot] at (2,0) {};
    \node[dot] at (3,0) {};
    \node[dot] at (4,0) {};
     \node[dot] at (5,0) {};
      \node[dot] at (6,0) {};
       \node[dot] at (7,0) {};
       \node[dot] at (8,0) {};
       \node[dot] at (9,0) {};
       \node[dot] at (10,0) {};
       \node[dot] at (11,0) {};
        \node[dot] at (12,0) {};
         \node[dot] at (13,0) {};

\end{tikzpicture}
\end{center}
\end{proof}

We will enumerate meanders of length $n$ with index $k$ where the compositions in the meander have parts from an acyclic set $\{1,j\}$, that is, when $j\in\{2,3,4,5,7\}$. Let us first consider the simplest case, when $J = \{1,2\}$.

\begin{theorem} \label{thm: 12Indec}
    We have
    \[
     A_{\{1,2\}}(x,y) = \frac{1-x}{(1-xy-x^2y^2)(1-x) - 2x^2y}
    \]
    and for any integer $c \geq 1$, $A_{\{c,2c\}}(x,y) = A_{\{1,2\}}(x^c,y^c)$.
\end{theorem}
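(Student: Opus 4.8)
The plan is to treat the two assertions in turn, deriving the explicit formula first and then the inflation identity. By Proposition~\ref{prop: ItoA} we have $A_{\{1,2\}}(x,y) = 1/(1 - I_{\{1,2\}}(x,y))$, so the whole problem reduces to enumerating the indecomposable meanders with parts in $\{1,2\}$ by length and index, i.e.\ to computing $I_{\{1,2\}}(x,y)$. Everything then follows by inverting a power series.

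The key structural step is to understand indecomposability when all parts lie in $\{1,2\}$. I would first observe that every arc joins two \emph{adjacent} nodes: a part $2$ on positions $\{p,p+1\}$ contributes one arc across the gap between $p$ and $p+1$, and a part $1$ contributes nothing. Hence the gap between nodes $i$ and $i+1$ is crossed by an upper arc exactly when $i\notin PS(\lambda)$, and by a lower arc exactly when $i\notin PS(\mu)$. Combining this with the indecomposability criterion $PS(\lambda)\cap PS(\mu)=\{n\}$ (established in the proof that $|\mathcal I_n|=3^{n-1}$), a meander on $\{1,2\}$ is indecomposable precisely when every internal gap is crossed by at least one arc. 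Since all edges join consecutive nodes, any route connecting a node $\le i$ to a node $\ge i+1$ must step across the gap at $i$; therefore an uncrossed gap genuinely separates the graph, the connected components are the maximal runs of consecutive nodes between uncrossed gaps, and \emph{indecomposable is equivalent to connected} for this part set. I expect this equivalence, together with isolating the single cyclic case below, to be the main obstacle; after it the computation is routine.

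With connectivity in hand I would classify the indecomposable meanders. Each node lies in one upper and one lower block, so it carries at most one upper and at most one lower arc; the meander graph thus has maximum degree $2$ and is a disjoint union of paths and cycles. Encode an indecomposable meander by labelling each gap $i$ with the nonempty subset of $\{U,L\}$ recording which arcs cross it; the degree constraint says no two consecutive gaps both contain $U$, and no two both contain $L$. A gap labelled $UL$ saturates both of its endpoints, forcing $n=2$ and giving the unique cyclic meander $\tfrac{2}{2}$ of index $2$. For $n\ge 3$ every gap is a singleton $U$ or $L$, and the no-two-consecutive condition forces the labels to alternate, leaving exactly the two alternating labellings, each a single path of index $1$; the cases $n=1,2$ are checked by hand. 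This gives $i^{\{1,2\}}_{1,1}=1$, $i^{\{1,2\}}_{2,1}=2$, $i^{\{1,2\}}_{2,2}=1$, and $i^{\{1,2\}}_{n,1}=2$ for $n\ge 3$, so
\[
I_{\{1,2\}}(x,y)=xy+2x^2y+x^2y^2+\sum_{n\ge 3}2x^ny=\frac{xy(1+x)}{1-x}+x^2y^2 .
\]
A short computation over the common denominator $1-x$ gives $1-I_{\{1,2\}}=\bigl((1-xy-x^2y^2)(1-x)-2x^2y\bigr)/(1-x)$, and taking the reciprocal yields the claimed expression for $A_{\{1,2\}}(x,y)$.

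For the scaling identity I would use inflation. The map $\tfrac{\lambda}{\mu}\mapsto\tfrac{c\lambda}{c\mu}$ carries meanders with parts in $\{1,2\}$ to meanders with parts in $\{c,2c\}$, and it is a bijection $\mathcal M^{\{1,2\}}_n\to\mathcal M^{\{c,2c\}}_{cn}$, since a meander is determined by its pair of compositions and dividing every part by $c$ is a well-defined inverse (every part of a meander on $\{c,2c\}$ is a multiple of $c$). By Proposition~\ref{prop: scaleIndex} inflation multiplies the index by $c$, so a meander contributing $x^ny^k$ maps to one contributing $x^{cn}y^{ck}$. Summing over all such meanders,
\[
A_{\{c,2c\}}(x,y)=\sum_{n,k\ge 0}a^{\{1,2\}}_{n,k}\,x^{cn}y^{ck}=A_{\{1,2\}}(x^c,y^c),
\]
which completes the proof.
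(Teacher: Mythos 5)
Your proposal is correct and follows essentially the same route as the paper: classify the indecomposable meanders on $\{1,2\}$ (the single cycle $\tfrac{2}{2}$ plus the two alternating paths for each $n\ge 2$), assemble $I_{\{1,2\}}(x,y)=xy+x^2y^2+\tfrac{2x^2y}{1-x}$, and invoke Proposition~\ref{prop: ItoA} and Proposition~\ref{prop: scaleIndex}. The only difference is that your gap-labelling argument carefully justifies the classification that the paper simply asserts, which is a welcome addition rather than a deviation.
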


\begin{proof}
First note that for $n>2,$ there are only two irreducible meanders, namely $\frac{1|2|1|2|\cdots}{2|1|2|1|\cdots}$ and $\frac{2|1|2|1|\cdots}{1|2|1|2\cdots}$. Furthermore each of these has index 1. Taking into consideration the irreducible meanders of size $n\leq 2,$ we thus have
\[
I_{\{1,2\}}(x,y) = xy +x^2y^2 + \frac{2x^2y}{1-x}.
\]
Using Proposition \ref{prop: ItoA}, we know $A_{\{1,2\}}(x,y)=\frac{1}{1-I_{\{1,2\}}(x,y)},$ from which the statement of the theorem follows. The statement regarding $A_{\{c,2c\}}(x,y)$ follows directly from Proposition \ref{prop: scaleIndex}. 
\end{proof}


To enumerate meanders with a given index whose elements come from the acyclic set $\{1,j\}$ for $j>2,$ we will utilize the following lemma. We first define the following notation when $n\geq j$:
    \begin{itemize}
        \item let $b_{n,k}^\ell(\{1,j\})$ be the number of $\frac{\lambda}{\mu}\in\mathcal{I}^{\{1,j\}}_{n,k}$ of the form $\frac{j|\alpha}{1^\ell| \beta}$ or $\frac{1^\ell|\beta}{j| \alpha}$ for some compositions $\alpha\vdash n-j$ and $\beta\vdash n-\ell$, and 
        \item let $c_{n,k}^\ell(\{1,j\})$ be the number of $\frac{\lambda}{\mu}\in\mathcal{I}^{\{1,j\}}_{n,k}$ of the form $\frac{j|\alpha}{1^\ell|j| \beta}$ or $\frac{1^\ell|j|\beta}{j| \alpha}$ for some compositions $\alpha \vdash n-j$ and $\beta\vdash n-j-\ell$.
    \end{itemize}
This lemma establishes recursive formulas for $i^{\{1,j\}}_{n,k}$ in terms of $b_{n,k}^\ell(\{1,j\})$ and $c_{n,k}^\ell(\{1,j\})$ in the case where $\{1,j\}$ is acyclic. We note that removing the index $k$, these recursive formulas hold more generally for $i^{\{1,j\}}_{n}$ for any $j$.

\begin{lemma} \label{lem: acyclicRecursive}
    Let $k\geq 1,$ $j\geq 2,$ and $n \geq 2j + \lfloor j/2 \rfloor$. Additionally, let $b_{n,k}^\ell:=b_{n,k}^\ell(\{1,j\})$ and $c_{n,k}^\ell:=c_{n,k}^\ell(\{1,j\})$.
    If $\{1,j\}$ is acyclic and $\delta = j \pmod 2$, then the following recursions hold:
    \begin{enumerate}
        \item $i^{\{1,j\}}_{n,k} = b^1_{n,k} =\sum_{\ell=1}^{j-1} c^\ell_{n,k}$.
        \item For $\ell > 1$, $b^\ell_{n,k} = b^{\ell-1}_{n,k} - c^{\ell-1}_{n,k}$
        \item For $\ell \geq \lfloor j/2 \rfloor$, $c^\ell_{n,k} = b^{j-\ell}_{n-\ell,k-\ell + \lfloor j/2 \rfloor}$
        \item For $\ell < \lfloor j/2 \rfloor$, $c^\ell_{n,k} = \sum_{i=1}^\ell b^i_{n-j-\ell+i,k-\ell+i-\delta}$.

    \end{enumerate}
    Disregarding the index, the recursions hold for any set $\{1,j\}$.
\end{lemma}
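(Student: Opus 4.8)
The plan is to build all four recursions from a single prefix-exchange reduction, proved with Theorem~\ref{thm: shiftStart}, after disposing of the two index-preserving identities by direct classification.

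First, parts (1) and (2) are pure bookkeeping on the first parts of $\lambda$ and $\mu$, and they change neither $n$ nor $k$ because no meander is altered. Using the criterion $PS(\lambda)\cap PS(\mu)=\{n\}$ for indecomposability, an indecomposable $\frac\lambda\mu\in\mathcal I_{n}^{\{1,j\}}$ with $n>1$ can have neither $\lambda_1=\mu_1=1$ nor $\lambda_1=\mu_1=j$; hence exactly one side begins with $j$ and the other begins with a positive number $\ell\le j-1$ of $1$'s followed by a $j$. Sorting by that $\ell$ gives $i^{\{1,j\}}_{n,k}=b^1_{n,k}=\sum_{\ell=1}^{j-1}c^\ell_{n,k}$, which is (1), and splitting the meanders counted by $b^{\ell-1}_{n,k}$ according to whether the $\ell$-th bottom part is $1$ or $j$ gives $b^{\ell-1}_{n,k}=b^\ell_{n,k}+c^{\ell-1}_{n,k}$, which is (2). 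Iterating (2) together with the boundary identity $b^{j-1}_{n,k}=c^{j-1}_{n,k}$ (a meander with the maximal $j-1$ leading $1$'s must continue with a $j$) yields $b^{m}_{n,k}=\sum_{i=m}^{j-1}c^i_{n,k}$, which I will need for part (4).

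The heart of the argument is the reduction behind (3). For $1\le\ell\le j-1$ I exchange the prefix $\frac{1^\ell|j}{j}$ of a meander counted by $c^\ell$ (in its $\frac{1^\ell|j|\beta}{j|\alpha}$ form) for the prefix $\frac{j}{1^{j-\ell}}$, leaving the appended compositions untouched; this sends $\frac{1^\ell|j|\beta}{j|\alpha}$ to $\frac{j|\beta}{1^{j-\ell}|\alpha}$, a meander of the type counted by $b^{j-\ell}$ at length $n-\ell$. That this is a bijection of \emph{indecomposable} meanders follows from the partial-sum criterion: both $\frac{1^\ell|j|\beta}{j|\alpha}$ and $\frac{j|\beta}{1^{j-\ell}|\alpha}$ are decomposable precisely when some partial sum of $\alpha$ exceeds some partial sum of $\beta$ (or $0$) by exactly $\ell$, so indecomposability is preserved in both directions. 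For the index I apply Theorem~\ref{thm: shiftStart} with $m=\ell$: a short computation shows $\frac{1^\ell|j}{j|1^\ell}$ and $\frac{j}{1^j}$ share the same $\ell$-connectivity word, so appending $\alpha,\beta$ changes the index of both by the same amount and the net shift is $\epsilon=\ind(\tfrac{1^\ell|j}{j|1^\ell})-\ind(\tfrac{j}{1^j})$. Here $\ind(\tfrac{j}{1^j})=\lceil j/2\rceil$ (the top $j$-block makes $\lfloor j/2\rfloor$ two-node paths, plus one isolated node when $j$ is odd), and because $\{1,j\}$ is acyclic and $\frac{1^\ell|j}{j|1^\ell}$ is indecomposable of size $\ell+j>\max\{1,j\}$, its index equals its number of paths; composing the two nested-arc reflections gives translation by $2\ell$, and tracing the resulting orbits shows there are exactly $\ell+\delta$ paths, so $\ind(\tfrac{1^\ell|j}{j|1^\ell})=\ell+\delta$ and $\epsilon=\ell-\lfloor j/2\rfloor$. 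Thus $c^\ell_{n,k}=b^{j-\ell}_{n-\ell,\,k-\ell+\lfloor j/2\rfloor}$ for every $\ell$, and restricting to $\ell\ge\lfloor j/2\rfloor$ is exactly (3).

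Finally, (4) is obtained by feeding this reduction back into itself. For $\ell<\lfloor j/2\rfloor$ I first write $c^\ell_{n,k}=b^{j-\ell}_{n-\ell,\,k-\ell+\lfloor j/2\rfloor}$, then expand $b^{j-\ell}=\sum_{i=j-\ell}^{j-1}c^i$ by the identity from the first step, and then apply (3) to each $c^i$ — legitimate since every such $i$ satisfies $i\ge j-\ell>\lceil j/2\rceil\ge\lfloor j/2\rfloor$. Reindexing by $i\mapsto j-i$ and using $2\lfloor j/2\rfloor-j=-\delta$ collapses the double shift into $c^\ell_{n,k}=\sum_{i=1}^{\ell}b^{i}_{n-j-\ell+i,\,k-\ell+i-\delta}$, which is (4); the hypothesis $n\ge 2j+\lfloor j/2\rfloor$ is exactly what keeps every intermediate meander long enough for the prefix exchange and the classification of leading parts to stay valid. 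Dropping all index subscripts, the same three steps go through verbatim and never invoke acyclicity, giving the final index-free claim for arbitrary $\{1,j\}$. The main obstacle is the index computation $\ind(\tfrac{1^\ell|j}{j|1^\ell})=\ell+\delta$ together with the matching connectivity word that licenses Theorem~\ref{thm: shiftStart}: every index shift in (3) and (4) is pinned to this orbit count, and it is precisely the place where acyclicity is forced (a cycle would contribute $2c$ and destroy the uniform shift), so verifying the orbit count and the connectivity-word identity is where the real work lies.
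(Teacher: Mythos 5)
Your treatment of parts (1)--(3) is essentially the paper's: the same classification of leading parts via the partial-sum criterion gives (1) and (2), and the same prefix exchange $\frac{1^\ell|j|\beta}{j|\alpha}\mapsto\frac{j|\beta}{1^{j-\ell}|\alpha}$, licensed by Theorem~\ref{thm: shiftStart} with $\epsilon=(\ell+\delta)-\lceil j/2\rceil=\ell-\lfloor j/2\rfloor$, gives (3) (your value $\ell+\delta$ for $\ind(\tfrac{1^\ell|j}{j|1^\ell})$ agrees with the paper's $2\lceil j/2\rceil+\ell-j$). Your route to part (4), however, is genuinely different. The paper conditions on the number of $1$s between the first two $j$s of the top composition (forced by indecomposability to be $\ell-i$ with $1\le i\le\ell$) and uses a separate exchange $\frac{j|1^{\ell-i}|j}{1^\ell|j|1^{j-i}}\rightsquigarrow\frac{j}{1^j}$ for each $i$; you instead note that your single exchange is a bijection preserving indecomposability for \emph{every} $1\le\ell\le j-1$ (your partial-sum equivalence is correct), so $c^\ell_{n,k}=b^{j-\ell}_{n-\ell,k-\ell+\lfloor j/2\rfloor}$ holds uniformly, and (4) falls out of (1)--(3) by pure reindexing. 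I checked that this reproduces the paper's formula (e.g.\ for $j=7$, $\ell=2$ your $b^5_{n-2,k+1}$ expands to $b^2_{n-7,k-1}+b^1_{n-8,k-2}$, exactly the paper's two terms), and it makes the index-free final claim immediate. One bookkeeping caveat: expanding $b^{j-\ell}$ at length $n-\ell$ and reapplying (3) there needs the classification and exchange to remain valid at the shorter lengths, so the threshold on $n$ has to be rechecked for the iterated step.

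The one place you are too casual is the connectivity-word identity for small $\ell$, which you dismiss as ``a short computation'' and whose dependence on acyclicity you attribute to cycles contributing $2c$. Neither is right as stated: for $j=6$, $\ell=2$ the meander $\frac{1|1|6}{6|1|1}$ decomposes into the two paths $1\text{--}6\text{--}5\text{--}2$ and $7\text{--}4\text{--}3\text{--}8$, so its $2$-connectivity word is $11$, while that of $\frac{6}{1^6}$ is $21$ --- and this failure occurs with no cycle present and with the index still equal to $\ell+\delta$. So for $\ell<\lfloor j/2\rfloor$ the connectivity-word identity is precisely where the hypothesis $j\in\{2,3,4,5,7\}$ is consumed, and it must actually be verified (the relevant pairs are $j=4,\ell=1$; $j=5,\ell=1$; $j=7,\ell\in\{1,2\}$, and it does hold in each). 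Your conclusions are therefore correct, but that verification is the real content of your version of part (4), not a formality. (Minor: Theorem~\ref{thm: shiftStart} is stated for $m>1$ while you, like the paper, need $m=1$; that appears to be a typo in the theorem's hypothesis.)
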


\begin{proof}
Let $n>j$ and let $\frac{\lambda}{\mu}\in\mathcal{I}_{n,k}^{\{1,j\}}.$
Without loss of generality, suppose $\lambda_1>\mu_1$, i.e., that  $\lambda_1=j$ and $\mu_1=1$. (By swapping $\lambda$ and $\mu$, we get the other case.) 

    The first recursive formula follows immediately from definitions since an indecomposable meander could have $\mu$ begin with $1^i$ for $1 \leq i < j$, before the occurrence of the first $j$. The second recursive formula also follows immediately since meanders that have $\mu$ begin with $1^{\ell-1}$  could either have $j$ or $1$ as their next entry in $\mu$. 

    For the third recursion, we will use Theorem \ref{thm: shiftStart}. Essentially, we will start with $\frac{\lambda}{\mu} = \frac{j\alpha}{1^\ell j \beta}$, replace it with $\frac{\lambda'}{\mu'} = \frac{1^{j-\ell}\alpha}{j\beta}$, and track how the index has changed using the theorem. Notice first that for $\ell \geq \lfloor j/2 \rfloor$, the $\ell$-connectivity word of both $\frac{1^\ell j}{j1^\ell}$ and $\frac{j}{1^{j-\ell} 1^{\ell}}$ is $N_{2\ell-j} \ominus ((j-\ell)\cdots 21)$. Furthermore, the index of $\frac{1^\ell j}{j1^\ell}$ is $2 \lceil j/2 \rceil + \ell - j$, and the index of $\frac{j}{1^{j-\ell}1^\ell}$ is $\lceil j/2 \rceil$. Theorem \ref{thm: shiftStart} then implies that number of meanders of length $n$ with index $k$ beginning with $\frac{1^\ell j}{j}$ is equal to the number of meanders of length $n-\ell$ with index \[k - (2 \lceil j/2 \rceil + \ell - j) + \lceil j/2 \rceil = k - \ell + \lfloor j/2 \rfloor\] beginning with $\frac{j}{1^{j-\ell}}$.  

    For the final recursion, we make a similar argument. Notice for $1 \leq i \leq \ell < \lfloor j/2\rfloor$ and $\{1,j\}$ acyclic, the $(j-i)$-connectivity word for $\frac{j1^{\ell-i}j}{1^\ell j1^{j-i}}$ matches the $(j-i)$-connectivity word for $\frac{j}{1^{i}1^{j-i}}$. Indeed, they are both equal to $N_{j-2i} \ominus (i\cdots21)$. And for $\ell < \lfloor j/2 \rfloor$, the index of $\frac{j1^{\ell-i}j}{1^\ell j 1^{j-i}}$ is $\lceil j/2 \rceil + \delta +\ell - i$, and the index of $\frac{j}{1^{i} 1^{j-i}}$ is $\lceil j/2 \rceil$. 

    As a result, by Theorem \ref{thm: shiftStart} the number of meanders of length $n$ with index $k$ of the form $\frac{j1^{\ell-i}j\alpha}{1^\ell j\beta}$ is equal to the number of meanders of length $n-j-\ell+i$ with index $k-\ell+i-\delta$ of the form $\frac{j\alpha}{1^{i}\beta}$, which are precisely enumerated by $b_{n-j-\ell+i,k-\ell+i -\delta}^i$. But $c_{n,k}^\ell$ is the sum of all the meanders of length $n$ and index $k$ of the form $\frac{j1^{\ell-i}j\alpha}{1^\ell j\beta}$ for $1 \leq i \leq \ell$, which therefore equals $\sum_{i=1}^\ell b_{n-j-\ell+i,k-\ell+i -\delta}^i$ as desired.

    If we do not have to track the index, this means the connectivity word and index are irrelevant. So all the reductions based on how the meander start that we make in the previous paragraphs hold generally.
\end{proof}

\begin{remark}
    Notice that only the final recursion relies on the acyclic assumption to ensure that the connectivity words are the same when we make our reduction. All of the others hold generally.
\end{remark}

\begin{example}
    To illustrate how the fourth recursion in Lemma \ref{lem: acyclicRecursive} works consider the following example. Let $j = 7$, $\ell = 2$. Consider a meander that begins as below:

    \begin{center}
    \begin{tikzpicture}[scale=.8,
    dot/.style={circle, fill, inner sep=1.5pt},
    arc/.style={line width=0.6mm}
]

    \draw[arc] (0,0) to[bend left=45] (6,0);
    \draw[arc] (1,0) to[bend left=45] (5,0);
      \draw[arc] (2,0) to[bend left=45] (4,0);
\draw[arc] (2,0) to[bend right=45] (8,0);
    \draw[arc] (3,0) to[bend right=45] (7,0);
     \draw[arc] (4,0) to[bend right=45] (6,0);
    

     \node[dot] at (0,0) {};
    \node[dot] at (1,0) {};
    \node[dot] at (2,0) {};
    \node[dot] at (3,0) {};
    \node[dot] at (4,0) {};
     \node[dot] at (5,0) {};
      \node[dot] at (6,0) {};
       \node[dot] at (7,0) {};
       \node[dot] at (8,0) {};
\end{tikzpicture}
\end{center}
This is an example of a meander enumerated by $c_{n,k}^2$, since it begins with $\frac{7}{1|1|7}$. Since $\ell = 2 < 3 = \lfloor 7/2 \rfloor$ we must use recursion $4$. The idea for this recursion is that an indecomposable meander beginning with $\frac{7}{1|1|7}$ could either have its top composition continue as $\frac{7|7}{1|1|7}$ or $\frac{7|1|7}{1|1|7}$. In the first case, the meander looks like:
  \begin{center}
    \begin{tikzpicture}[scale=.8,
    dot/.style={circle, fill, inner sep=1.5pt},
    arc/.style={line width=0.6mm}
]

    \draw[arc, color = red] (0,0) to[bend left=45] (6,0);
    \draw[arc, color = red] (1,0) to[bend left=45] (5,0);
      \draw[arc, color = red] (2,0) to[bend left=45] (4,0);
       \draw[arc] (7,0) to[bend left=45] (13,0);
    \draw[arc] (8,0) to[bend left=45] (12,0);
      \draw[arc] (9,0) to[bend left=45] (11,0);
\draw[arc, color = red] (2,0) to[bend right=45] (8,0);
    \draw[arc, color = red] (3,0) to[bend right=45] (7,0);
     \draw[arc, color = red] (4,0) to[bend right=45] (6,0);
    

     \node[dot] at (0,0) {};
    \node[dot] at (1,0) {};
    \node[dot] at (2,0) {};
    \node[dot] at (3,0) {};
    \node[dot] at (4,0) {};
     \node[dot] at (5,0) {};
      \node[dot] at (6,0) {};
       \node[dot] at (7,0) {};
       \node[dot] at (8,0) {};
       \node[dot] at (9,0) {};
       \node[dot] at (10,0) {};
       \node[dot] at (11,0) {};
        \node[dot] at (12,0) {};
         \node[dot] at (13,0) {};
\end{tikzpicture}
\end{center}
in which case, we remove the red arcs and can instead enumerate the meanders beginning with $\frac{7}{1|1}$ of length $n-7$ and index $k - 1$, which is $b_{n-7,k-1}^2$. 

In the second case, the meander looks like:
    \begin{center}
    \begin{tikzpicture}[scale=.8,
    dot/.style={circle, fill, inner sep=1.5pt},
    arc/.style={line width=0.6mm}
]

    \draw[arc, color = red] (0,0) to[bend left=45] (6,0);
    \draw[arc, color = red] (1,0) to[bend left=45] (5,0);
      \draw[arc, color = red] (2,0) to[bend left=45] (4,0);
       \draw[arc] (8,0) to[bend left=45] (14,0);
    \draw[arc] (9,0) to[bend left=45] (13,0);
      \draw[arc] (10,0) to[bend left=45] (12,0);
\draw[arc, color = red] (2,0) to[bend right=45] (8,0);
    \draw[arc, color = red] (3,0) to[bend right=45] (7,0);
     \draw[arc, color = red] (4,0) to[bend right=45] (6,0);
    

     \node[dot] at (0,0) {};
    \node[dot] at (1,0) {};
    \node[dot] at (2,0) {};
    \node[dot] at (3,0) {};
    \node[dot] at (4,0) {};
     \node[dot] at (5,0) {};
      \node[dot] at (6,0) {};
       \node[dot] at (7,0) {};
       \node[dot] at (8,0) {};
       \node[dot] at (9,0) {};
       \node[dot] at (10,0) {};
       \node[dot] at (11,0) {};
        \node[dot] at (12,0) {};
         \node[dot] at (13,0) {};
         \node[dot] at (14,0) {};
\end{tikzpicture}
\end{center}
in which case, we remove the red arcs and can instead enumerate the meanders beginning with $\frac{7}{1}$ of length $n-8$ and index $k - 2$, which is $b_{n-8,k-2}^1$. This shows that $c_{n,k}^2 = b_{n-7,k-1}^2 + b_{n-8,k-2}^1$ for $n \geq 17$. The reduction in index relies on the acyclic assumption.

\end{example}

Using Lemma \ref{lem: acyclicRecursive} in combination with Proposition \ref{prop: 1jAcyclic}, we can find recursions and generating functions for $i^{\{1,j\}}_{n,k}$ with $j \in \{3,4,5,7\}$. (Note this lemma could also have been used to prove Theorem~\ref{thm: 12Indec}.) This then allows us to find $A_{\{c,jc\}}(x,y)$ for all $c \geq 1$ and $j \in \{3,4,5,7\}$ by applying Proposition \ref{prop: ItoA} and Proposition \ref{prop: scaleIndex}.

\begin{theorem} \label{thm: 13Indec}
    We have
    \[
        A_{\{1,3\}}(x,y) = \frac{1-x^2y - x^3y}{(1-x^2y - x^3y)(1-xy-x^3y^{3})-2x^3y^2(1+x)}
    \]
    and for any $c \geq 1$, $A_{\{c,3c\}}(x,y) = A_{\{1,3\}}(x^c,y^c)$.
\end{theorem}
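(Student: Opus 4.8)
The plan is to reduce the computation to the generating function $I_{\{1,3\}}(x,y)$ for indecomposable meanders and then invoke Proposition~\ref{prop: ItoA}, which gives $A_{\{1,3\}} = 1/(1-I_{\{1,3\}})$; once $I_{\{1,3\}}$ is known, the stated formula for $A_{\{1,3\}}$ is a one-line simplification. Since $\{1,3\}$ is acyclic by Proposition~\ref{prop: 1jAcyclic}, Lemma~\ref{lem: acyclicRecursive} applies with $j=3$. Here $\lfloor j/2\rfloor = 1$ and $\delta = 1$, so the fourth recursion is vacuous, and writing $i_{n,k}=i^{\{1,3\}}_{n,k}$, $b^\ell_{n,k}=b^\ell_{n,k}(\{1,3\})$, and $c^\ell_{n,k}=c^\ell_{n,k}(\{1,3\})$, the remaining recursions specialize to
\[
i_{n,k} = b^1_{n,k} = c^1_{n,k}+c^2_{n,k}, \qquad b^2_{n,k} = b^1_{n,k}-c^1_{n,k},
\]
\[
c^1_{n,k} = b^2_{n-1,k}, \qquad c^2_{n,k} = b^1_{n-2,k-1},
\]
valid for all sufficiently large $n$.

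Next I would collapse this system. The first two relations give $b^2_{n,k}=c^2_{n,k}$, hence $c^1_{n,k}=b^2_{n-1,k}=c^2_{n-1,k}=b^1_{n-3,k-1}$, while $c^2_{n,k}=b^1_{n-2,k-1}$. Substituting into $b^1=c^1+c^2$ reduces everything to the single recursion
\[
i_{n,k} = i_{n-2,k-1} + i_{n-3,k-1},
\]
valid past the threshold of Lemma~\ref{lem: acyclicRecursive}. At the level of generating functions this says exactly $I_{\{1,3\}}\cdot(1-x^2y-x^3y) = N(x,y)$ for a correction polynomial $N(x,y)$ supported on the finitely many $(n,k)$ for which the recursion is not yet in force; this is precisely the source of the factor $1-x^2y-x^3y$ appearing (after clearing denominators in Proposition~\ref{prop: ItoA}) in the claimed answer.

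The main obstacle is determining $N(x,y)$. Because the bulk recursion is homogeneous, all of the information in $I_{\{1,3\}}$ is carried by the boundary data, so I would directly enumerate the indecomposable $\{1,3\}$-meanders and their indices for the small values $n\le 6$ below the threshold, reading the index off each meander via $\ind = 2c+p$ (as in the sample computations for $\frac{3}{1|1|1}$, $\frac{3|1}{1|3}$, and $\frac33$). One must be careful to subtract off the $(n,k)$ at which a naive application of $i_{n,k}=i_{n-2,k-1}+i_{n-3,k-1}$ overcounts: for instance the recursion predicts $i_{5,4}=i_{3,3}+i_{2,3}=1$ and $i_{6,4}=i_{4,3}+i_{3,3}=1$, whereas both counts are in fact $0$, producing the subtractive contributions $-x^5y^4-x^6y^4$. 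Assembling the boundary data yields $N(x,y)=xy+x^3y^2+x^4y^2+x^3y^3-x^5y^4-x^6y^4$, and I would confirm the first several coefficients of $I_{\{1,3\}}=N/(1-x^2y-x^3y)$ against the table before substituting into $A_{\{1,3\}}=1/(1-I_{\{1,3\}})$ and simplifying to the claimed rational function.

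Finally, the scaling statement is immediate from Proposition~\ref{prop: scaleIndex}: inflation by $c$ is a bijection from $\mathcal{M}^{\{1,3\}}_n$ onto $\mathcal{M}^{\{c,3c\}}_{cn}$ (its inverse divides every part by $c$) that multiplies each index by $c$, so it sends the monomial $x^n y^k$ to $x^{cn}y^{ck}$; hence $A_{\{c,3c\}}(x,y)=A_{\{1,3\}}(x^c,y^c)$.
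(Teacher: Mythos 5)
Your proposal is correct and follows essentially the same route as the paper: specialize Lemma~\ref{lem: acyclicRecursive} to $j=3$ (your chain $c^1_{n,k}=b^2_{n-1,k}=c^2_{n-1,k}=b^1_{n-3,k-1}$ just makes explicit the step the paper states directly), arrive at $i_{n,k}=i_{n-2,k-1}+i_{n-3,k-1}$, patch in the boundary data, and finish with Propositions~\ref{prop: ItoA} and~\ref{prop: scaleIndex}. Your correction polynomial $N(x,y)=xy+x^3y^2+x^4y^2+x^3y^3-x^5y^4-x^6y^4$ agrees exactly with the paper's form $I_{\{1,3\}}(x,y)=xy+x^3y^3+\frac{2x^3y^2(1+x)}{1-x^2y-x^3y}$ after clearing denominators.
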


\begin{proof}
    For the duration of this proof, let $i_{n,k}:=i^{\{1,3\}}_{n,k}$, $b_{n,k}^\ell := b_{n,k}^\ell(\{1,3\})$ and $c_{n,k}^\ell := c_{n,k}^\ell (\{1,3\})$. Since $\{1,3\}$ is acyclic by Lemma~\ref{prop: 1jAcyclic}, we can apply the recursions from Lemma~\ref{lem: acyclicRecursive} with $j=3$. This lemma implies that for $n\geq 7$, $c^1_{n,k} = b^1_{n-3,k-1} = i_{n-3,k-1}$ and $c^2_{n,k} = b^1_{n-2,k-1} = i_{n-2,k-1}$. Thus, since $i_{n,k} = b^1_{n,k} = c^1_{n,k} + c^2_{n,k},$ we have $i_{n,k} = i_{n-2,k-1} + i_{n-3,k-1}$ for $n\geq 7$, which together with initial conditions, gives us
    \[
    I_{\{1,3\}}(x,y) = xy + x^3y^3 + \frac{2x^3y^2(1+x)}{1-x^{2}y - x^{3}y}
    \]
    Proposition \ref{prop: ItoA} then implies the desired result. We can then find $A_{\{c,3c\}}(x,y)$ for all $c \geq 1$ by Proposition \ref{prop: scaleIndex}.
\end{proof}



\begin{theorem} \label{thm: 14Indec}
    We have 
    \[
        A_{\{1,4\}}(x,y) = \frac{1-x^{2}-x^{4}-x^{3}y+x^{6}}{(1-xy-x^4y^4)(1-x^{2}-x^{4} - x^{3}y+x^{6})-2x^4y(x+y-x^3)}
    \]
    and for any $c \geq 1$, $A_{\{c,4c\}}(x,y) = A_{\{1,4\}}(x^c,y^c)$.
\end{theorem}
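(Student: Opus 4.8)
The plan is to follow the same strategy as in the proof of Theorem~\ref{thm: 13Indec}, now instantiating Lemma~\ref{lem: acyclicRecursive} with $j=4$. Since $\{1,4\}$ is acyclic by Proposition~\ref{prop: 1jAcyclic}, all four recursions apply for $n$ large. Writing $i_{n,k} := i^{\{1,4\}}_{n,k}$, $b^\ell_{n,k} := b^\ell_{n,k}(\{1,4\})$, and $c^\ell_{n,k} := c^\ell_{n,k}(\{1,4\})$, and noting $\delta = 0$ and $\lfloor j/2\rfloor = \lceil j/2\rceil = 2$, I would first reduce each of the three relevant $c$-terms. Recursion~(4) with $\ell = 1$ gives $c^1_{n,k} = b^1_{n-4,k} = i_{n-4,k}$; recursion~(3) with $\ell = 3$ gives $c^3_{n,k} = b^1_{n-3,k-1} = i_{n-3,k-1}$; and recursion~(3) with $\ell=2$ gives $c^2_{n,k} = b^2_{n-2,k}$. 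To eliminate $b^2$, I would apply recursion~(2) together with recursion~(1) and the value of $c^1$, namely $b^2_{n,k} = b^1_{n,k} - c^1_{n,k} = i_{n,k} - i_{n-4,k}$, so that $c^2_{n,k} = i_{n-2,k} - i_{n-6,k}$.

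Summing these in recursion~(1), $i_{n,k} = c^1_{n,k} + c^2_{n,k} + c^3_{n,k}$, yields the two-variable recursion
\[
i_{n,k} = i_{n-2,k} + i_{n-3,k-1} + i_{n-4,k} - i_{n-6,k},
\]
valid for all sufficiently large $n$. Translating this into the generating function $I_{\{1,4\}}(x,y)$, the bulk contributes the factor $1 - x^2 - x^4 - x^3y + x^6$, and after adding the polynomial correction from the finitely many small indecomposable meanders I expect to obtain
\[
I_{\{1,4\}}(x,y) = xy + x^4y^4 + \frac{2x^4y(x+y-x^3)}{1-x^{2}-x^{4}-x^{3}y+x^{6}}.
\]
Applying Proposition~\ref{prop: ItoA} via $A_{\{1,4\}} = 1/(1 - I_{\{1,4\}})$ and clearing the common denominator then gives exactly the claimed rational function, since $1 - I_{\{1,4\}}$ simplifies to $\frac{(1-xy-x^4y^4)(1-x^{2}-x^{4}-x^{3}y+x^{6})-2x^4y(x+y-x^3)}{1-x^{2}-x^{4}-x^{3}y+x^{6}}$. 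The statement for $A_{\{c,4c\}}$ then follows from Proposition~\ref{prop: scaleIndex} exactly as in the previous theorems.

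The main obstacle will be pinning down the initial conditions precisely, i.e.\ determining the exact polynomial numerator rather than the bulk denominator. Two features require care. First, because the reduction of $c^2_{n,k}$ chains through $b^2_{n-2,k}$, the final recursion only becomes valid past a slightly larger threshold than the nominal $n \geq 2j+\lfloor j/2\rfloor = 10$, so the low-order coefficients must be computed by hand. Second, and more subtly, the term $x^4y^4$ records the meander $\frac{4}{4}$, which is \emph{cyclic} with index $4$; this is permitted because the acyclicity hypothesis controls only meanders of length $n > \max(J) = 4$, so this boundary meander must be inserted into $I_{\{1,4\}}$ explicitly. I would therefore verify the proposed $I_{\{1,4\}}$ by directly enumerating the indecomposable $\{1,4\}$-meanders of each small length (for instance $\frac{4}{4}$, $\frac{4}{1^4}$, $\frac{1^4}{4}$ at length $4$, giving the $x^4y^4$ and $2x^4y^2$ terms, and $\frac{4|1}{1|4}$, $\frac{1|4}{4|1}$ at length $5$, giving the $2x^5y$ term) and matching their indices against the series expansion of the right-hand side before invoking Proposition~\ref{prop: ItoA}.
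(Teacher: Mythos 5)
Your proposal is correct and follows essentially the same route as the paper: the same instantiation of Lemma~\ref{lem: acyclicRecursive} with $j=4$, the same reductions $c^1_{n,k}=i_{n-4,k}$, $c^2_{n,k}=i_{n-2,k}-i_{n-6,k}$, $c^3_{n,k}=i_{n-3,k-1}$, the same recursion $i_{n,k}=i_{n-2,k}+i_{n-3,k-1}+i_{n-4,k}-i_{n-6,k}$, and the same closed form for $I_{\{1,4\}}(x,y)$ before applying Propositions~\ref{prop: ItoA} and~\ref{prop: scaleIndex}. Your extra care about the validity threshold and the direct check of the small indecomposable meanders (including the cyclic boundary case $\frac{4}{4}$) is consistent with, and slightly more explicit than, the paper's "accounting for initial conditions."
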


\begin{proof}
   For the duration of this proof, let $i_{n,k}:=i^{\{1,4\}}_{n,k}$, $b_{n,k}^\ell := b_{n,k}^\ell(\{1,4\})$ and $c_{n,k}^\ell := c_{n,k}^\ell (\{1,4\})$. Since $\{1,4\}$ is acyclic by Lemma~\ref{prop: 1jAcyclic}, we can apply the recursions from Lemma~\ref{lem: acyclicRecursive} with $j=4$.
   This lemma implies that for $n>8$, $c^1_{n,k} = b^1_{n-4,k} = i_{n-4,k}$, \[c^2_{n,k} = b^{2}_{n-2,k}=b^1_{n-2,k} - c^1_{n-2,k} = i_{n-2,k} - i_{n-6,k},\] and  $c^3_{n,k} = b^1_{n-3,k-1} = i_{n-3,k-1}$. 
   Thus for $n>10$
    \[
    i_{n,k} = i_{n-2,k} + i_{n-3,k-1} + i_{n-4,k} - i_{n-6,k}
    \]
    Accounting for initial conditions, the generating function for the indecomposable meanders is 
    \[
        I_{\{1,4\}}(x,y) = xy + x^4y^4 +  \frac{2x^4y(x+y-x^3)}{1- x^{2}-x^{4}-x^{3}y + x^{6}}.
    \]
    Propositions \ref{prop: ItoA} and \ref{prop: scaleIndex} yield the desired results.
\end{proof}


\begin{theorem} \label{thm: 15Indec}
   We have $A_{\{1,5\}}(x,y)  = \frac{f(x)}{g(x)}$, where
   \begin{align*}
   f(x) &= 1-x^3y-x^4y^2-2x^5y-x^6y^2+x^8y^2+x^{10}y^2\\
   g(x) &= (1-xy-x^5y^5)f(x)-2x^5y^2(x+y+x^2y-x^4y-x^6y)
   \end{align*}
   and for any $c \geq 1$, $A_{\{c,5c\}}(x,y) = A_{\{1,5\}}(x^c,y^c)$.
\end{theorem}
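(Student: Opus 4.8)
The plan is to mirror the proofs of Theorems~\ref{thm: 13Indec} and~\ref{thm: 14Indec}. Since $\{1,5\}$ is acyclic by Proposition~\ref{prop: 1jAcyclic}, I would apply the recursions of Lemma~\ref{lem: acyclicRecursive} with $j=5$ to build a recurrence for $i_{n,k}:=i^{\{1,5\}}_{n,k}$, convert it into a functional equation for $I_{\{1,5\}}(x,y)$, solve for $I_{\{1,5\}}$, and then set $A_{\{1,5\}}=1/(1-I_{\{1,5\}})$ by Proposition~\ref{prop: ItoA}. The scaling identity $A_{\{c,5c\}}(x,y)=A_{\{1,5\}}(x^c,y^c)$ follows at once from Proposition~\ref{prop: scaleIndex}, exactly as before.

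For $j=5$ we have $\lfloor j/2\rfloor=2$ and $\delta=1$, so for large $n$ Lemma~\ref{lem: acyclicRecursive} yields $c^1_{n,k}=i_{n-5,k-1}$ (recursion~4) and $c^4_{n,k}=i_{n-4,k-2}$ (recursion~3), together with $c^3_{n,k}=b^2_{n-3,k-1}$ and $c^2_{n,k}=b^3_{n-2,k}$. Using recursion~2 in the form $b^2_{n,k}=i_{n,k}-c^1_{n,k}=i_{n,k}-i_{n-5,k-1}$ expresses $c^3$ in terms of $i$, and recursion~1, $i_{n,k}=\sum_{\ell=1}^{4}c^\ell_{n,k}$, closes the system.

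The one non-mechanical step is the term $c^2_{n,k}=b^3_{n-2,k}$: since $b^3_{n,k}=b^2_{n,k}-c^2_{n,k}=b^2_{n,k}-b^3_{n-2,k}$ is self-referential, it cannot be eliminated by a single shift. Passing to the bivariate generating functions $B_\ell,C_\ell$ of $b^\ell,c^\ell$, this relation becomes $B_3=B_2-x^2B_3$, so $B_3=B_2/(1+x^2)$, and it is precisely this that produces a factor $1+x^2$. Assembling $C_1=x^5yI$, $C_4=x^4y^2I$, $B_2=(1-x^5y)I$, $C_3=x^3yB_2$, and $C_2=x^2B_3=x^2B_2/(1+x^2)$ into $I=C_1+C_2+C_3+C_4$ (valid up to a correction polynomial) and clearing $1+x^2$ yields, after simplification, the kernel
\[
f(x)=1-x^3y-x^4y^2-2x^5y-x^6y^2+x^8y^2+x^{10}y^2,
\]
which one checks equals $(1+x^2)\bigl(1-x^5y-x^3y(1-x^5y)-x^4y^2\bigr)-x^2(1-x^5y)$.

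The remaining work is bookkeeping. The recursions hold only for $n\ge 2j+\lfloor j/2\rfloor=12$, so I would enumerate the finitely many small indecomposable meanders directly to determine the numerator; the two lowest contributions, $\frac{1}{1}$ (length $1$, index $1$) and the fully nested $\frac{5}{5}$ (length $5$, index $5$), account for the terms $xy$ and $x^5y^5$, and carrying the correction through the functional equation gives
\[
I_{\{1,5\}}(x,y)=xy+x^5y^5+\frac{2x^5y^2(x+y+x^2y-x^4y-x^6y)}{f(x)}.
\]
Then $A_{\{1,5\}}=1/(1-I_{\{1,5\}})=f/g$ with $g$ as stated, and Proposition~\ref{prop: scaleIndex} gives the inflation formula. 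I expect the main obstacle to be correctly resolving the self-referential $b^3$/$c^2$ relation and then pinning down the initial-condition polynomial so that no spurious low-degree terms survive; the rest is routine given Theorem~\ref{thm: shiftStart} and the acyclicity of $\{1,5\}$.
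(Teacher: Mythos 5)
Your proposal is correct and follows the paper's strategy almost exactly: invoke Lemma~\ref{lem: acyclicRecursive} with $j=5$ (justified by Proposition~\ref{prop: 1jAcyclic}), translate the resulting relations into generating functions, and finish with Propositions~\ref{prop: ItoA} and~\ref{prop: scaleIndex}. The one place you genuinely diverge is the boundary case $\ell=2=\lfloor 5/2\rfloor$: you read recursion~3 literally to get $c^2_{n,k}=b^3_{n-2,k}$ (a reduction that is indeed index-preserving, since $\frac{1^2|5}{5|1^2}$ and $\frac{5}{1^5}$ both have $2$-connectivity word $21$ and index $3$) and then resolve the self-referential relation $B_3=B_2-x^2B_3$ at the level of generating functions, which is where your factor $1+x^2$ comes from. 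The paper instead expands $c^2_{n,k}=b^2_{n-5,k-1}+b^1_{n-6,k-2}$ in the style of recursion~4, which stays polynomial and leads directly to the explicit recurrence $i_{n,k}=i_{n-3,k-1}+i_{n-4,k-2}+2i_{n-5,k-1}+i_{n-6,k-2}-i_{n-8,k-2}-i_{n-10,k-2}$ for $n>15$. Both routes yield the same kernel: your identity $f(x)=(1+x^2)\bigl(1-x^5y-x^3y(1-x^5y)-x^4y^2\bigr)-x^2(1-x^5y)$ checks out, so the $(1+x^2)$ cancels and the two functional equations determine the same $I_{\{1,5\}}$. The trade-off is minor: your version applies the lemma exactly as stated (the paper's own proof quietly uses the recursion-4 expansion outside its stated range $\ell<\lfloor j/2\rfloor$), while the paper's version avoids introducing a rational factor that must later be cleared and produces a clean linear recurrence for $i_{n,k}$. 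The remaining bookkeeping you defer (initial conditions contributing $xy$, $x^5y^5$, and the numerator $2x^5y^2(x+y+x^2y-x^4y-x^6y)$, whose lowest term $2x^5y^3$ correctly accounts for $\frac{5}{1^5}$ and its flip) matches the paper.
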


\begin{proof}
    For the duration of this proof, let $i_{n,k}:=i^{\{1,5\}}_{n,k}$, $b_{n,k}^\ell := b_{n,k}^\ell(\{1,5\})$ and $c_{n,k}^\ell := c_{n,k}^\ell (\{1,5\})$. Since $\{1,5\}$ is acyclic by Lemma~\ref{prop: 1jAcyclic}, we can apply the recursions from Lemma~\ref{lem: acyclicRecursive} with $j=5$. We use this lemma to compute $c_{n,k}^\ell$ for $1\leq \ell\leq 4.$ In particular, we can find that for $n > 10$ $c_{n,k}^1 = b^1_{n-5,k-1} = i_{n-5,k-1}$, 
    \begin{align*}
    c_{n,k}^2 &= b^2_{n-5,k-1} + b^1_{n-6,k-2}\\
    &= b^1_{n-5,k-1} - c^1_{n-5,k-1} + i_{n-6,k-2}\\
    &= i_{n-5,k-1} - i_{n-10,k-2}+ i_{n-6,k-2},
    \end{align*}
    and that \[ c_{n,k}^3 = b^2_{n-3,k-1}= b^1_{n-3,k-1} - c^1_{n-3,k-1}
        = i_{n-3,k-1} - i_{n-8,k-2}\] and  $c^4_{n,k} = b^1_{n-4,k-2} = i_{n-4,k-2}$. Finally, using the fact that $i_{n,k} = b_{n,k}^1=\sum_{\ell=1}^4 c_{n,k}^\ell,$ we obtain for $n > 15$,
       \begin{align*}
        i_{n,k} &= i_{n-3,k-1} + i_{n-4,k-2} + 2 i_{n-5,k-1} + i_{n-6,k-2} - i_{n-8,k-2} - i_{n-10,k-2}.
   \end{align*}
    Accounting for initial conditions, we find $I_{\{1,5\}}(x,y)$ is the rational function,
    \[I_{\{1,5\}}(x,y) = xy + x^5y^5 +
     \frac{2x^5y^2(x+y+x^2y-x^4y-x^6y)}{1-x^3y-x^4y^2-2x^5y-x^6y^2+x^8y^2+x^{10}y^2}
    \]
    Propositions \ref{prop: ItoA} and \ref{prop: scaleIndex} yield the desired results.
\end{proof}

We can apply the same process to find a generating function when $J = \{1,7\}$. We suppress some of the steps because they are identical to the previous results.

\begin{theorem} \label{thm: 17Indec}
    We have $A_{\{1,7\}}(x,y) = \dfrac{f(x)}{g(x)}$, where
    \begin{align*}
    f(x) &=1 - x^4 y- x^5 y^2  - x^6 y^3 - 3 x^7 y - 2 x^8 y^2 - x^9 y^3 - x^{18} y^3 - x^{21} y^3 + 2 x^{11} y^2 \\ &\quad+ 2 x^{12} y^3 + 3 x^{14} y^2 + 2 x^{15} y^3\\
    g(x) &= f(x)(1-xy-x^7y^7) - 2x^7y^4- 2x^8y^2 - 2x^9y^3-2x^{10}y^4+2x^{12}y^3+4x^{13}y^4\\
    &\quad+4x^{15}y^3+4x^{16}y^4-2x^{19}y^4 - 2x^{22}y^4
    \end{align*}
    and for any $c \geq 1$, $A_{\{c,7c\}}(x,y) = A_{\{1,7\}}(x^c,y^c)$.
\end{theorem}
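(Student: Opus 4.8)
The plan is to follow the template established in the proofs of Theorems~\ref{thm: 13Indec}--\ref{thm: 15Indec}, specializing Lemma~\ref{lem: acyclicRecursive} to $j=7$. Since $7\in\{2,3,4,5,7\}$, the set $\{1,7\}$ is acyclic by Proposition~\ref{prop: 1jAcyclic}, so all four index-tracking recursions of Lemma~\ref{lem: acyclicRecursive} apply. Writing $i_{n,k}:=i^{\{1,7\}}_{n,k}$, $b^\ell_{n,k}:=b^\ell_{n,k}(\{1,7\})$, and $c^\ell_{n,k}:=c^\ell_{n,k}(\{1,7\})$, and noting $\lfloor 7/2\rfloor=3$ and $\delta=1$, I would compute $c^\ell_{n,k}$ for each $\ell\in\{1,\ldots,6\}$: the three small values $\ell\le 3$ via recursion (4) and the three large values $\ell\ge 4$ via recursion (3), matching the usage at the boundary $\ell=\lfloor j/2\rfloor$ seen in the proof of Theorem~\ref{thm: 15Indec}. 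Applying recursion (2) together with $b^1_{n,k}=i_{n,k}$ to eliminate $b^2$ and $b^3$ in favor of the $i$'s, every $c^\ell_{n,k}$ becomes an explicit finite $\Z$-linear combination of shifted terms $i_{n-a,k-b}$.

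Summing these six expressions via recursion (1), $i_{n,k}=\sum_{\ell=1}^6 c^\ell_{n,k}$, I expect to obtain, for all sufficiently large $n$, the single linear recursion
\begin{align*}
i_{n,k} &= i_{n-4,k-1} + i_{n-5,k-2} + i_{n-6,k-3} + 3 i_{n-7,k-1} + 2 i_{n-8,k-2} + i_{n-9,k-3} \\
&\quad - 2 i_{n-11,k-2} - 2 i_{n-12,k-3} - 3 i_{n-14,k-2} - 2 i_{n-15,k-3} + i_{n-18,k-3} + i_{n-21,k-3}.
\end{align*}
Translating this into generating functions shows that $I_{\{1,7\}}(x,y)$ satisfies $f(x)\,I_{\{1,7\}}(x,y)=P(x,y)$, where $f$ is exactly the ``reversed recursion'' polynomial $1-x^4y-x^5y^2-\cdots$ of the statement and $P$ is a boundary correction; equivalently $I_{\{1,7\}}(x,y)=xy+x^7y^7+N(x,y)/f(x)$. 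Here the terms $xy$ and $x^7y^7$ record the degenerate indecomposable meanders $\tfrac11$ and $\tfrac77$, the latter consisting of three doubled arcs and one isolated node and hence of index $7$.

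The routine but genuinely delicate step is pinning down the numerator $N(x,y)$, i.e.\ the low-order initial conditions. The recursion above is valid only once $n$ is large enough for Lemma~\ref{lem: acyclicRecursive} to apply (here $n\ge 2\cdot 7+3=17$), so every indecomposable meander on $\{1,7\}$ with $n<17$ must be enumerated directly by index and folded in so that $I_{\{1,7\}}$ agrees with the recursion in each degree. This is precisely the computation the paper suppresses as ``identical to the previous results,'' and it is where nearly all the bookkeeping lives: the larger number of $\ell$-cases and the higher order of the recursion (degrees up to $21$ in $x$) make it markedly more laborious than the $j\le 5$ cases, though no new idea is needed. With $N$ determined, Proposition~\ref{prop: ItoA} yields
\[
A_{\{1,7\}}(x,y)=\frac{1}{1-I_{\{1,7\}}(x,y)}=\frac{f(x)}{f(x)\bigl(1-xy-x^7y^7\bigr)-N(x,y)}=\frac{f(x)}{g(x)},
\]
which matches the stated $g$. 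Finally, $A_{\{c,7c\}}(x,y)=A_{\{1,7\}}(x^c,y^c)$ is immediate from Proposition~\ref{prop: scaleIndex}, exactly as in the earlier theorems: inflation by $c$ multiplies every length and every index by $c$, hence substitutes $x\mapsto x^c$ and $y\mapsto y^c$ in the generating function.
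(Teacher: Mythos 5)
Your proposal follows exactly the paper's route: specialize Lemma~\ref{lem: acyclicRecursive} to $j=7$ (using recursion (4) for $\ell\le 3$ and recursion (3) for $\ell\ge 4$, which is indeed how the paper handles the boundary case), and the twelve-term recursion you predict for $i_{n,k}$ agrees term-for-term with the one the paper derives before passing to $I_{\{1,7\}}=xy+x^7y^7+N(x,y)/f(x)$ and invoking Propositions~\ref{prop: ItoA} and~\ref{prop: scaleIndex}. The only part you leave implicit --- the low-order initial conditions determining $N(x,y)$ --- is likewise suppressed in the paper's proof, so the argument is correct and essentially identical.
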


\begin{proof}
    For the remainder of this proof, let $i_{n,k} \coloneqq i^{\{1,7\}}_{n,k}$. By iteratively applying the recursions from Lemma \ref{lem: acyclicRecursive} as in the previous results, we find
    \begin{align*}
    c_{n,k}^1 &= i_{n-7,k-1}, \quad 
    c_{n,k}^2 
    = i_{n-7,k-1} - i_{n-14,k-2} + i_{n-8,k-2},\\
    c_{n,k}^3 
    &= i_{n-7,k-1} + i_{n-8,k-2} + i_{n-9,k-3} - 2i_{n-14,k-2} - 2i_{n-15,k-3} + i_{n-21,k-3},\\
        c_{n,k}^4 
        &= i_{n-4,k-1} - i_{n-11,k-2} - i_{n-11,k-2} + i_{n-18,k-3} - i_{n-12,k-3},\\
        c_{n,k}^5 
        &= i_{n-5,k-2} - i_{n-12,k-3}, \quad \text{ and }\quad 
        c_{n,k}^6 = i_{n-6,k-3}.
    \end{align*}
    Taken together, we find that for $n > 28$,
    \begin{align*}
    i_{n,k} &= i_{n-4,k-1} + i_{n-6,k-3} + 3 i_{n-7,k-1} + i_{n-5,k-2} + 2 i_{n-8,k-2} + i_{n-9,k-3} + i_{n-18,k-3} + i_{n-21,k-3}  \\
    & \quad- 2i_{n-11,k-2} -2 i_{n-12,k-3} - 3 i_{n-14,k-2}  - 2 i_{n-15,k-3}.
    \end{align*}
    Accounting for initial conditions, we conclude that
    \[
    I_{\{1,7\}}(x,y) = xy + x^7y^7 + \frac{f(x)}{g(x)}
    \]
    with
    \begin{align*}
    f(x) &= 2x^7y^4+ 2x^8y^2 + 2x^9y^3+2x^{10}y^4-2x^{12}y^3-4x^{13}y^4-4x^{15}y^3-4x^{16}y^4+2x^{19}y^4 + 2x^{22}y^4,\\
    g(x) &=1 - x^4 y- x^5 y^2  - x^6 y^3 - 3 x^7 y - 2 x^8 y^2 - x^9 y^3 - x^{18} y^3 - x^{21} y^3 + 2 x^{11} y^2 \\ &\quad+ 2 x^{12} y^3 + 3 x^{14} y^2 + 2 x^{15} y^3.
\end{align*}
    Propositions \ref{prop: ItoA} and \ref{prop: scaleIndex} yield the desired results.
\end{proof}

\section{Index of other acyclic meanders}\label{sec: moreacyclic}
In this section, we enumerate meanders on $J$ with a given index for other acyclic sets $J$ that are not covered by Lemma \ref{lem: acyclicRecursive}.

\begin{theorem} \label{thm: 23}
    We have
    \[
    A_{\{2,3\}}(x,y) =\frac{1 - x - x^{3} y}{(1-x^2y^2-x^3y^3)(1-x-x^3y)-2x^5y}
    \]
    and for any $c \geq 1$, $A_{\{2c,3c\}}(x,y) = A_{\{2,3\}}(x^c,y^c)$.
\end{theorem}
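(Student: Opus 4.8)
The strategy parallels the acyclic cases handled via Lemma \ref{lem: acyclicRecursive}, but here the smaller part has size $2$ rather than $1$, so Lemma \ref{lem: acyclicRecursive} does not directly apply and I must redo the structural analysis by hand. The overall architecture is fixed: I will find the generating function $I_{\{2,3\}}(x,y)$ for indecomposable meanders with parts in $\{2,3\}$, and then invoke Proposition \ref{prop: ItoA} to get $A_{\{2,3\}}(x,y) = 1/(1 - I_{\{2,3\}}(x,y))$, with the inflation statement $A_{\{2c,3c\}}(x,y) = A_{\{2,3\}}(x^c,y^c)$ following immediately from Proposition \ref{prop: scaleIndex}. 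Before any of this, I would verify that $\{2,3\}$ is acyclic, i.e.\ that every indecomposable meander on these parts is cycle-free for $n > 3$; the argument is the same kind of ``a cycle forces a matching partial sum'' contradiction used in Proposition \ref{prop: 1jAcyclic} for $j=2$ and $j=3$, since neither a $2$-arc nor a $3$-arc has enough nesting room to close a cycle without violating indecomposability.

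First I would set up the recursion for indecomposable meanders. Fix $\frac\lambda\mu \in \mathcal I_{n,k}^{\{2,3\}}$ and, without loss of generality, assume the first parts differ (if $\lambda_1 = \mu_1$ the meander would decompose), taking $\lambda_1 = 3, \mu_1 = 2$ by the usual top/bottom symmetry that contributes a factor of $2$. The key move, exactly as in the proofs of Theorems \ref{thm: 13Indec}--\ref{thm: 17Indec}, is to use Theorem \ref{thm: shiftStart} to ``shorten the start'' of the meander: I classify indecomposable meanders by how the lower composition $\mu$ begins relative to the leading $3$ in $\lambda$, compute the $m$-connectivity word of each short prefix, match it to a shorter configuration, and read off the index shift $\epsilon$. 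Because the part sizes are $2$ and $3$, the possible overlaps of the leading $3$-arc with the parts of $\mu$ are few (the second vertex of the $3$-block is either covered by the first $2$ of $\mu$, or sits at a boundary), so there are only a handful of prefix cases to enumerate and reduce. Each reduction expresses $i_{n,k}^{\{2,3\}}$ in terms of smaller $i_{n',k'}^{\{2,3\}}$, and I expect a recursion of the shape $i_{n,k} = i_{n-2,k-\text{?}} + i_{n-3,k-\text{?}} + (\text{correction terms})$, consistent with the denominator factor $1 - x^2y^2 - x^3y^3$ and the cross term $2x^5y$ appearing in the claimed answer.

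The main obstacle, and the step requiring genuine care, will be correctly bookkeeping the index shifts $\epsilon$ in the connectivity-word reductions when the smaller part is $2$ rather than $1$. With $1$-parts the shortening moves simply delete isolated-point contributions in a transparent way, but a leading $2$ in $\mu$ creates a nontrivial lower arc that interacts with the $3$-arc above it, so I must track carefully whether each shortening step destroys a path, merges two paths, or leaves the path count unchanged — each outcome changing $k$ differently. I would pin these down by drawing the small prefix meanders $\frac{3}{2}, \frac{3}{2|1}, \frac{3|3}{2|3}$, and so on, computing their indices directly from $2c + p$, and extracting $\epsilon$ as the difference, just as in the worked example following Theorem \ref{thm: shiftStart}. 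Once the recursion and its initial conditions (the small indecomposable meanders with $n \le 5$, which produce the $x^2y^2 + x^3y^3$ low-order terms) are correct, summing the recursion into a rational $I_{\{2,3\}}(x,y)$ and applying $A = 1/(1-I)$ is a routine algebraic simplification that yields the stated closed form; I would confirm the final expression by checking the first several coefficients $a_{n,k}^{\{2,3\}}$ against direct enumeration of $\mathcal M_n^{\{2,3\}}$.
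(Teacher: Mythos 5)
Your plan follows essentially the same route as the paper's proof: reduce to indecomposables starting with $\frac{3}{2}$ (the factor of $2$ from top/bottom symmetry), case on whether the second lower part is $2$ or $3$, apply Theorem \ref{thm: shiftStart} with matched $1$-connectivity words of small prefixes such as $\frac{2|2}{3|1}$ and $\frac{3|3}{2|3|1}$ against $\frac{3}{2|1}$ to get a recursion, and finish with Propositions \ref{prop: ItoA} and \ref{prop: scaleIndex}. One minor correction to your prediction: the recursion that falls out is $i_{n,k}=i_{n-1,k}+i_{n-3,k-1}$ (giving denominator $1-x-x^3y$), and the factor $1-x^2y^2-x^3y^3$ in the stated answer arises from the low-order terms $x^2y^2+x^3y^3$ of $I_{\{2,3\}}$ after applying $A=1/(1-I)$, not from the recursion itself.
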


\begin{proof}
    For the duration of this proof, let $i_{n,k} \coloneqq i_{n,k}^{\{2,3\}}$. Given an indecomposable meander $\frac{a_1 | a_2 | \ldots | a_m}{b_1 | b_2 | \ldots | b_t} \in \mathcal{I}_n^{\{2,3\}}$ with index $k$, there are two options for how it could start: $\frac{3|\alpha}{2|\beta}$ or $\frac{2|\beta}{3|\alpha}$, for some $\alpha\vdash n-3$ and $\beta \vdash n-2$. For simplicity, let $r_{n,k}$ denote the irreducible seaweeds beginning with $\frac{3}{2}$ so that $i_{n,k} = 2r_{n,k}$ except $i_{2,2} = i_{3,3} = 1$ while $r_{2,2} = r_{3,3} = 0$. 

    There are two cases to consider, first if $b_2 = 2$ then we have
    \[
    \frac{3 | a_2 |a_3 | \ldots | a_m}{2 |2 | b_3 | \ldots | b_t}
    \]
    Notice that $\frac{2|2}{3|1}$ and $\frac{3}{2|1}$ both have index $1$ and have the same $1$-connectivity word. Theorem \ref{thm: shiftStart} then implies that the number of meanders of length $n$ with index $k$ beginning with $\frac{2|2}{3}$ is equal to the number of meanders of length $n-1$ with index $k$ beginning with $\frac{3}{2}$, which is precisely $r_{n-1,k}$.

    If instead $b_2 = 3$, we have
    \[
    \frac{3 | a_2 |a_3 | \ldots | a_m}{2 |3 | b_3 | \ldots | b_t}
    \]    which forces $a_2 = 3$ for the meander to remain indecomposable. But now $\frac{3|3}{2|3|1}$ and $\frac{3}{2|1}$ have the same $1$-connectivity word. The index of $\frac{3|3}{2|3|1}$ is $2$, while the index of $\frac{3}{2|1}$ is $1$. Theorem \ref{thm: shiftStart} implies that the number of meanders of length $n$ with index $k$ beginning with $\frac{3|3}{2|3}$ is equal to the number of meanders of length $n-3$ with index $k-1$ beginning with $\frac{3}{2}$, which is precisely $r_{n-3,k-1}$. Using the relationship that $i_{n,k} = 2r_{n,k}$, these observations imply that for $n > 6$,
    \[
    i_{n,k} = i_{n-1,k} + i_{n-3,k-1}.
    \]
    Accounting for initial conditions, this gives us 
    \[
    I_{\{2,3\}}(x,y) = x^2y^2 + x^3y^3 + \frac{2x^5y}{1-x-x^3y}.
    \]
    Propositions \ref{prop: ItoA} and \ref{prop: scaleIndex} yield the desired results.
\end{proof}

\begin{theorem} \label{thm: 34}
    We have 
    \[
    A_{\{3,4\}}(x,y) = \frac{1-x^2-x^3y-x^4+x^6}{(1- x^3y^3 - x^4y^4)(1-x^2-x^3y-x^4+x^6) - 2x^7y + 2x^9 y}
    \]
    and for any $c \geq 1$, $A_{\{3c,4c\}}(x,y) = A_{\{3,4\}}(x^c,y^c)$.
\end{theorem}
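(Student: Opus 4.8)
The plan is to follow the same template used for Theorem~\ref{thm: 23}, treating the set $J=\{3,4\}$ as a variant of the acyclic $\{1,j\}$ arguments but with the role of the unit part played by $3$ and the larger part played by $4$. First I would verify that $\{3,4\}$ is acyclic in the required sense: since $\gcd(3,4)=1$, an indecomposable meander on $\{3,4\}$ cannot close a cycle for essentially the same reason as in Proposition~\ref{prop: 1jAcyclic}, namely a cycle would force a coincidence of partial sums contradicting indecomposability. Establishing acyclicity is what guarantees that the index decrements predictably when I shorten a meander, so I would confirm it before anything else.

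Next, by the swap symmetry $\frac{\lambda}{\mu}\leftrightarrow\frac{\mu}{\lambda}$, every indecomposable meander of length $n$ and index $k$ is counted twice by the meanders beginning $\frac{4|\alpha}{3|\beta}$ except for the two base meanders $\frac{3}{3}$ and $\frac{4}{4}$; so I set $r_{n,k}$ to count those beginning $\frac{4}{3}$ and write $i_{n,k}=2r_{n,k}$ away from the base cases, exactly as in the $\{2,3\}$ proof. Then I would perform a case analysis on how the lower composition continues after its initial $3$: the indecomposability constraint, together with the fact that parts are only $3$ or $4$, limits the number of prefixes one must examine. For each admissible prefix, I apply Theorem~\ref{thm: shiftStart} by exhibiting a short starting segment $\frac{\lambda_1}{\mu_1|1^m}$ whose $m$-connectivity word matches that of a shorter segment $\frac{\lambda_2}{\mu_2|1^m}$, and I read off the index shift $\epsilon$ from the two small meanders directly. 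These reductions convert each case into a count of the form $r_{n-a,k-b}$ for explicit small $a,b$, yielding a linear recursion for $i_{n,k}$ valid for large $n$.

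From the recursion plus the finitely many initial conditions I would assemble $I_{\{3,4\}}(x,y)$ as a rational function $x^3y^3+x^4y^4+\frac{p(x,y)}{q(x,y)}$, then invoke Proposition~\ref{prop: ItoA} to get $A_{\{3,4\}}=\frac{1}{1-I_{\{3,4\}}}$, which should simplify to the stated closed form with denominator $(1-x^3y^3-x^4y^4)(1-x^2-x^3y-x^4+x^6)-2x^7y+2x^9y$. The scaling claim $A_{\{3c,4c\}}(x,y)=A_{\{3,4\}}(x^c,y^c)$ then follows immediately from Proposition~\ref{prop: scaleIndex}, since inflation by $c$ multiplies both length and index by $c$.

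The main obstacle I anticipate is the case analysis on the continuation of the lower composition: with part sizes $3$ and $4$ rather than $1$ and $j$, several distinct prefixes must be tracked, the indecomposability condition constrains the upper composition in a correlated way, and one must carefully compute and match the relevant $m$-connectivity words so that Theorem~\ref{thm: shiftStart} applies with the correct index offset. Getting the index shifts exactly right (including the $+2x^9y$ correction term visible in the denominator, which signals a subtraction somewhere in the recursion) is the delicate bookkeeping step; the subsequent generating-function algebra and the scaling argument are routine by comparison.
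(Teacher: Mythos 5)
Your plan follows the paper's proof essentially step for step: set $r_{n,k}$ for the indecomposable meanders beginning $\frac{4}{3}$ with $i_{n,k}=2r_{n,k}$ away from the base cases $\frac{3}{3}$ and $\frac{4}{4}$, split on how the bottom (and, where indecomposability forces it, the top) composition continues, reduce each case via Theorem~\ref{thm: shiftStart} to arrive at $i_{n,k}=i_{n-2,k}+i_{n-3,k-1}+i_{n-4,k}-i_{n-6,k}$ for large $n$ (the subtracted term being exactly the correction you predicted from the $+2x^9y$ in the denominator), and finish with Propositions~\ref{prop: ItoA} and~\ref{prop: scaleIndex}. One caution: your opening step, deducing acyclicity of $\{3,4\}$ from $\gcd(3,4)=1$ via a partial-sum coincidence, is not a valid argument --- $\gcd(1,6)=1$ yet $\{1,6\}$ is not acyclic, since only the \emph{outermost} arc of a part forces a partial-sum coincidence when it closes a cycle --- but the step is also unnecessary, because Theorem~\ref{thm: shiftStart} requires only that the two explicit small prefixes share an $m$-connectivity word, which is what gets verified directly in each case, so no global acyclicity hypothesis actually enters the proof.
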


\begin{proof}
    For the duration of this proof, let $i_{n,k} \coloneqq i_{n,k}^{\{3,4\}}$. Given an indecomposable meander $\frac{a_1 | a_2 | \ldots | a_m}{b_1 | b_2 | \ldots | b_t} \in \mathcal{I}_n^{\{3,4\}}$ with index $k$, there are two options for how it could start: $\frac{4|\alpha}{3|\beta}$ or $\frac{3|\beta}{4|\alpha}$ for some $\alpha\vdash n-4$ and $\beta \vdash n-3$. For simplicity, let $r_{n,k}$ denote the irreducible seaweeds beginning with $\frac{4}{3}$, so that $i_{n,k} = 2 r_{n,k}$ except $i_{4,4} = i_{3,3} = 1$, while $r_{4,4} = r_{3,3} = 0$. 

    In our meander, if $b_2 = 4$, indecomposability forces $a_2 = 4$, so we have
    \[
    \frac{4|4|a_3|\ldots|a_m}{3|4|b_3|\ldots|b_t}.
    \]
    Notice both $\frac{4|4}{3|4|1}$ and $\frac{4}{3|1}$ have  $1$-connectivity word $1$ and index $1$. Theorem \ref{thm: shiftStart} implies that the number of meanders of length $n$ with index $k$ beginning with $\frac{4|4}{3|4}$ is equal to the number of meanders of length $n-4$ with index $k$ beginning with $\frac{4}{3}$. These are precisely enumerated by $r_{n-4,k}$.

    If $b_2 = 3$ and $a_2 = 3$, we have
    \[
    \frac{4|3|a_3|\ldots|a_m}{3|3|b_3|\ldots|b_t}.
    \]
    Using Theorem \ref{thm: shiftStart}, $\frac{4|3}{3|3|1}$ and $\frac{4}{3|1}$ have the same $1$-connectivity word. The index of $\frac{4|3}{3|3|1}$ is $2$ and the index of $\frac{4}{3}$ is $1$. So the number of meanders of length $n$ and index $k$ beginning with $\frac{4|3}{3|3}$ is equal to the number of meanders of length $n-3$ and index $k-1$ beginning with $\frac{4}{3}$. This is precisely $r_{n-3,k-1}$.

    Finally if $b_2 = 3$ and $a_2 = 4$, we have
    \[
    \frac{4|4|a_3|\ldots|a_m}{3|3|b_3|\ldots|b_t}.
    \]
    Notice $\frac{4|4}{3|3|1|1}$ and $\frac{3|3}{4|1|1}$ both have $2$-connectivity word $21$, and index $2$. Theorem \ref{thm: shiftStart} then implies that the number of indecomposable meanders beginning with $\frac{4|4}{3|3}$ of length $n$ and index $k$ is the same as the number beginning with $\frac{3|3}{4}$ of length $n-2$ and index $k$. These are enumerated by $r_{n-2,k} - r_{n-6,k}$. Indeed, we begin with all indecomposable meanders of length $n-2$ and index $k$ are remove those beginning with $\frac{4}{3|4}$. But this is the first case we considered, so these are enumerated by $r_{n-6,k}$. In all we find, for $n > 10$
    \[
    i_{n,k} = i_{n-2,k} + i_{n-3,k-1} + i_{n-4,k} - i_{n-6,k}.
    \]
    Accounting for initial conditions, this shows
    \[
    I_{\{3,4\}}(x,y) = x^3y^3 + x^4y^4 + \frac{2x^7y - 2x^9y}{1-x^2-x^3y-x^4+x^6}
    \]
    Propositions \ref{prop: ItoA} and \ref{prop: scaleIndex} yield the desired results.
\end{proof}

Notice $i^{\{3,4\}}_{n,k}$ follows the same recurrence as $i^{\{1,4\}}_{n,k}$ in Theorem \ref{thm: 14Indec}, but with initial conditions on $n$ shifted by 6. 

\begin{theorem} \label{thm: 123}
    We have $A_{\{1,2,3\}}(x,y)$ is
    \[
    \frac{1-x-x^2y-x^3y}{(1-x-x^2y-x^3y)(1-xy - x^2y^2 - x^3y^3) - (2x^2y + 4x^3y + 2x^3y^2 + 2x^4y^2 + 2x^5y)}
    \]
    and for any $c \geq 1$, $A_{\{c,2c,3c\}}(x,y) = A_{\{1,2,3\}}(x^c,y^c)$.
\end{theorem}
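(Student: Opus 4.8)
The plan is to follow the template already used for Theorem~\ref{thm: 23} and the results of Section~\ref{sec: index}: reduce to counting indecomposable meanders via Proposition~\ref{prop: ItoA}, build a recursion for $i_{n,k}:=i^{\{1,2,3\}}_{n,k}$ using Theorem~\ref{thm: shiftStart}, then invert to recover $A_{\{1,2,3\}}=\frac{1}{1-I_{\{1,2,3\}}}$ and apply Proposition~\ref{prop: scaleIndex} for the inflation statement $A_{\{c,2c,3c\}}(x,y)=A_{\{1,2,3\}}(x^c,y^c)$. As a first move I would record the swap symmetry $\frac{\lambda}{\mu}\mapsto\frac{\mu}{\lambda}$ on $\mathcal{I}^{\{1,2,3\}}_n$: it fixes exactly the three ``diagonal'' indecomposables $\frac{1}{1},\frac{2}{2},\frac{3}{3}$, of index $1,2,3$ respectively, which contribute the summand $xy+x^2y^2+x^3y^3$ to $I_{\{1,2,3\}}$, and it pairs up every other indecomposable meander with its mirror. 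Writing $r_{n,k}$ for the number of indecomposable meanders with $\lambda_1>\mu_1$, we then have $i_{n,k}=2r_{n,k}$ away from the three base cases; this is exactly what forces the uniformly even coefficients of the numerator $2x^2y+4x^3y+2x^3y^2+2x^4y^2+2x^5y$.

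The heart of the argument is to establish the core recursion
\[
i_{n,k} = i_{n-1,k} + i_{n-2,k-1} + i_{n-3,k-1}
\]
for all sufficiently large $n$, whose three terms correspond to the three monomials $x$, $x^2y$, $x^3y$ in the denominator $1-x-x^2y-x^3y$. To get it I would assume $\lambda_1>\mu_1$, so that the leading pair $(\lambda_1,\mu_1)$ lies in $\{(2,1),(3,1),(3,2)\}$, and then peel off the leftmost block (and enough of the leading $1$'s in $\mu$) exactly as in the proofs of Theorems~\ref{thm: 23} and \ref{thm: 34}. In each situation one writes down the relevant short initial segment together with a single trailing $1$, checks that it shares its connectivity word with a correspondingly shorter segment, and reads off the length shift and index change; Theorem~\ref{thm: shiftStart} then converts this into an identity between counts of $r_{\bullet,\bullet}$. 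Before doing this I would note that $\{1,2,3\}$ is acyclic, by the same style of case check as Proposition~\ref{prop: 1jAcyclic} (any cycle must close through a matching top and bottom arc, and with all part sizes at most $3$ no indecomposable configuration admits such a closure), which guarantees that the index changes recorded in these reductions are precisely the claimed $0$, $1$, $1$ and that no spurious $+2$ jump from a cycle can occur.

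Having the recursion, I would then pin down the initial conditions: the numerator $P(x,y)=2x^2y+4x^3y+2x^3y^2+2x^4y^2+2x^5y$ records exactly the small indecomposable meanders (through roughly $n\le 5$) for which the generic peeling does not yet apply, and the identity $P = (1-x-x^2y-x^3y)\bigl(I_{\{1,2,3\}}-xy-x^2y^2-x^3y^3\bigr)$ is equivalent to the recursion holding eventually together with these seed values. I would verify the seeds by direct enumeration for small $n$, cross-checking against $|\mathcal{I}_n|=3^{n-1}$; for instance one finds $i_{2,1}=2$ and $i_{3,1}=6$, $i_{3,2}=2$, $i_{3,3}=1$, which match the coefficients of $x^2y$ and of $x^3y,x^3y^2,x^3y^3$ produced by the stated $I_{\{1,2,3\}}$. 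Assembling these gives
\[
I_{\{1,2,3\}}(x,y) = xy + x^2y^2 + x^3y^3 + \frac{2x^2y + 4x^3y + 2x^3y^2 + 2x^4y^2 + 2x^5y}{1-x-x^2y-x^3y},
\]
and Proposition~\ref{prop: ItoA} yields the claimed formula for $A_{\{1,2,3\}}$ while Proposition~\ref{prop: scaleIndex} yields the inflation identity.

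The main obstacle is neither the symmetry reduction nor the final inversion, which are routine, but the middle step: carefully enumerating the finitely many leading configurations $(\lambda_1,\mu_1)\in\{(2,1),(3,1),(3,2)\}$ with their possible continuations, computing the associated connectivity words, and applying Theorem~\ref{thm: shiftStart} consistently so that the three reduction types collapse to the single clean recursion above. The most error-prone part of this is isolating the correct boundary between ``generic'' meanders (governed by the recursion) and the finitely many exceptional small cases, since it is precisely those exceptions that determine the exact numerator $P$, including the nonobvious coefficient $4x^3y$; I would therefore treat the initial-condition bookkeeping as the step requiring the most care and verify it against explicit small-$n$ data.
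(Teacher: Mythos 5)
Your proposal matches the paper's proof in all essentials: the paper likewise splits indecomposable meanders by the leading pair $(\lambda_1,\mu_1)\in\{(2,1),(3,1),(3,2)\}$ (tracked via three auxiliary sequences $r_{n,k},s_{n,k},t_{n,k}$), applies Theorem~\ref{thm: shiftStart} to each case, collapses the resulting identities to the same recursion $i_{n,k}=i_{n-1,k}+i_{n-2,k-1}+i_{n-3,k-1}$ for $n>6$, and assembles the same $I_{\{1,2,3\}}(x,y)$ before invoking Propositions~\ref{prop: ItoA} and~\ref{prop: scaleIndex}. Your small-$n$ seed values ($i_{2,1}=2$, $i_{3,1}=6$, $i_{3,2}=2$, $i_{3,3}=1$) are correct and consistent with the stated numerator, so the plan is sound and essentially identical to the paper's.
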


\begin{proof}
    For the duration of this proof, let $i_{n,k} \coloneqq i_{n,k}^{\{1,2,3\}}$. Given an indecomposable meander $\frac{a_1 | a_2 | \ldots | a_m}{b_1 | b_2 | \ldots | b_t} \in \mathcal{I}_n^{\{1,2,3\}}$ with index $k$, there are three options for how it could start, up to swapping the upper and lower composition: $\frac{2|\beta}{1|\alpha}$, $\frac{3|\gamma}{1|\alpha}$ or $\frac{3|\gamma}{2|\beta}$ for some $\alpha\vdash n-1$, $\beta \vdash n-2$ and $\gamma \vdash n-3$. Let $r_{n,k}$, $s_{n,k}$ and $t_{n,k}$ denote the number of these indecomposable meanders beginning with $\frac{2}{1}$, $\frac{3}{1}$ and $\frac{3}{2}$ respectively. We will consider each case separately.

    First, if $a_1 = 2$ and $b_1 = 1$, notice both $\frac{2}{1|1}$ and $\frac{1}{1}$ have the same $1$-connectivity word and index. Theorem \ref{thm: shiftStart} implies that $r_{n,k}$ equals the number of indecomposable meanders beginning with $\frac{1}{}$ of length $n-1$ and index $k$, so $r_{n,k} = r_{n-1,k} + s_{n-1,k}$.

    Next, if $a_1 = 3$ and $b_1  = 1$, to be indecomposable we must have $b_2 = 1$ or $b_2 = 3$. If $b_2 = 1$, both $\frac{3}{1|1|1}$ and $\frac{2}{1|1}$ have the same $1$-connectivity word. $\frac{3}{1|1|1}$ has index $2$, while $\frac{2}{1|1}$ has index $1$. Theorem \ref{thm: shiftStart} implies that the number of indecomposable meanders beginning with $\frac{3}{1|1}$ of length $n$ and index $k$ is equal to the number of indecomposable meanders beginning with $\frac{2}{1}$ of length $n-1$ and index $k-1$. This is exactly $r_{n-1,k-1}$.

    If instead, $b_2 = 3$, both $\frac{1|3}{3|1}$ and $\frac{3}{2|1}$ have the same $1$-connectivity word. $\frac{1|3}{3|1}$ has index $2$, while $\frac{3}{2|1}$ has index $1$. Theorem \ref{thm: shiftStart} implies that the number of indecomposable meanders beginning with $\frac{3}{1|3}$ of length $n$ and index $k$ equals $t_{n-1,k-1}$

    Finally, if $a_1 = 3$ and $b_1 = 2$, notice both $\frac{3}{2|1}$ and $\frac{2}{1|1}$ have the same $1$-connectivity word and index. Theorem \ref{thm: shiftStart} implies $t_{n,k} = r_{n-1,k}$.

    In all we find
    \begin{align*}
    \tfrac{1}{2} i_{n,k} &= r_{n-1,k} + s_{n-1,k} +r_{n-1,k-1} + t_{n-1,k-1} + r_{n-1,k}
    \end{align*}
    However, $r_{n-1,k} = r_{n-2,k} + s_{n-2,k} = t_{n-1,k} + r_{n-3,k-1} + t_{n-3,k-1}$, and $t_{n-1,k-1} = r_{n-2,k-1} = r_{n-3,k-1} + s_{n-3,k-1}$, and finally $r_{n-1,k-1} = r_{n-2,k-1} + s_{n-2,k-1}$. Making these substitutions, the right hand side of our equality becomes
    \begin{align*}
    &r_{n-1,k} + s_{n-1,k} + (r_{n-2,k-1} + s_{n-2,k-1}) + (r_{n-3,k-1} + s_{n-3,k-1}) + (t_{n-1,k} + r_{n-3,k-1} + t_{n-3,k-1})\\
    = \, & (r_{n-1,k} + s_{n-1,k} + t_{n-1,k}) + (r_{n-2,k-1} + s_{n-2,k-1} + t_{n-2,k-1}) + (r_{n-3,k-1} + s_{n-3,k-1} + t_{n-3,k-1})\\
    = \, & \tfrac{1}{2} (i_{n-1,k} + i_{n-2,k-1} + i_{n-3,k-1}).
    \end{align*}
    This implies for $n > 6$, $i_{n,k} = i_{n-1,k} + i_{n-2,k-1} + i_{n-3,k-1}$. Accounting for initial conditions, we then find
    \[
    I_{\{1,2,3\}}(x,y) = xy + x^2y^2 + x^3y^3 + \frac{2x^2y + 4x^3y + 2x^3y^2 + 2x^4y^2 + 2x^5y}{1-x-x^2y-x^3y}.
    \]
    Propositions \ref{prop: ItoA} and \ref{prop: scaleIndex} yield the desired results.
\end{proof}

\begin{theorem} \label{thm: 134}
    We have that $A_{\{1,3,4\}}(x,y) = \frac{f(x)}{g(x)}$ where
    \begin{align*}
    f(x) &= 1-2x+2x^2-2x^3+x^4-x^2y\\
    g(x) &= 1-x(2+y) +x^2(2+y) - x^3(2+2y+y^2+y^3) +x^4(1-2y+2y^3-y^4) \\
    &\quad + x^5(5y-2y^3+3y^4) + x^6(-4y+2y^3-2y^4+y^5) + x^7(2y-y^3+2y^4) - x^8y^4
    \end{align*}
    and for any $c \geq 1$, $A_{\{c,3c,4c\}}(x,y) = A_{\{1,3,4\}}(x^c,y^c)$.
\end{theorem}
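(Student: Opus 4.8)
The plan is to follow the template of the three-element case in Theorem~\ref{thm: 123}, now applied to $J=\{1,3,4\}$. Since Section~\ref{sec: moreacyclic} concerns acyclic sets, the first order of business is to confirm that $\{1,3,4\}$ is acyclic; as in Proposition~\ref{prop: 1jAcyclic} this can be done by a finite case analysis verifying that, in an indecomposable meander, no inner arc of a $3$- or $4$-block can be ``capped'' to close a cycle (the shared-partial-sum that a full $3$- or $4$-arc would require is ruled out by indecomposability). With acyclicity in hand, every connectivity-word reduction we make preserves exactly the index shift predicted by Theorem~\ref{thm: shiftStart}, which is what makes the bookkeeping tractable.

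Next I would set up the auxiliary counts. Writing $i_{n,k}:=i^{\{1,3,4\}}_{n,k}$, an indecomposable meander must have $\lambda_1\neq\mu_1$ (otherwise $\lambda_1=\mu_1\in PS(\lambda)\cap PS(\mu)$ prematurely), so up to swapping the two rows it begins with one of $\frac{3}{1}$, $\frac{4}{1}$, or $\frac{4}{3}$. Let $p_{n,k}$, $q_{n,k}$, $s_{n,k}$ count the indecomposable meanders of length $n$ and index $k$ beginning with each of these respectively, so that $i_{n,k}=2(p_{n,k}+q_{n,k}+s_{n,k})$ once $n$ is past the small initial range.

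Then for each of $p$, $q$, $s$ I would branch on the admissible next part of the shorter (lower) composition. As in Theorems~\ref{thm: 23} and~\ref{thm: 34}, the constraint $PS(\lambda)\cap PS(\mu)=\{n\}$ forces certain continuations (for instance a matching large part on top to avoid a premature shared partial sum), which keeps the number of live branches finite. In each branch I pad the prefix with trailing $1$s, read off its $m$-connectivity word, match it against a shorter standard prefix (one of $\frac{3}{1^\ast}$, $\frac{4}{1^\ast}$, $\frac{4}{3}$) carrying the same word, and apply Theorem~\ref{thm: shiftStart} to rewrite the count as a shifted $p$, $q$, or $s$ together with an explicit index correction $\epsilon$ computed from the two prefix indices. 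Summing the branches gives a coupled linear system in $p,q,s$; eliminating $q$ and $s$ by back-substitution — the analogue of resolving the $r_{n-1,k}=\cdots$ chain in Theorem~\ref{thm: 123} — should collapse to a single scalar recursion for $i_{n,k}$ whose length shifts run up to about $8$, matching the degree of $g(x)$. Converting this recursion plus the finitely many small-length meanders into $I_{\{1,3,4\}}(x,y)$ and invoking Proposition~\ref{prop: ItoA} yields $A_{\{1,3,4\}}(x,y)=1/(1-I_{\{1,3,4\}})=f(x)/g(x)$, while Proposition~\ref{prop: scaleIndex} gives the inflation statement $A_{\{c,3c,4c\}}(x,y)=A_{\{1,3,4\}}(x^c,y^c)$.

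The main obstacle is the sheer size of the case tree: three starting types rather than two, and $\{1,3,4\}$ permits more ways for a $4$-block's two inner arcs to interleave with neighbouring $3$- and $1$-blocks than the single-large-part sets did. Getting every connectivity word and every index shift $\epsilon$ correct across all branches, and then separately enumerating the boundary meanders of small length that pin down the numerator $f(x)$ and the low-degree terms of $g(x)$, is where the genuine effort lies; by contrast, eliminating the auxiliary functions and inverting $1-I$ is routine algebra.
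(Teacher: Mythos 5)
Your plan matches the paper's proof essentially step for step: the same three starting types $\frac{3}{1}$, $\frac{4}{1}$, $\frac{4}{3}$ with auxiliary counts (the paper's $r_{n,k}$, $s_{n,k}$, $t_{n,k}$), the same branching on the next admissible part resolved via connectivity words and Theorem~\ref{thm: shiftStart}, and the same final assembly through Propositions~\ref{prop: ItoA} and~\ref{prop: scaleIndex}. The only cosmetic differences are that the paper skips a separate acyclicity verification (the explicit connectivity-word and index computations for each small prefix already do that bookkeeping) and eliminates the auxiliary quantities at the generating-function level, solving for $R$, $S$, $T$ as functional equations rather than first collapsing to a single scalar recursion.
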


\begin{proof}
    For the duration of this proof, let $i_{n,k} \coloneqq i_{n,k}^{\{1,3,4\}}$. Given an indecomposable meander $\frac{a_1 | a_2 | \ldots | a_m}{b_1 | b_2 | \ldots | b_t} \in \mathcal{I}_n^{\{1,3,4\}}$ with index $k$, there are three options for how it could start, up to reversing the top and bottom: $\frac{3}{1}$, $\frac{4}{1}$ or $\frac{4}{3}$. Let $r_{n,k}$, $s_{n,k}$ and $t_{n,k}$ denote the number of these indecomposable meanders beginning with $\frac{3}{1}$, $\frac{4}{1}$ and $\frac{4}{3}$ respectively. We will consider each case separately.

    First if $a_1 = 3$ and $b_1 = 1$ there are three cases to consider, either $b_2 = 3$, $b_2 = 1$ or $b_2 = 4$. If $b_2 = 3$, then notice $\frac{1|3}{3|1}$ and $\frac{4}{3|1}$ have the same $1$-connectivity word. However $\frac{1|3}{3|1}$ has index $2$ while $\frac{4}{3|1}$ has index $1$. Theorem \ref{thm: shiftStart} implies that the number of indecomposable meanders beginning with $\frac{3}{1|3}$ of length $n$ and index $k$ is equal to the number of indecomposable meanders beginning with $\frac{4}{3}$ of length $n$ and index $k-1$. This is exactly $t_{n,k-1}$.

    If instead $b_2 = 1$, then $b_3$ can be $3$ or $4$. If $b_3 = 3$, $\frac{1|1|3}{3|1|1}$ and $\frac{3}{1|1|1}$ have the same $2$-connectivity word. $\frac{1|1|3}{3|1|1}$ has index $3$ while $\frac{3}{1|1|1}$ has index $2$. Theorem \ref{thm: shiftStart} implies that the number of indecomposable meanders beginning with $\frac{3}{1|1|3}$ of length $n$ and index $k$ is equal to the number of indecomposable meanders beginning with $\frac{3}{1}$ of length $n-2$ and index $k-1$. This is exactly $r_{n-2,k-1}$. If $b_3 = 4$, similar reasoning shows that these are enumerated by $s_{n-2,k-1}$. 
    
    Finally, if $b_2 = 4$, $\frac{4|1}{3|1|1}$ has the same $2$-connectivity word and index as $\frac{3}{1|1|1}$. Theorem \ref{thm: shiftStart} shows this case is enumerated by $r_{n-2,k}$.
    
    In all this shows
    \[
    r_{n,k} = t_{n,k-1} + r_{n-2,k} + r_{n-2,k-1} + s_{n-2,k-1}.
    \]
    Next, we consider if $a_1 = 4$ and $b_1 = 1$. In this case we must have $b_2 = 1$ or $b_2 = 4$. If $b_2 = 1$, then $\frac{4}{1|1|1|1}$ and $\frac{3}{1|1|1}$ have the same $2$-connectivity word and index. Theorem \ref{thm: shiftStart} implies that the number of indecomposable meanders beginning with $\frac{4}{1|1}$ of length $n$ and index $k$ is equal to the number of indecomposable meanders beginning with $\frac{3}{1}$ of length $n-1$ and index $k$. This is exactly $r_{n-1,k}$. If $b_2 = 4$, similar reasoning shows that these meanders are enumerated by $t_{n-1,k}$. In all we find, $s_{n,k} = r_{n-1,k} + t_{n-1,k}$.

    Finally if $a_1 = 4$ and $b_1 = 3$, we either have $b_2 = 3$ or $b_2 = 4$. In the first case, Theorem \ref{thm: shiftStart} implies these meanders are enumerated by $r_{n-3,k}$. Indeed, $\frac{3|3}{4|1|1}$ and $\frac{3}{1|1|1}$ have the same $2$-connectivity word and index. An identical argument shows if $b_2 = 4$ there are $s_{n-3,k}$ such meanders. We conclude, $t_{n,k} = r_{n-3,k} + s_{n-3,k}$.

    To find the generating function for $A_{\{1,3,4\}}(x,y)$, we use the auxiliary generating functions $R(x,y) = \sum_{n,k \geq 0} r_{n,k} x^n y^k$, $S(x,y) = \sum_{n,k \geq 0} s_{n,k} x^n y^k$ and $T(x,y) = \sum_{n,k \geq 0} t_{n,k} x^n y^k$. Accounting for initial conditions, our recursions imply:
    \begin{align*}
        R(x,y) &= (x^2 + x^2y + x^3y)R(x,y) + (x^2y + x^3y)S(x,y) + x^3y^2+x^4y^2,\\
        S(x,y) &= (x+x^4)R(x,y) + x^4S(x,y) + x^4y + x^5y,\\
        T(x,y) &= x^3(R(x,y) + S(x,y)) + x^4y
.    \end{align*}
    Solving for $S(x,y)$ in terms of $R(x,y)$ we find
    \[
    S(x,y) = \frac{(x+x^4)R(x,y) + x^4y + x^5y}{1-x^4}.
    \]
    Substituting this into the functional equation for $R(x,y)$ and solving for $R(x,y)$, we find
    \[
    R(x,y) = \frac{y^2(x^3 + x^4 + x^6 + x^7)}{1-x^2-x^4+x^6-x^2y - 2x^3y - x^4y}.
    \]
    We can substitute this back into the functional equation for $S(x,y)$ and solve,
    \[
    S(x,y) = \frac{x^4y + x^4y^2 - x^5y-x^5y^2- x^8y - x^8y^2 + x^9y + x^9y^2}{1-2x+2x^2-2x^3+2x^5-2x^6+2x^7-x^8-x^2y+x^6y}.
    \]
    Finally, substituting both of these into the functional equation for $T(x,y)$ we find
    \[
    T(x,y) = \frac{x^4y - 2x^5y + 2x^6y - x^7y}{1-2x + 2x^2 - x^2y-2x^3+x^4}.
    \]
    \noindent Since $I_{\{1,3,4\}}(x,y) = 2(R(x,y) + S(x,y) + T(x,y)) + xy+x^3y^3 + x^4y^4$, we find
    \begin{align*}
    I_{\{1,3,4\}}(x,y) = xy + x^3y^3 + x^4y^4 + \frac{2x^3y^2 + 4x^4y - 6x^5y + 4x^6y - 2x^7y - x^8y^4}{1-2x+x^2(2-y)-2x^3+x^4}.
    \end{align*}
    We note that this shows for $n > 8$, $i_{n,k}$ satisfies the recursion:
    \[
        i_{n,k} = 2i_{n-1,k} - 2i_{n-2,k} + i_{n-2,k-1} + 2i_{n-3,k} - i_{n-4,k}.
    \]
    Propositions \ref{prop: ItoA} and \ref{prop: scaleIndex} yield the desired results.
\end{proof}

\section{Some non-acyclic cases}\label{sec: cyclic}

In this section we consider the case when $J=\{1,6\}$ and $J=\{1,2,n\}.$ In the case when $J=\{1,6\}$ we cannot track the index since the set is not acyclic, but we are able to apply the recursion in Lemma \ref{lem: acyclicRecursive} to find the number of indecomposable meanders of length $n$ whose parts are restricted to $\{1,6\}$. In the case $J=\{1,2,n\},$ we are able to count meanders by their index despite $J$ being non-acyclic.

\begin{theorem} \label{thm: 16Indec}
We have
\[
I_{\{1,6\}}(x) = x + x^6 + \frac{2x^{6} + 2x^{7} + 2x^{8} - 2x^{10} - 4x^{11} - 2x^{13} + 2x^{16}}{1 - x^{3} - x^{4} - x^{5} - 2x^{6} - x^{7} + 2x^{9} + 2x^{10} + x^{12} - x^{15}}
\]
and for any $c \geq 1$, $I_{\{c,6c\}}(x) = I_{\{1,6\}}(x^c)$.
\end{theorem}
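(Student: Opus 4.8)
The plan is to follow exactly the template established in the acyclic cases (Theorems \ref{thm: 13Indec}--\ref{thm: 17Indec}), but with the index variable $y$ suppressed, since $\{1,6\}$ is not acyclic (Proposition \ref{prop: 1jAcyclic}) and so the index cannot be consistently tracked. The key enabling observation is the final sentence of Lemma \ref{lem: acyclicRecursive}: \emph{disregarding the index}, the four recursions hold for any set $\{1,j\}$, including $j=6$. Thus I would set $i_n \coloneqq i^{\{1,6\}}_n$, $b^\ell_n \coloneqq b^\ell_n(\{1,6\})$, and $c^\ell_n \coloneqq c^\ell_n(\{1,6\})$, and apply the index-free recursions with $j=6$, so that $\lfloor j/2\rfloor = 3$ and $\delta = 0$.

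Concretely, I would compute each $c^\ell_n$ for $1\le \ell \le 5$ in terms of the $i_n$ using recursions (1)--(4). For $\ell \ge 3$ (recursion 3), $c^\ell_n = b^{6-\ell}_{n-\ell}$; in particular $c^3_n = b^3_{n-3}$, $c^4_n = b^2_{n-4}$, $c^5_n = b^1_{n-5} = i_{n-5}$. For $\ell < 3$ (recursion 4), $c^1_n = b^1_{n-6} = i_{n-6}$ and $c^2_n = b^2_{n-5} + b^1_{n-6}$. I would then repeatedly use recursion (2), $b^\ell_n = b^{\ell-1}_n - c^{\ell-1}_n$, together with $b^1_n = i_n$, to unfold each remaining $b^\ell$ into a combination of $i$'s with shifted indices. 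Summing via recursion (1), $i_n = \sum_{\ell=1}^{5} c^\ell_n$, yields a single linear recursion for $i_n$ valid for $n$ sufficiently large (here $n > \max(J) \cdot 2 + \lfloor j/2\rfloor$ and beyond, so that all the boundary overlaps have cleared). This recursion, read off against the denominator $1 - x^{3} - x^{4} - x^{5} - 2x^{6} - x^{7} + 2x^{9} + 2x^{10} + x^{12} - x^{15}$, is exactly what one must verify matches the claimed generating function.

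Once the recursion for $i_n$ is in hand, I would account for the initial conditions by direct enumeration of the small indecomposable meanders (the isolated-node case $\tfrac{1}{1}$ contributing $x$, the single block $\tfrac{6}{1^6}$ and its reflection handled by the $x^6$ term, and the finitely many indecomposable meanders of length below the recursion's threshold). Matching the low-order coefficients fixes the numerator $2x^{6} + 2x^{7} + 2x^{8} - 2x^{10} - 4x^{11} - 2x^{13} + 2x^{16}$, giving the stated closed form for $I_{\{1,6\}}(x)$. The inflation statement $I_{\{c,6c\}}(x) = I_{\{1,6\}}(x^c)$ follows immediately from Proposition \ref{prop: scaleIndex} (which guarantees inflation by $c$ sends parts in $\{1,6\}$ to parts in $\{c,6c\}$ and multiplies the index by $c$, so disregarding index it is a length rescaling $x \mapsto x^c$), exactly as in the previous theorems.

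The main obstacle I anticipate is twofold. First, bookkeeping: because $j=6$ is even and larger than the earlier cases, there are five values of $\ell$ and the unfolding of the $b^\ell$'s via recursion (2) produces many overlapping shifted terms that must be collected carefully; a sign or index error here is easy to make and is precisely what the delicate numerator is checking. Second, and more subtly, I must be certain that the index-free recursions genuinely apply despite the presence of cycles — the content of the lemma's closing remark is that the reductions depend only on how a meander \emph{starts} (its partial-sum structure and the shortening move of Theorem \ref{thm: shiftStart}), not on whether the completed meander is acyclic, so the counts $i_n$, $b^\ell_n$, $c^\ell_n$ still satisfy (1)--(4). Verifying that the initial conditions are taken far enough out that the clean linear recursion has genuinely taken hold (i.e. no leftover small-$n$ cyclic anomalies leak in) is the place where I would be most careful.
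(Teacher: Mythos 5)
Your proposal follows the paper's proof exactly: the paper likewise invokes the index-free version of Lemma \ref{lem: acyclicRecursive} with $j=6$, computes $c^1_n,\dots,c^5_n$ in terms of shifted $i_n$'s, sums via recursion (1) to obtain the linear recursion $i_n = i_{n-3}+i_{n-4}+i_{n-5}+2i_{n-6}+i_{n-7}-2i_{n-9}-2i_{n-10}-i_{n-12}+i_{n-15}$ for $n>21$, and then matches initial conditions. One bookkeeping slip to correct when you carry this out: recursion (4) with $j=6$, $\ell=2$ gives $c^2_n = b^1_{n-7}+b^2_{n-6}$, not $b^1_{n-6}+b^2_{n-5}$.
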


\begin{proof}
    For the remainder of this proof, let $i_{n}$ denote $i^{\{1,6\}}_{n}$. By iteratively applying the recursions in Lemma \ref{lem: acyclicRecursive} we obtain $c_{n}^1 = i_{n-6}$, $c_n^2=i_{n-6} - i_{n-12} + i_{n-7},$ $c_n^3 = i_{n-6} - i_{n-12} + i_{n-7}$, $c_n^4 = i_{n-4} -i_{n-10}$, and $c_n^5 = i_{n-5}.$
    In all, using the first recursion we find for $n > 21$,
    \begin{align*}
       i_{n} &= i_{n-3} + i_{n-4} + i_{n-5}  + 2 i_{n-6} + i_{n-7} - 2i_{n-9} - 2 i_{n-10} - i_{n-12} + i_{n-15}.
   \end{align*}
   Accounting for initial conditions, this proves the desired result.
\end{proof}

We note that $A_{\{1,6\}}(x)$ is just the generating function for pairs of compositions of $n$ composed solely of $1$'s and $6$'s. The more interesting result is actually $I_{\{1,6\}}(x)$, which imposes the additional indecomposability condition. For completeness, Proposition \ref{prop: ItoA} gives us
\[
A_{\{1,6\}}(x) =
\frac{1 - x^{3} - x^{4} - x^{5} - 2x^{6} - x^{7} + 2x^{9} + 2x^{10} + x^{12} - x^{15}}
{\left(1 - x - 2x^{3} + x^{6}\right)\left(1 + x^{3} + x^{4} + x^{5} - 2x^{6} - x^{7} - 2x^{9} - 2x^{10} + x^{12} + x^{15}\right)}.
\]
which is indeed the generating function for the squares of \cite[A005708]{OEIS}.

We now consider the case where $a_1 = n$, so that our meanders look like $\frac{n}{b_1| b_2| \ldots |b_t}$ and $b_i \in \{1,2\}$, it is not hard to see that these are enumerated by the Fibonacci numbers.

\begin{theorem}\label{thm:12n}
    We have \[A_{\{1,2,n\}}(x,y) =A_{\{1,2\}}(x,y) + \frac{3x^{3}y^{3} + 2x^{3}y^{2} - 2x^{2}y - 2}{1 - xy} +\frac{2(1+xy-x^2+x^2y^2-x^4y^2)}{1-x^2-x^2y-x^4y - x^4y^2+x^6y}  \]
    where $A_{\{1,2\}}(x,y)$ is defined in Theorem~\ref{thm: 12Indec}.
\end{theorem}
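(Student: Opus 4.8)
The plan is to separate the meanders counted by $A_{\{1,2,n\}}$ according to how a part of size $n$ is used. For $n\le 2$ we have $\{1,2,n\}=\{1,2\}$, so those coefficients already agree with $A_{\{1,2\}}(x,y)$, which explains its appearance as the first summand. For $n\ge 3$ a part of size $n$ cannot coexist with any other part in a composition of $n$, so a composition of $n$ with parts in $\{1,2,n\}$ is either the single block $(n)$ or a composition of $n$ into $1$'s and $2$'s. Hence every meander in $\M_n^{\{1,2,n\}}$ not already counted by $A_{\{1,2\}}$ has $\lambda=(n)$, or $\mu=(n)$, or both. First I would note that transposing a meander (swapping $\lambda$ and $\mu$, i.e. reflecting the picture through the horizontal axis) is a graph isomorphism and therefore preserves the numbers of cycles and paths, hence the index. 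Consequently the meanders $\frac n\mu$ and $\frac\mu n$ (with $\mu$ a composition into $1$'s and $2$'s) contribute identical generating functions, and the ``both'' case is the single meander $\frac nn$.

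Next I would dispatch the easy pieces. For $\frac nn$ the upper and lower blocks are identical rainbows, so every top arc coincides with a bottom arc; this produces $\lfloor n/2\rfloor$ doubled arcs (cycles) together with one isolated vertex when $n$ is odd, giving $\ind(\frac nn)=2\lfloor n/2\rfloor+[n\text{ odd}]=n$, and summing $x^ny^n$ over $n\ge 3$ contributes $\frac{x^3y^3}{1-xy}$. It then remains to compute $B(x,y)=\sum_{n\ge3}\sum_{\mu}x^ny^{\ind(\frac n\mu)}$, where $\mu$ runs over compositions of $n$ into $1$'s and $2$'s, after which the result follows from the assembly identity $A_{\{1,2,n\}}=A_{\{1,2\}}+2B+\frac{x^3y^3}{1-xy}$ together with a routine simplification into the two displayed fractions.

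The heart of the argument is evaluating $B(x,y)$, where I would exploit the symmetry of the top block. Since the upper composition is the single block $(n)$, the upper arcs form a rainbow pairing position $i$ with $n+1-i$. Folding the meander about its vertical centre line identifies $i$ with $n+1-i$ and turns the picture into a two-row ladder on $\lfloor n/2\rfloor$ columns: each column carries a ``rung'' from a top arc, the lower arcs lying in the left half of $\mu$ become horizontal edges along the top row, those in the right half become horizontal edges along the bottom row, a lower arc straddling the centre becomes a doubled rung, and when $n$ is odd the central vertex contributes a pendant whose bottom arc must be treated as a boundary condition. Crucially, the top-row and bottom-row edge patterns are governed by two essentially independent compositions of roughly $n/2$ into $1$'s and $2$'s (the left and right halves of $\mu$), the only coupling occurring at the central column. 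This reduces the cycle/path count of $\frac n\mu$ to a connectivity-tracking transfer matrix read column by column: the state records, for the current frontier column, whether a top and/or bottom horizontal edge exits to the right and how the two frontier vertices are joined, with each completed path weighted by $y$ and each completed cycle by $y^2$ so that $y$ tracks the index $2c+p$. Solving this finite-state system produces a rational generating function whose denominator is exactly $1-x^2-x^2y-x^4y-x^4y^2+x^6y$; each column advancing $n$ by $2$ explains why only even powers of $x$ occur in the denominator, the odd powers entering through the numerator via the central-column boundary term.

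I expect the main obstacle to be the transfer-matrix step: correctly identifying the finite set of connectivity states on the folded ladder, getting the weighting of completed paths and cycles right so that $2c+p$ is tracked by $y$, and in particular handling the two boundary subtleties — the doubled/straddling central rung in the even case and the pendant central vertex in the odd case — since these separate the even and odd cases and produce the precise numerators. I would guard against slips by checking the resulting series for $B(x,y)$ against direct enumeration of $\frac n\mu$ for small $n$ (say $n=3,4,5,6$) before assembling $2B+\frac{x^3y^3}{1-xy}+A_{\{1,2\}}$ and simplifying to the stated form.
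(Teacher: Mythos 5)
Your outer decomposition is exactly the paper's: split $\M_n^{\{1,2,n\}}$ for $n\ge 3$ into $\frac{n}{\alpha}$, $\frac{\alpha}{n}$, $\frac{\alpha}{\beta}$, and $\frac{n}{n}$, use the up--down symmetry to double the first case, compute $\ind(\frac{n}{n})=n$ directly, and reduce everything to one generating function $B$ (the paper's $R(x,y)$ minus its $n\le 2$ terms); your algebraic assembly $A_{\{1,2,n\}}=A_{\{1,2\}}+2B+\frac{x^3y^3}{1-xy}$ is correct and reproduces the displayed formula. Where you diverge is in the engine that actually computes $B$, and that is where the proposal stops short. The paper's argument is a direct peeling recursion: since $\lambda=(n)$ forces $v_1$ and $v_n$ into the same component, one classifies that component (a $4$-cycle when $\mu=2|\cdots|2$, otherwise a path of even length $2j$ anchored at $v_1$ or at $v_n$), removes it, and obtains $r_{n,k}=r_{n-2,k-1}+r_{n-4,k-2}+2\sum_{j\ge 2}r_{n-2j,k-1}$, which immediately yields the rational function. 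Your folded-ladder transfer matrix is a plausible alternative --- the rainbow does identify $i$ with $n+1-i$, both halves of $\mu$ are anchored at column $1$ after folding, and every vertex has degree at most $2$, so a finite connectivity-state automaton exists --- but you never construct it: the states are not listed, the path/cycle weightings are not verified, and the even/odd boundary conditions at the central column are only named as difficulties. Since this computation is the entire mathematical content of the theorem beyond the easy decomposition, and since your claim that the denominator comes out to $1-x^2-x^2y-x^4y-x^4y^2+x^6y$ is asserted by matching the target rather than derived, the proof is incomplete as written. If you pursue your route, carry out the transfer matrix explicitly (or switch to the endpoint-peeling recursion, which is shorter and avoids the even/odd case split entirely) and then the small-$n$ checks you propose become a verification rather than a substitute for the derivation.
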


\begin{proof}
    Let $r_{n,k}$ denote the number of meanders of the form $\frac{n}{b_1| b_2| \ldots |b_t}$ where $b_i \in \{1,2\}$, let $R(x,y) = \sum_{n,k \geq 0} r_{n,k} x^n y^k$. 
    First, notice that  
    \[A_{\{1,2,n\}}(x,y) = 2(R(x,y)-1-xy-x^2y^2 - x^2y) + A_{\{1,2\}}(x,y) + \frac{(xy)^3}{1-xy}\] because the possible meanders we obtain fall into four disjoint cases: $\frac{\lambda}{\mu} = \frac{n}{\alpha}$ with $n\geq 3$ and  $\alpha\in\M_n^{\{1,2\}}$, $\frac{\lambda}{\mu} = \frac{\alpha}{n}$ with $n\geq 3$ and $\alpha\in\M_n^{\{1,2\}}$, $\frac{\lambda}{\mu} = \frac{\alpha}{\beta}$ with $\alpha, \beta\in\M_n^{\{1,2\}}$, or $\frac{\lambda}{\mu} = \frac{n}{n}$ with $n\geq 3.$
    Clearly the last two cases are enumerated by $A_{\{1,2\}}(x,y) + \frac{(xy)^3}{1-xy}$ and the first and second cases are equal in number, and are enumerated by exactly $R(x,y)-1-xy-x^2y^2 - x^2y$. Thus it suffices to find $R(x,y).$
    
    Given a meander of the form $\frac{n}{b_1| b_2| \ldots |b_t}$ where $b_i \in \{1,2\}$, let $v_1,\dots,v_n$ be the ordered nodes in our meander. We will consider the number of nodes contained in the path that contains the first and last node in the meander. Since $a_1 = n$, $v_1$ and $v_n$ are necessarily in the same path (or cycle). 
    
    One exceptional case we must consider is if $v_1,v_2,v_{n-1}$ and $v_n$ form a cycle in the meander, which occurs if the bottom composition of the meander looks like $2|b_2|\cdots|b_{t-1}|2$. In this case, if we remove this cycle, the remaining meanders are enumerated by $r_{n-4,k-2}$. This is the only way that the sub-meander containing $v_1$ and $v_n$ could be a cycle. Indeed, if $b_1 = 1$, $v_1$ is only connected to $v_n$ and if $b_t = 1$ then $v_n$ is only connected to $v_1$. 
    
    It remains to consider the case where $v_1$ and $v_n$ are contained in a path. We must consider the possible length of such a path, and whether the path begins with $v_1$ or $v_n$. Any such path will necessarily contain an even number of nodes, since it corresponds to the bottom composition being either $2^k|1|\beta|2^\ell|1$, $1|2^k|\beta|1|2^\ell$, $2^k|\beta|1|2^\ell|1$ or $1|2^k|1|\beta|2^\ell$, where $\beta \in \mathcal{M}_{n-2(k+\ell+1)}^{\{1,2\}}$; we then remove the $2(k+\ell)+2$ nodes that are part of the path containing $v_1$ and $v_n$. 
    
    So the possible path lengths are $2j$ where $1 \leq j \leq \lfloor n/2 \rfloor$. If $j = k+\ell+1$, when we remove such a path from a meander of the form $2^k|1|\beta|2^\ell|1$ or $2^k|\beta|1|2^\ell|1$, the remaining meanders $\beta$ are enumerated by $r_{n-2j,k-1}$. These are precisely the meanders where the path of length $2j$ begins with $v_n$. Likewise, if the path begins with $v_1$ there are $r_{n-2j,k-1}$ such meanders. The only exception is when $j = 1$, in which case the path is length $2$, collapsing the different cases. When we remove this path of length $2$, the remaining meanders are enumerated by $r_{n-2,k-1}$. In all, we find for $n \geq 3$, $k \geq 2$
    \[
        r_{n,k} = r_{n-2,k-1} + r_{n-4,k-2} + 2\sum_{j=2}^{\lfloor n/2 \rfloor} r_{n-2j,k-1}
    \]
    and $r_{0,0} = r_{1,1} = r_{2,1} = r_{2,2} = 1$, and $r_{n,1} = 2$ for $n \geq 3$. This implies:
    \[
    R(x,y) = \frac{1+xy-x^2+x^2y^2-x^4y^2}{1-x^2-x^2y-x^4y - x^4y^2+x^6y}.
    \]
    Since $A_{\{1,2,n\}}(x,y) = 2(R(x,y)-1-xy-x^2y^2 - x^2y) + \frac{(xy)^3}{1-xy} + A_{\{1,2\}}(x,y)$, we can obtain the formula in the statement of the theorem.
\end{proof}

\begin{example}
Consider the meander $\frac{9}{2|1|2|1|2|1}$ pictured below

    \begin{center}
\begin{tikzpicture}[ scale =.8,
    dot/.style={circle, fill, inner sep=1.5pt},
    arc/.style={line width=0.6mm}
]

    \draw[arc, red] (0,0) to[bend left=45] (8,0);
    \draw[arc, red] (1,0) to[bend left=45] (7,0);
    \draw[arc, red] (2,0) to[bend left=45] (6,0);
\draw[arc, blue] (3,0) to[bend left=45] (5,0);

    \draw[arc, red] (0,0) to[bend right=45] (1,0);
    \draw[arc, blue] (3,0) to[bend right=45] (4,0);
    \draw[arc, red] (6,0) to[bend right=45] (7,0);

    \node[dot] at (0,0) {};
    \node[dot] at (1,0) {};
    \node[dot] at (2,0) {};
    \node[dot] at (3,0) {};
    \node[dot] at (4,0) {};
    \node[dot] at (5,0) {};
    \node[dot] at (6,0) {};
    \node[dot] at (7,0) {};
    \node[dot] at (8,0) {};

    \end{tikzpicture}
    \end{center}

    In this case, the path containing $v_1$ and $v_9$ is in red. It begins with $v_9$ because the meander is of the form $2^1|1|\beta|2^1|1$, where here $\beta = 2|1$. Upon removing the red path, we are always left with a meander of the form $\frac{9-6}{\beta}$, in this case $\frac{3}{2|1}$.
\end{example}


\section{Relationship to permutations with restricted displacement}\label{sec: bijection}

In this section we establish a bijection between (half of the) indecomposable meanders with restricted part sizes and permutations with restricted displacements. Such permutations have been studied and enumerated in \cite{B10}. When the meanders are acyclic, we show how to recover the index of the meander from a simple formula on a statistic of the associated permutation. 
In this section, we use $\bar{\mathcal{I}}_n$ to denote those meanders $\frac\lambda\mu\in\mathcal{I}_n$ that have $\lambda_1>\mu_1.$ Notice that $|\bar{\mathcal{I}}_n|=\frac{1}{2}|\mathcal{I}_n|.$

We start with some notation. Let $\S_n$ denote the set of permutations on $[n]=\{1,2,\ldots,n\}$ and we write these permutations in their one-line notation as $\pi = \pi_1\pi_2\ldots\pi_n$ where $\pi_i:=\pi(i)$. The \emph{displacement} of $\pi_i$ is $\pi_i-i$, and we use $d(\pi)$ for the displacement list of $\pi\in\S_n$ such that $d(\pi)=(\pi_1-1, \pi_2-2, \ldots, \pi_n-n)$. We use $\Pi_{n}^J$ to denote the set of permutations $\pi$ of $[n]$ so that $d(\pi)$ consists only of elements from $J$. 

\begin{lemma}\label{lem:displacement}
    For $n\geq 1$, if $\pi \in \Pi_n^{J}$ with $J=\{-2,-1,j-2\}$, and $d(\pi) = (d_1,\ldots, d_n),$ then $d_1=j-2$. If $r$ is the smallest integer greater than 1 so that $d_r=j-2,$ then 
    \begin{itemize}
        \item $r\leq j,$
        \item $d_i=-1$ for $2\leq i \leq r-1$, and $d_\ell=-2$ for $r+1\leq \ell \leq j$, and 
        \item  $d'=(d_1',\ldots,d'_{n-r+1})$, obtained by taking $d_1'=j-2,$ $d_i'=-1$ for $2\leq i\leq j-r+1$, and $d_i'=d_{i-r+1}$ for $i>j-r+1,$ is a displacement list for some $\pi'\in\Pi_{n-r+1}^J,$
    \end{itemize}
\end{lemma}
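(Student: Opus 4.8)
The plan is to work throughout with the reformulation that a sequence $(d_1,\ldots,d_n)$ with entries in $J$ is the displacement list of a permutation of $[n]$ exactly when the map $i\mapsto \pi_i=i+d_i$ is a bijection of $[n]$; the useful consequence is that for each $m$ the value set $\{1,\ldots,m\}$ must be served by exactly $m$ positions. Writing $c=j-2$, the first claim is immediate: since $\pi_1\geq 1$ we have $d_1=\pi_1-1\geq 0$, and $c$ is the only nonnegative element of $J=\{-2,-1,c\}$, so $d_1=c=j-2$ and $\pi_1=j-1$.

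The engine of the whole argument is a single counting observation, which I would isolate first. Suppose positions $2,3,\ldots,m$ all have displacement in $\{-1,-2\}$ (this holds for every $m\leq r-1$, since $r$ is by definition the first index exceeding $1$ with $d_r=c$). Each such position serves a distinct value that is $\geq 1$ and $\leq m-1$, so these $m-1$ positions serve exactly the set $\{1,\ldots,m-1\}$. Since position $1$ already serves $c+1=j-1$, we must have $j-1\notin\{1,\ldots,m-1\}$, i.e.\ $m\leq j-1$. Applying this with $m=r-1$ gives $r\leq j$ (and, taking $m=n$, shows that such an $r$ must exist once $n\geq j$; the only way it can fail is the degenerate rotation with $n=j-1$). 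Moreover, comparing the value sets for consecutive $m$ forces position $m$ to serve value $m-1$, i.e.\ $d_m=-1$ for all $2\leq m\leq r-1$, which is the first half of the middle bullet.

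For the block $d_{r+1},\ldots,d_j=-2$ I would run a ``smallest unserved value'' induction. After positions $1,\ldots,r$ the values served are $\{1,\ldots,r-2\}\cup\{j-1\}\cup\{r+j-2\}$, so the smallest value not yet served is $r-1$. Any value $v$ with $r-1\leq v\leq j-2$ cannot be reached by a displacement of $c$ (that would require a position $\leq 0$), so it is served at position $v+1$ or $v+2$; since the earlier of these is already occupied, $v$ is forced to position $v+2$ with displacement $-2$. Iterating from $v=r-1$ up to $v=j-2$ yields $d_\ell=-2$ for $r+1\leq\ell\leq j$ (an empty range when $r=j$).

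Finally, for the reduction I would record that positions $1,\ldots,j$ of $\pi$ serve precisely $\{1,\ldots,j-1\}\cup\{r+j-2\}$, the single ``large'' value $r+j-2$ sitting at position $r$. I would then define $\pi'$ on $[n-r+1]$ by a shorter initial staircase, $\pi'_1=j-1$ and $\pi'_i=i-1$ for $2\leq i\leq j-r+1$, followed by the untouched free tail $d_{j+1},\ldots,d_n$ copied verbatim, reindexed as $d'_i=d_{i+r-1}$ for $i>j-r+1$. The verification is the heart of the matter: for a tail position one computes $\pi'_{i'-r+1}=\pi_{i'}-(r-1)$, so the positions past the staircase serve $\bigl(\{j,\ldots,n\}\setminus\{r+j-2\}\bigr)-(r-1)=\{j-r+1,\ldots,n-r+1\}\setminus\{j-1\}$, while the staircase serves $\{1,\ldots,j-r\}\cup\{j-1\}$; these are complementary in $[n-r+1]$, so $\pi'$ is a genuine permutation, and all of its displacements lie in $J$ by construction, giving $\pi'\in\Pi_{n-r+1}^J$. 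I expect this last bookkeeping — pinning down exactly which values the initial block absorbs and checking that the global shift by $r-1$ restores a bijection — to be the main obstacle, together with the minor point of confirming that $r$ exists so that the statement is not vacuous.
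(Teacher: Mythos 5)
Your proof is correct and takes essentially the same approach as the paper's: both arguments pin down the initial segment by counting which values the positions with displacement in $\{-1,-2\}$ can serve (pigeonhole for $r\leq j$, then forcing $d_i=-1$ and $d_\ell=-2$ successively), and both realize the third bullet as deletion of the prefix $(j-1)\,1\,2\cdots(r-2)$ followed by a shift by $r-1$, which turns exactly the $-2$'s in positions $r+1,\ldots,j$ into $-1$'s. You also correctly read the tail reindexing as $d'_i=d_{i+r-1}$ (consistent with the paper's worked example), and your explicit verification that the staircase and shifted tail serve complementary value sets is, if anything, more careful than the paper's.
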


\begin{proof}
    First notice that $\pi_1\geq 1,$ and so $d_1\geq 0.$ Since here, we require $d_i\in\{-2,-1,j-2\}$, we must have $d_1=j-2.$ 

    Now let $d_r$ be the next occurrence of $j-2$ after $d_1.$ That is, $d_r=j-2$ and $d_i\in\{-1,-2\}$ for $1< i<r$. To prove the first bullet, suppose for the sake of contradiction that $r>j+1,$. Among $\{d_i+i : 1\leq i \leq j$, there would be two equal elements (since all elements are less than $j$), and thus the elements $d_1,\ldots,d_n$ could not have come from the displacement list of a permutation. 

    To prove the second bullet point, first suppose $r=2.$ In that case, we must have had $\pi_1\pi_2 = (j-1)j$. Since all displacements are in $\{-2,-1,j-2\}$, we must have that $d_\ell=-2$ for $3\leq \ell\leq j$ in order for the elements in $\{1,\ldots, j-2\}$  to appear in $\pi$. If $r>2,$ then the elements of $\pi$ after position $r$ must all be greater than or equal to $r-1$. Thus we must have the elements $\{\pi_2,\ldots, \pi_{r-1}\}=\{1,\ldots, r-2\}.$ Since the only options for $\pi_i-i$ are $\{-1,-2\},$ they must all in fact be $-1.$ Furthermore, after $\pi_r$, we must similarly have all the elements less than $j-1$ appear first and so $d_\ell=-2$ for $r+1\leq \ell\leq j.$

    Finally, let us prove the third bullet point. This states that we can delete $d_1,\ldots,d_{r-1}$, and shift elements appropriately to still have a valid displacement list. Indeed, by deleting $(j-1)12\ldots (r-2)$ from the permutation $\pi$, we subtract $r-1$ from all elements greater than $j-1$ and $r-1$ from the position of all such elements. The only elements affected differently are those $d_i$ with $2<i<j$. In this case, the elements that were previously $-2$ are now $-1$ since the position was changed by $r-1$ but the values of $\pi$ were changed by $r-2.$ 
\end{proof}

Let us see an example to demonstrate the last bullet point of Lemma~\ref{lem:displacement}.

\begin{example}
    Let $j=5$ and thus $J = \{-2,-1,3\}$.  One example of a permutation in $\Pi_{10}^J$ is $\pi = 412735(10)689$, which has $d(\pi) = (3,-1,-1,3,-2,-1,3,-2,-1,-1).$
    By the last bullet point of  Lemma~\ref{lem:displacement}, we can note that $r=4$ and take $d'=(3,-1,-1,3,-2,-1,-1)$. This is exactly $d(\pi')$ for $\pi' = 4127356 \in \Pi_7^J.$
\end{example}

\begin{theorem}\label{thm:bijection}
    Let $j\geq 2$. 
    Then for $n>j$ there exists a bijection between $\bar{\mathcal{I}}_n^{\{1,j\}}$ and $\Pi_{n-1}^{\{-2,-1,j-2\}}$. 
\end{theorem}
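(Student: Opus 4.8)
The plan is to build the bijection recursively, by induction on $n$, peeling a single controlled fragment off a meander and off a permutation in lockstep. First I would record the shape of a general element of $\bar{\mathcal{I}}_n^{\{1,j\}}$: because $\lambda_1>\mu_1$ and the only admissible parts are $1$ and $j$, we are forced to have $\lambda_1=j$ and $\mu_1=1$, and the indecomposability condition $PS(\lambda)\cap PS(\mu)=\{n\}$ forces $\mu$ to contain a part equal to $j$, say $\mu=1^\ell|j|\cdots$ with a unique $\ell$ satisfying $1\le \ell\le j-1$ (if $\ell\ge j$ then $j\in PS(\mu)\cap PS(\lambda)$, a contradiction). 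This $\ell$ is the meander-side analogue of the ``first return'' position $r$ that Lemma~\ref{lem:displacement} attaches to a permutation in $\Pi_m^{\{-2,-1,j-2\}}$, and I would aim to match them via $\ell=r-1$, so that each peeling step removes exactly $r-1$ nodes from both sides.

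On the permutation side Lemma~\ref{lem:displacement} already performs the peel: it locates $r\in\{2,\dots,j\}$, strips the forced initial block (whose displacements are $j-2$ followed by $r-2$ copies of $-1$), and returns a canonical $\pi'\in\Pi_{(n-1)-(r-1)}^{\{-2,-1,j-2\}}=\Pi_{n-r}^{\{-2,-1,j-2\}}$ after re-padding the front with a block of length $j-r+1$. On the meander side I would realize the matching move through Theorem~\ref{thm: shiftStart}: starting from $\frac{j|\alpha}{1^\ell|j|\beta}$, the relevant initial fragments share an $m$-connectivity word, so the head can be shortened by $\ell=r-1$ nodes to produce a well-defined indecomposable meander in $\bar{\mathcal{I}}_{n-r+1}^{\{1,j\}}$; these are exactly the connectivity-word identities underlying cases (3) and (4) of Lemma~\ref{lem: acyclicRecursive}, and I would split the verification into $\ell\ge\lfloor j/2\rfloor$ and $\ell<\lfloor j/2\rfloor$ accordingly. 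Since $\bar{\mathcal{I}}_{n-r+1}^{\{1,j\}}$ corresponds to $\Pi_{n-r}^{\{-2,-1,j-2\}}$ by the inductive hypothesis, matching the two peels closes the induction. I would anchor it at $n=j$, where each side is a singleton (the meander $\frac{j}{1^j}$ and the permutation $(j-1)\,1\,2\cdots(j-2)$), and note that both sides are empty for $n<j$, so the recursion is consistent for all $n>j$.

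The hard part will be checking that the meander peel is the exact mirror of the permutation peel, re-padding included. Concretely I must verify that shortening the head preserves the connectivity structure of the remainder---so that indecomposability (and, for acyclic $\{1,j\}$, acyclicity) survives and the node count drops by precisely $r-1$---and that running the construction backwards recovers the excised prefix $1^{r-1}|j$ uniquely, with no off-by-one mismatch between the ranges of $\ell$ and $r$. The most delicate point is the interaction of the re-padding with the constraint it imposes on the next peel: after one step the leading configuration is altered in a way that restricts the admissible value of the subsequent first-return parameter, and I must confirm that the meander side obeys the identical restriction. Establishing that the forward map $\frac{\lambda}{\mu}\mapsto(r,\text{peeled meander})$ and the backward un-peeling are mutually inverse, case by case in $\ell$, is where essentially all the work lies.
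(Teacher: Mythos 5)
Your strategy is genuinely different from the paper's. The paper defines two explicit global maps: $f$ reads the displacement list $d(\pi)$ left to right as a sequence of instructions for appending parts to $\lambda$ and $\mu$ (Definition~\ref{def:f}), and $g$ reconstructs $d(\pi)$ from the partial-sum sets $PS(\lambda)$, $PS(\mu)$ (Definition~\ref{def:g}); the peeling move of Lemma~\ref{lem:displacement} appears only as an internal tool, used inductively inside Lemma~\ref{lem:f works} to verify $|\lambda|=|\mu|$. You instead propose to \emph{define} the bijection by recursion on the peel, matching the meander parameter $\ell$ (number of leading $1$s in $\mu$ before its first $j$) with the first-return position $r$ via $\ell=r-1$. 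Your structural observations are correct: indecomposability does force $\mu=1^{\ell}|j|\cdots$ with $1\le\ell\le j-1$ for $n>j$, this matches the range $2\le r\le j$, and the base case $n=j$ is a singleton on both sides. One caution: you lean on Theorem~\ref{thm: shiftStart} and the connectivity-word identities of Lemma~\ref{lem: acyclicRecursive}(3)--(4) to justify the meander peel, but those are index-tracking devices and the case $\ell<\lfloor j/2\rfloor$ genuinely uses acyclicity. The theorem is stated for all $j\ge 2$ (including $j=6$), so your peel must not depend on them. Fortunately it need not: the peel $\frac{j|\alpha}{1^{\ell}|j|\beta}\mapsto\frac{j|\beta}{1^{j-\ell}|\alpha}$ is a purely composition-level operation, and a direct computation shows both source and image are indecomposable exactly when $PS(\alpha)\cap(\{\ell\}\cup(\ell+PS(\beta)))=\{n-j\}$, so no connectivity-word or acyclicity input is required.

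The genuine gap is the one you flag but do not resolve: the $\ell$-peel is \emph{not} surjective onto $\bar{\mathcal{I}}_{n-\ell}^{\{1,j\}}$. Its image is the stratum of meanders whose bottom composition begins with at least $j-\ell$ ones (since $\alpha$ may itself contribute further leading $1$s), mirroring the fact that the re-padded displacement list begins with $j-2$ followed by at least $j-r$ forced $-1$s, so the peeled permutation has first return $r'\ge j-r+2$. Consequently the naive induction ``$\bar{\mathcal{I}}_n\cong\bigsqcup_{\ell}\bar{\mathcal{I}}_{n-\ell}$'' does not close: the images for different $\ell$ are nested rather than disjoint copies of the full smaller set. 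To make your recursion work you must strengthen the inductive hypothesis to a stratified family of bijections --- for every $m$ and every $s$, between meanders of length $m$ with at least $s$ leading $1$s in the bottom and permutations of length $m-1$ with at least $s-1$ leading $-1$s after $d_1$ --- and check that each peel carries the exact-$\ell$ stratum onto the at-least-$(j-\ell)$ stratum compatibly on both sides. This is precisely the bookkeeping that the paper's Lemma~\ref{lem: acyclicRecursive} handles (at the level of counting) with the auxiliary quantities $b^{\ell}_{n,k}$ and the relation $b^{\ell}=b^{\ell-1}-c^{\ell-1}$. Until that stratified statement is formulated and verified, the proposal is a plan rather than a proof; with it, the approach goes through and yields a recursive characterization of the same bijection the paper constructs explicitly.
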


To prove Theorem \ref{thm:bijection}, we will establish the explicit bijection between $\Pi_{n-1}^{\{-2,-1,j-2\}}$ and $\bar{\mathcal{I}}_n^{\{1,j\}}$. To this end, we define two maps \[f: \Pi_{n-1}^{\{-2,-1,j-2\}}\longrightarrow \bar{\mathcal{I}}_n^{\{1,j\}}\quad \text{ and } \quad g:\bar{\mathcal{I}}_n^{\{1,j\}}\to \Pi_{n-1}^{\{-2,-1,j-2\}},\] and show they are inverses.

In order to define $f$, we will establish an algorithm that uses the displacement list, $d(\pi)$, of a permutation $\pi \in \Pi_{n-1}^{\{-2,-1,j-2\}}$ as a sequence of instructions to  build a pair of compositions, $\lambda$ and $\mu$, which in turn form an irreducible meander $\frac{\lambda}{\mu}\in\bar{\mathcal{I}}_n^{\{1,j\}}$. 

\begin{definition}\label{def:f} Here we define a map $f$ that takes in an element of $\Pi_{n-1}^{\{-2,-1,j-2\}}$ and returns a pair of compositions which we will later see in Lemma~\ref{lem:f works} correspond to an indecomposable meander in $\bar{\mathcal{I}}_n^{\{1,j\}}$.
Let $\pi \in \Pi_{n-1}^{\{-2,-1,j-2\}}$ and let $d(\pi) = \{d_1, \ldots, d_{n-1}\}$ be the displacement list. We will describe $f(\pi)$ via an algorithm. 

Note that $d_1=j-2$ by necessity, and we let $\lambda_1=j$ and $\mu_1=1$. We also initialize a state tracker \texttt{last\_j} to be $\lambda$. (At each step of the algorithm, \texttt{last\_j}  will be either $\lambda$ or $\mu$.) We proceed by reading remaining elements of the displacement list $d_2,\ldots,d_{n-1}$ from left to right. At step $i$:
\begin{itemize}
 \item If $d_i=j-2$, then append a part size of $j$ to the composition opposite the \texttt{last\_j} and update the \texttt{last\_j} to this composition,
        \item If $d_i=-1$, then append a part size of 1 to the composition opposite \texttt{last\_j},
        \item If $d_i=-2$, then do nothing. 
\end{itemize}
After we handle $d_{n-1}$, we add a 1 to the composition opposite the \texttt{last\_j}. 
We define $f$ by $f(\pi) = \frac{\lambda}{\mu}$.
\end{definition}

\begin{example}
    We demonstrate the algorithm for $f$ for a specific case with $n=9$ and $j=4$. Let $\pi=31264857 \in S_8$. We have $d(\pi) = (2, -1, -1, 2, -1, 2, -2, -1)$ and so $\pi \in\Pi_{8}^{\{-2,-1,2\}}$. Note $d_1=2$ and we start with $\lambda_1=4$, $\mu_1=1$, and $\texttt{last\_j}=\lambda$. Proceeding, we get
    \begin{multline*}
   \fbox{$\dfrac{4}{1}$}\xrightarrow[]{d_2=-1}\fbox{$\dfrac{4}{1|1}$}\xrightarrow[]{d_3=-1}\fbox{$\dfrac{4}{1|1|1}$}\xrightarrow[\texttt{last\_j}=\mu]{d_4=2}\fbox{$\dfrac{4}{1|1|1|4}$}\xrightarrow[]{d_5=-1}\fbox{$\dfrac{4|1}{1|1|1|4}$} \\ \xrightarrow[\texttt{last\_j}=\lambda]{d_6=2}\fbox{$\dfrac{4|1|4}{1|1|1|4}$} \xrightarrow[]{d_7=-2}\fbox{$\dfrac{4|1|4}{1|1|1|4}$}\xrightarrow[]{d_8=-1}\fbox{$\dfrac{4|1|4}{1|1|1|4|1}$}\xrightarrow[]{}\fbox{$\dfrac{4|1|4}{1|1|1|4|1|1}$}
    \end{multline*}
    where the last step is to add a 1 at the end of the composition opposite \texttt{last\_j}, which in this case is $\mu$. Note that $\dfrac{4|1|4}{1|1|1|4|1|1} \in \bar{\mathcal{I}}_9^{\{1,4\}}$.
\end{example}

\begin{lemma}\label{lem:f works}
For $f$ as defined in Definition~\ref{def:f}, if $\pi \in \Pi_{n-1}^{\{-2,-1,j-2\}}$, then $f(\pi) \in \bar{\mathcal{I}}_n^{\{1,j\}}$. 
\end{lemma}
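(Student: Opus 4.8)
The goal is to check that $\frac{\lambda}{\mu}=f(\pi)$ meets every requirement for membership in $\bar{\mathcal{I}}_n^{\{1,j\}}$: all parts lie in $\{1,j\}$, $\lambda_1>\mu_1$, both $\lambda$ and $\mu$ are compositions of the same $n$, and $PS(\lambda)\cap PS(\mu)=\{n\}$. The first two are immediate, since the algorithm only ever appends parts of size $1$ or $j$, and the initialization $d_1=j-2\mapsto(\lambda_1,\mu_1)=(j,1)$ gives $\lambda_1=j>1=\mu_1$. The content is the balance $|\lambda|=|\mu|=n$ together with indecomposability, and I would prove both simultaneously by induction on $n$ using the recursive structure of Lemma~\ref{lem:displacement}. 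The base case is $n=j$: the constraint $d_1=j-2$ forces $\pi_1=j-1\le n-1$, so $n\ge j$, and when $n=j$ the only admissible displacement list is $(j-2,-1,\dots,-1)$, giving $f(\pi)=\frac{j}{1^j}$, which is balanced and indecomposable.

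For $n>j$ there is a second occurrence of $j-2$ (a displacement list with a single $j-2$ cannot be completed to a permutation beyond length $j-1$), so Lemma~\ref{lem:displacement} supplies $r$ with $2\le r\le j$ and a permutation $\pi'\in\Pi_{n-r}^{\{-2,-1,j-2\}}$; since $r\ge2$ the associated length $n-r+1$ satisfies $j\le n-r+1<n$, and by induction $f(\pi')\in\bar{\mathcal{I}}_{n-r+1}^{\{1,j\}}$. The crucial point is that the algorithm's moves depend only on the current \texttt{last\_j} and the displacement being read, not on the accumulated contents of $\lambda,\mu$. Tracing the first $j$ steps shows that $f(\pi)$ begins processing the suffix $d_{j+1},\dots,d_{n-1}$ in the state $\big(\lambda=j,\ \mu=1^{r-1}|j,\ \texttt{last\_j}=\mu\big)$, while $f(\pi')$ begins processing the \emph{same} suffix in the state $\big(\lambda=j,\ \mu=1^{j-r+1},\ \texttt{last\_j}=\lambda\big)$. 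Because the two runs read identical displacements from opposite \texttt{last\_j} values, the suffix appends to $\lambda$ in one run exactly what it appends to $\mu$ in the other. Writing $f(\pi')=\frac{j|A}{1^{j-r+1}|B}$, this gives $f(\pi)=\frac{j|B}{1^{r-1}|j|A}$; then $j+|A|=(j-r+1)+|B|=n-r+1$ yields $|\lambda|=j+|B|=n$ and $|\mu|=(r-1)+j+|A|=n$.

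For indecomposability I would work entirely with partial sums: $PS(\lambda)=\{j\}\cup(j+PS(B))$ and $PS(\mu)=\{1,\dots,r-1\}\cup\{r-1+j\}\cup\big((r-1+j)+PS(A)\big)$. The block $\{1,\dots,r-1\}$ lies below $j$ and the singleton $\{j\}$ lies strictly below $r-1+j$ (as $r\ge2$), so the only possible proper coincidence is between $j+PS(B)$ and $(r-1+j)+PS(A)$, i.e.\ between $PS(B)$ and $\{r-1\}\cup\big((r-1)+PS(A)\big)$. The inductive hypothesis $PS(j|A)\cap PS(1^{j-r+1}|B)=\{n-r+1\}$ translates exactly into the two facts I need: that $y-x=r-1$ with $x\in PS(A),\,y\in PS(B)$ occurs only for $x=\max PS(A),\,y=\max PS(B)$, and that $r-1\notin PS(B)$ (otherwise $j$ would be a proper common partial sum of $f(\pi')$). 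Together these force $PS(\lambda)\cap PS(\mu)=\{n\}$.

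The main obstacle is the structural claim in the middle paragraph: rigorously justifying that running the algorithm on a shared suffix of the displacement list from swapped \texttt{last\_j} states produces top--bottom mirror images, which is what yields the clean identity $f(\pi)=\frac{j|B}{1^{r-1}|j|A}$. The remaining work is careful bookkeeping with shifted partial-sum sets, and one must separately treat the degenerate case $n=j+r-1$, where $A$ is empty, $f(\pi')=\frac{j}{1^j}$, and $f(\pi)=\frac{j|1^{r-1}}{1^{r-1}|j}$, since the partial-sum argument tacitly assumes $n-r+1>j$.
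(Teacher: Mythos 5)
Your proposal is correct, and its two halves relate to the paper's proof differently. For the balance $|\lambda|=|\mu|=n$ you and the paper run the same induction on the reduction of Lemma~\ref{lem:displacement}: your identity relating $f(\pi)=\frac{j|B}{1^{r-1}|j|A}$ to $f(\pi')=\frac{j|A}{1^{j-r+1}|B}$ is exactly the paper's surgery (``delete the leading $1$s of $\mu$, break $\lambda_1=j$ into $1$s and delete the same number'') made explicit, and the mirror-image claim you flag as the main obstacle is true for precisely the reason you give --- the algorithm's action depends only on \texttt{last\_j} and the symbol read, so it commutes with swapping $\lambda\leftrightarrow\mu$ together with the state tracker, and after step $j$ the two runs sit in swapped states reading the identical suffix. (The paper additionally gets $|\lambda|+|\mu|=2n$ for free from $\sum_i d_i=0$ by tallying each symbol's contribution; you do not need this, since your induction delivers both totals at once.) Where you genuinely diverge is indecomposability: the paper gives a short non-inductive argument --- a minimal proper common partial sum $m=\lambda_1+\cdots+\lambda_s=\mu_1+\cdots+\mu_t$ with $\{\lambda_s,\mu_t\}=\{1,j\}$ would force a prefix sum $d_1+\cdots+d_p=-1$, impossible because prefix sums of a permutation's displacement list are nonnegative --- whereas you propagate indecomposability through the same induction by partial-sum bookkeeping, using the inductive hypothesis to rule out $r-1\in PS(B)$ and non-maximal solutions of $y-x=r-1$. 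Your route costs more case analysis (including the degenerate case $n=j+r-1$, which you correctly isolate) but yields a self-contained recursive description of $f$ that is the meander-side shadow of Lemma~\ref{lem:displacement}; the paper's route is shorter and isolates the single permutation-theoretic fact actually being used. I see no gap in your outline.
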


\begin{proof}
To complete this proof, it suffices to prove that $\lambda$ and $\mu$ are both compositions of $n$ and that their sets of partial sums $PS(\lambda)$ and $PS(\mu)$ have trivial intersection (that is, that the intersection is $\{n\}$). 

To show that $\lambda$ and $\mu$ are both compositions of $n$, let us first show that $|\lambda|+|\mu|=2n$, and then that $|\lambda|=|\mu|.$ For the former statement, notice that 
\begin{itemize}
    \item each occurrence of $j-2$ in $d(\pi)$ contributes $j$ to the sum $|\lambda|+|\mu|$, except for the first occurrence which contributes an extra 1
    \item each occurrence of $-1$ in $d(\pi)$ contributes 1 to the sum $|\lambda|+|\mu|$,
    \item each occurrence of $-2$ contributes 0 to the sum $|\lambda|+|\mu|$,
    \item and there is an additional 1 added to the sum $|\lambda|+|\mu|$ at the end of the algorithm.
\end{itemize}
Taken altogether, this implies that if $d(\pi) = (d_1,\ldots, d_{n-1}),$ then  \[|\lambda|+|\mu| = 2+ \sum_{i=1}^{n-1} (d_i+2).\]
Since $\sum d_i=0$ (since $\pi$ is a permutation), we have that $|\lambda|+|\mu|=2+2(n-1) = 2n$.

Now, to prove that $|\lambda|=|\mu|$, we will proceed by strong induction.  For the base case, we have the smallest permutation with displacement list $J$, which happens when $n=j-1$, which is $(j-1)12\ldots(j-2)$, which maps to $\frac{j}{1^j}.$ Clearly $|\lambda|=|\mu|$ in this case. Note that this is the only permutation $\pi$ with displacement list $J$ that has only one displacement equal to $j-2$.
Now suppose that for all $n<N,$ the algorithm produces compositions $\lambda'$ and $\mu'$ of the same size. Suppose we apply this algorithm  to a permutation $\pi\in\Pi_{N-1}^J$ and we obtain $\lambda$ and $\mu$. Notice that since $\pi\in\Pi_{N-1}^J$, its displacement list $d(\pi)$ must satisfy the conditions of Lemma~\ref{lem:displacement}. In particular, the last bullet says that there is a smaller permutation $\pi'\in\Pi_{n-r+1}$ with $d(\pi')$ obtained from $d(\pi)$ by deleting the first $r-1$ elements of $d(\pi)$, and changing all the required $-2$s from the second bullet point to $-1$s. In the image of $\pi'$ under $f$, this corresponds to taking $f(\pi) = \frac{\lambda}{\mu}$ and obtaining $f(\pi') = \frac{\mu'}{\lambda'}$ obtained by deleting all leading 1s of $\mu$ to get $\mu'$, and breaking $\lambda_1=j$ into $j$ 1s, and deleting an equal number of 1s. Since an equal amount has been deleted from $\lambda$ and $\mu$, and by induction $|\lambda'|=|\mu'|,$ we must  have had that $|\lambda|=|\mu|.$
Note that each time you perform this action, we reduce the number of $j-1$ in the displacement list by 1 and we reduce the number of $j$'s in the meander $f(\pi)$ by 1. Thus by performing this action over and over, we will end up back in the base case. 

Finally, we need to show that the set of partial sums of $\lambda$ and $\mu$ have trivial intersection (in order to show that the meander $\frac{\lambda}{\mu}$ is indecomposable). 
For the sake of contradiction, let $m<n$ be the minimal element in $PS(\lambda)\cap PS(\mu).$ Let us say $m=\lambda_1+\cdots + \lambda_s = \mu_1+\cdots + \mu_t$. Since $m$ is minimal, we must have that $\{\lambda_s,\mu_t\} = \{1,j\}$ (i.e., they are not equal and one of them is $j$). 
Without loss of generality, suppose $\lambda_s=j$ and $\mu_t=1.$ Then in the step where $j$ is added, we switch \texttt{last\_j} to $\lambda$ and so $\mu_t$ is added after $\lambda_s,$ say at the $p$-th step. Let us consider what must be true about $d(\pi)$ up until the $p$th element. Note that by the argument above, we must have that the partial sum of $d_1+\ldots + d_{p-1}=0$ since $\lambda_1+\cdots +\lambda_s=\mu_1+\cdots + \mu_t$. But since $\mu_t=1$, we must have $d_p=-1$, and thus  $d_1+\cdots + d_{p}=-1$, which is impossible if $\pi$ is a permutation. Thus $f(\pi)$ results in an indecomposable meander. 
\end{proof}

Now let us define $g$, which we eventually prove is the inverse of $f$. 

\begin{definition}\label{def:g}
We define $g$ such that a pair of compositions $\lambda, \mu$ with $\frac{\lambda}{\mu}\in\bar{\mathcal{I}}_n^{\{1,j\}}$ are inputted and a list of integers $d_1,\ldots, d_{n-1}$, with $d_i \in\{-2,-1,j-2\}$ for each $i$ is outputted. We will later show this is exactly the displacement list $d(\pi)$ a permutation $\pi\in\Pi_{n-1}^{\{-2,-1,j-2\}}$. Note that by assumption, $\lambda_1=j$ and $\mu_1=1$.

First, we construct the sets of partial sums of $\lambda$ and $\mu$, which we respectively denote $PS(\lambda)$ and $PS(\mu)$. For each $m \in[n-1]$ with $m\not\in PS(\lambda)\cup PS(\mu)$, set $d_{m+1}=-2$. Additionally set $d_1 = j-2$, the state tracker \texttt{last\_j} to $\lambda$, and mark the elements $\lambda_1=j$ and $\mu_1=1$ as read. Then for each $2\leq i \leq n-1$ with $d_i$ unassigned (i.e.~with $d_i\neq -2$), we will make an assignment:
\begin{itemize}
\item if the next unread element of the composition opposite \texttt{last\_j} is 1, set the earliest unassigned displacement to $-1$,
\item if the next unread element of the composition opposite \texttt{last\_j} is $j-2$, set the earliest unassigned displacement to $j-2$, and change the state tracker to the opposite composition.
\end{itemize}
The process is complete when we reach the last 1. The displacement list we obtain will allow us to reconstruct a permutation $\pi$ (by adding $i$ to $d_i$), and we set $g(\frac{\lambda}{\mu}) = \pi$. 
\end{definition}

\begin{example}
    We demonstrate the inverse by reconstructing the displacement list from the previous example.
 Consider $\dfrac{4|1|4}{1|1|1|4|1|1} \in \bar{\mathcal{I}}_9^{\{1,4\}}$. Then $PS(\lambda)=\{4,5,9\}$ and $PS(\mu)=\{1,2,3,7,8,9\}$. The only element $m\in [n-1]$ not present in the union of the partial sums is 6, so we set $d_7=-2$. We then walk through the compositions according to the algorithm building a list $d$.
 \[
 \setlength{\fboxsep}{4pt}
\renewcommand{\arraystretch}{1.3}
\begin{array}{rccc}
 & \fbox{$d = \underline{\quad}\ \underline{\quad}\ \underline{\quad}\ \underline{\quad}\ 
\underline{\quad}\ \underline{\quad}\ \underline{-2}\ \underline{\quad}$}
& \xrightarrow[\texttt{last\_j}=\lambda]{\lambda_1=4} &
\fbox{$d = \underline{2}\ \underline{\quad}\ \underline{\quad}\ \underline{\quad}\ 
\underline{\quad}\ \underline{\quad}\ \underline{-2}\ \underline{\quad}$}
\\[1em]
 \xrightarrow[]{\mu_2=1} &
\fbox{$d = \underline{2}\ \underline{-1}\ \underline{\quad}\ \underline{\quad}\ 
\underline{\quad}\ \underline{\quad}\ \underline{-2}\ \underline{\quad}$} & 
\xrightarrow[]{\mu_3=1} & 
\fbox{$d = \underline{2}\ \underline{-1}\ \underline{-1}\ \underline{\quad}\ 
\underline{\quad}\ \underline{\quad}\ \underline{-2}\ \underline{\quad}$}
\\[1em]
 \xrightarrow[\texttt{last\_j}=\mu]{\mu_4=4} &
\fbox{$d = \underline{2}\ \underline{-1}\ \underline{-1}\ \underline{2}\ 
\underline{\quad}\ \underline{\quad}\ \underline{-2}\ \underline{\quad}$} & 
\xrightarrow[]{\lambda_2=1} & 
\fbox{$d = \underline{2}\ \underline{-1}\ \underline{-1}\ \underline{2}\ 
\underline{-1}\ \underline{\quad}\ \underline{-2}\ \underline{\quad}$}
\\[1em]
\xrightarrow[\texttt{last\_j}=\lambda]{\lambda_3=4} &
\fbox{$d = \underline{2}\ \underline{-1}\ \underline{-1}\ \underline{2}\ 
\underline{-1}\ \underline{2}\ \underline{-2}\ \underline{\quad}$}
& \xrightarrow[]{\mu_5=1} & 
\fbox{$d = \underline{2}\ \underline{-1}\ \underline{-1}\ \underline{2}\ 
\underline{-1}\ \underline{2}\ \underline{-2}\ \underline{-1}$}
\end{array}
\]

Note the final 1 in $\mu$ correlates to the final forced step in the algorithm for $f$ and therefore does not add anything to the displacement list we create here. 
Finally, we use the list $(d_1,\ldots, d_{n-1})$ constructed to obtain $\pi=31264857 \in \Pi^{\{-2,-1,2\}}_8$ by taking $\pi_i = d_i+i$.

\end{example}

\begin{lemma}\label{lem:g works}
For $g$ as defined in Definition~\ref{def:g}, if $\frac{\lambda}{\mu} \in \bar{\mathcal{I}}_n^{\{1,j\}}$, then $g(\frac{\lambda}{\mu}) \in \Pi_{n-1}^{\{-2,-1,j-2\}}$.
\end{lemma}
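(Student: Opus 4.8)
The plan is to show that the output $g(\frac{\lambda}{\mu})$ is a genuine displacement list of a permutation of $[n-1]$ and that every entry lies in $\{-2,-1,j-2\}$. The natural strategy is to prove that $f$ and $g$ are mutually inverse at the level of the algorithms, so that $g$ simply reverses the construction in Definition~\ref{def:f}; since we already know by Lemma~\ref{lem:f works} that $f$ lands in $\bar{\mathcal{I}}_n^{\{1,j\}}$ and produces valid displacement lists, it suffices to verify that the instructions of $g$ reconstruct exactly the sequence $d_1,\ldots,d_{n-1}$ that $f$ reads. Concretely, I would first observe that the $-2$ entries are precisely those positions $m+1$ with $m\in[n-1]\setminus(PS(\lambda)\cup PS(\mu))$; this is forced by the fact that in $f$ a $-2$ is the unique instruction that appends nothing to either composition, so the running partial sum of $\lambda$ together with $\mu$ is unchanged exactly at those indices.

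Next I would verify that the remaining (non-$(-2)$) entries produced by $g$ are all in $\{-1,j-2\}$ and that they are consistent with being a displacement list, i.e.\ that the associated sequence $\pi_i = d_i + i$ is a permutation of $[n-1]$. The cleanest way is to track the quantity $d_1+\cdots+d_i$ as the difference of partial sums read off the two compositions: each time $g$ reads a $1$ opposite \texttt{last\_j} it emits $-1$, each time it reads a $j$ it emits $j-2$ and toggles \texttt{last\_j}. One checks that after $i$ steps the partial sum $\sum_{\ell\le i} d_\ell$ equals the signed difference between how far we have read into $\mu$ versus $\lambda$ (or vice versa), so the partial sums stay within the range forced by a permutation, and the full sum is $0$ because $|\lambda|=|\mu|=n$ gives us that both compositions are read completely. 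Since the $d_i$ take only the values $-2,-1,j-2$, once the partial-sum/injectivity bookkeeping is in place the map $i\mapsto d_i+i$ is an injection of $[n-1]$ into itself, hence a permutation, and lies in $\Pi_{n-1}^{\{-2,-1,j-2\}}$.

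The main obstacle I anticipate is the bookkeeping needed to confirm that $g$ terminates correctly and that every non-$(-2)$ position is actually assigned a value before we ``reach the last 1.'' In particular one must argue that the algorithm never tries to assign two displacements to the same index and never runs out of unassigned positions prematurely; this relies on the indecomposability hypothesis ($PS(\lambda)\cap PS(\mu)=\{n\}$) to guarantee the toggling of \texttt{last\_j} stays synchronized with the partial sums, mirroring the contradiction argument at the end of Lemma~\ref{lem:f works}. I would handle this by showing that at every stage the number of as-yet-unread $1$'s and $j$'s across the two compositions matches the number of unassigned non-$(-2)$ slots, using $|\lambda|=|\mu|=n$ and the count $|\lambda|+|\mu|=2n$ together with the fact that the $-2$'s exactly account for the ``missing'' partial sums. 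Once this counting invariant is established, the conclusion that $g(\frac{\lambda}{\mu})\in\Pi_{n-1}^{\{-2,-1,j-2\}}$ follows, and in the subsequent step (not required here) one checks $f\circ g=\mathrm{id}$ and $g\circ f=\mathrm{id}$ to complete Theorem~\ref{thm:bijection}.
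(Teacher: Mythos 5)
Your opening reduction is circular: you propose to conclude that $g$ lands in $\Pi_{n-1}^{\{-2,-1,j-2\}}$ by checking that $g$ ``reconstructs exactly the sequence $d_1,\ldots,d_{n-1}$ that $f$ reads,'' but for an arbitrary $\frac{\lambda}{\mu}\in\bar{\mathcal{I}}_n^{\{1,j\}}$ there is, at this stage, no such sequence --- Lemma~\ref{lem:f works} only shows $f$ maps \emph{into} $\bar{\mathcal{I}}_n^{\{1,j\}}$, not onto it, and surjectivity of $f$ is exactly what the bijection theorem is still trying to establish. The paper therefore argues directly: it takes the list $(d_1,\ldots,d_{n-1})$ produced by $g$ and proves from scratch that $\pi_i=d_i+i$ defines a permutation of $[n-1]$.

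The substantive gap is in the injectivity step. You assert that once the partial sums $\sum_{\ell\le i}d_\ell$ are controlled and the total is $0$, the map $i\mapsto d_i+i$ ``is an injection,'' but bounded partial sums summing to zero do not imply injectivity (e.g.\ $d=(1,0,-1)$ has partial sums $1,1,0$ and total $0$, yet $d_i+i$ is constantly $2$). The paper's proof devotes most of its length to precisely this point: assuming $d_r+r=d_s+s$ with $r<s$, it runs through the cases $d_r\in\{-2,-1,j-2\}$ and $d_s\in\{-2,-1,j-2\}$, deriving in each a contradiction either from the spacing forced by the values (e.g.\ $r=s+j$ contradicts $r<s$), from the fact that a $-1$ records a partial sum while a following $-2$ records the absence of one, or from indecomposability ($PS(\lambda)\cap PS(\mu)=\{n\}$) when $d_r=j-2$ and $d_s=-1$. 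Separately, the bound $\pi_i\le n-1$ requires its own argument locating the last occurrence of $j-2$ relative to the tails of $\lambda$ and $\mu$; your sketch does not address this. Your closing paragraph worries about termination and slot-counting, which is a legitimate but secondary concern; the case analysis for $\pi_r\neq\pi_s$ is the missing core and cannot be absorbed into the partial-sum bookkeeping as written.
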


\begin{proof}
    By design the algorithm for $g$ will produce a displacement list of length $n-1$ consisting of elements from $\{-2,-1,j-2\}$. We need only show that by taking $\pi=\pi_1\ldots \pi_{n-1}$ with $\pi_i=d_i+i$, we obtain a permutation in $\S_{n-1}.$ To that end, we will show that $1\leq \pi_i\leq n-1$ for all $i$ and that $\pi_i\neq\pi_j$ if $i\neq j.$

    First, let us notice that $\pi_i\geq 1$ since $d_1$ is necessarily $j-2$ and $d_2$ cannot be $-2$ (since $1$ is always in $PS(\mu)$). Now, let us see that $\pi_i\leq n-1$. Indeed this can only be violated if $d_i=j-2$ for some $i>n-1-(j-2)$. Consider the last $j-2$ in $d_i$. This must correspond to either the last element of $\mu$ or of $\lambda$ (since they cannot both end in the same element as that would violate indecomposability). Without loss of generality, suppose $\mu$ ends with $j$ and $\lambda$ ends with $j1^\ell$ for some $\ell<j$. Notice that by the algorithm, since elements $r$ satisfying $n-j<r<n-\ell$ are not in $PS(\lambda)\cup PS(\mu)$, then $d_{r+1}=-2$ for those elements. Furthermore, the elements after that sequence of $-2$s must be $-1$ to account for the $\ell$ 1s (except the last one). Thus the latest $j-2$ must occur before position $n-1-(j-2).$

    Next let us see that $\pi_r\neq \pi_s$ if $r\neq s.$ For the sake of contradiction, suppose there is some $r\neq s$ for which this does happen, i.e. for which $d_r+r=d_s+s,$ and without loss of generality let us assume that $r<s.$ First suppose $d_r = -2.$ Then $d_s=-1$ or $j-2$. If $d_s = -2$, then $r-2=s-1$, which contradicts our assumption that $r<s.$ If $d_s=j-2$ , then $r-2=s+(j-2)$ and so $r=s+j$, contradicting $r<s.$

    Now assume $d_r=-1.$ Then $d_s\in\{-2,j-2\}.$ Note we cannot have $d_s = j-2$ since then we would have $r=s+j-1$ which violates $r<s$ since $j>1.$ Suppose instead that $d_s=-2.$ In this case, we have $r=s-1,$ implying that $-1$ occurs immediately before $-2$ in $d.$ 
    However, this does not happen since $d_r=-1$ implies that $r$ is in the partial sum of the composition opposite \texttt{last\_j} while $d_{r+1}=-2$ implies $r$ is not in $PS(\lambda)\cup PS(\mu),$ giving us another contradiction.  

    Finally, suppose $d_r = j-2$, in which case $d_s\in\{-2,-1\}.$ If $d_s = -2,$ then this means that $j-2+r = s-2$, and so $s = r+j$. Furthermore, since $d_s=-2,$ $s-1\not\in PS(\lambda)\cup PS(\mu).$ However, if $d_r = 2$ for $r>1,$ we must have $r+j-1$ in the partial sum of \texttt{last\_j} corresponding the the $j$ added at that $r$-th step, giving us a contradiction since $r+j=s$.
    If $d_s = -1,$ then we have $r+j-1=s$, and have that $r+j-1\in PS(\lambda)\cap PS(\mu)$, contradicting that $\frac{\lambda}{\mu}$ is indecomposable. 
\end{proof}

 \begin{theorem}
     The maps $f$ and $g$ defined above are inverses of each other. That is, for any $\pi\in\Pi_{n-1}^{\{-2,-1,j-2\}}$, we have $g(f(\pi)) = \pi$ and for any $\frac{\lambda}{\mu}\in \bar{\mathcal{I}}_n^{\{1,j\}}$, we have $f(g(\frac{\lambda}{\mu})) = \frac{\lambda}{\mu}$.
 \end{theorem}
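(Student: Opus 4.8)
The plan is to show that $f$ and $g$ run the same bookkeeping in opposite directions. Both algorithms sweep the positions $1,2,\ldots,n-1$ in increasing order while carrying the same state tracker \texttt{last\_j} (initialized to $\lambda$), and both change that tracker exactly at a part of size $j$, equivalently at a displacement equal to $j-2$. Since Lemma~\ref{lem:f works} and Lemma~\ref{lem:g works} already show that $f$ and $g$ land in $\bar{\mathcal{I}}_n^{\{1,j\}}$ and $\Pi_{n-1}^{\{-2,-1,j-2\}}$ respectively, it remains only to verify $g(f(\pi))=\pi$ and $f(g(\tfrac{\lambda}{\mu}))=\tfrac{\lambda}{\mu}$. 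I would prove both identities from a single synchronization argument, the two directions differing only in whether one reads off the displacement list or the interleaved parts of $\lambda$ and $\mu$.

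The crux is the placement of the $-2$ entries. Writing $\tfrac{\lambda}{\mu}=f(\pi)$ with $d(\pi)=(d_1,\ldots,d_{n-1})$, I would isolate and prove the claim that, for $2\le i\le n-1$,
\[
d_i=-2 \quad\Longleftrightarrow\quad i-1\notin PS(\lambda)\cup PS(\mu).
\]
This is precisely the rule $g$ uses to seed its $-2$'s, so the claim forces $g$ to recover the $-2$ positions of $\pi$ exactly, and, run the other way, forces the $-2$'s that $g$ produces to become the no-ops of $f$. To prove it I would track, as $f$ consumes $d_1,\ldots,d_i$, the running sums $a_i=|\lambda|$ and $b_i=|\mu|$ built so far together with the counts of $-2$'s and $(j-2)$'s seen: a $-1$ or a $(j-2)$ completes a fresh partial sum, whereas an append of size $j$ makes the partial sums of one composition jump past a block of integers, and the integers in such a block that are not partial sums of the opposite composition are exactly the values later receiving a $-2$. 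Packaging this as an invariant on the prefix $d_1,\ldots,d_i$ yields $(PS(\lambda)\cup PS(\mu))\cap[n-1]=\{\,i-1 : 2\le i\le n-1,\ d_i\neq -2\,\}\cup\{n-1\}$, which is the claim.

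Granting the $-2$ correspondence, the rest is a synchronized induction on the position. Since the trackers flip together at each $(j-2)$ and both algorithms visit the non-$(-2)$ positions in increasing order, at each such position a $-1$ matches reading (or appending) a part of size $1$ to the composition opposite \texttt{last\_j}, while a $(j-2)$ matches a part of size $j$ with a tracker flip; hence the trackers remain in lockstep, every non-$(-2)$ entry is reproduced, and symmetrically every part of $\lambda$ and $\mu$ is rebuilt. It then remains to match the terminal $1$ that $f$ appends at the end with the final $1$ that $g$ reaches but does not record, and to recall from Lemma~\ref{lem:g works} that $\lambda$ and $\mu$ never end in the same part, so that $n-1\in PS(\lambda)\cup PS(\mu)$ always and the out-of-range index $i=n$ never arises. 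The main obstacle is the claim of the previous paragraph: reconciling the jumps caused by the size-$j$ appends with the $-2$'s that fill in exactly the skipped non-partial-sums is the only step needing a genuine invariant rather than routine verification, and choosing that invariant correctly, given that $a_i$ and $b_i$ advance at different and uneven rates, is where the real work sits.
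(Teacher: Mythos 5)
Your proposal is correct and follows essentially the same route as the paper: both reduce the problem to showing that the $-2$ entries occur exactly at the positions $i$ with $i-1\notin PS(\lambda)\cup PS(\mu)$, and both verify this by tracking how the partial sums of $\lambda$ and $\mu$ accumulate as $f$ consumes the displacement list (the paper invokes the permutation property via $\pi_r\neq\pi_{r-j}$ where you package the same fact as an explicit prefix invariant). The remaining synchronization of the non-$(-2)$ steps is treated in both as routine undoing of one map by the other.
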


 \begin{proof}
    It is enough to show that the $-2$s must appear in the correct places, since all other steps clearly ``undo'' the steps of the other map. 

    Suppose $\pi \in \Pi_{n-1}^{\{-2,-1,j-2\}}$ with $d_i(\pi)=-2$ for $i\in I.$ When computing $f(\pi)$, we can keep track of $PS(\lambda)$ and $PS(\mu).$ Indeed, $d_1=j-2$ places $j\in PS(\lambda)$ and $1 \in PS(\mu)$ and when see $d_r=j-2$ for $r>1$ we include $r+j-1$ in $PS(\texttt{last\_j})$. If $d_r = -1,$ we have $r-1$ in $PS(\texttt{last\_j})$ and if $d_r=-2,$ you do not add anything to the partial sum. Since $\pi$ is a permutation and so $\pi_r\neq\pi_{r-j}$,  this means that $r-1$ is not in $PS(\lambda)\cup PS(\mu).$ Thus when computing $g(f(\pi))$, the original displacement is recovered. It can similarly be argued that $f(g(\frac{\lambda}{\mu})) = \frac{\lambda}{\mu}.$
 \end{proof}


Since we have a bijection between indecomposable meanders with parts in $\{1,j\}$ and permutations with displacements in $\{-2,-1,j-2\}$, a natural question arises: Is there a statistic on permutations that corresponds (via this bijection) to the index of the meander? We show in the next theorem that when the meander is acyclic, the index of the meander can be determined from a simple statistic on the corresponding permutation.

\begin{theorem}\label{thm: -1s}
    Let $\frac{\lambda}{\mu}\in \bar{\mathcal{I}}_n^{\{1,j\}}$ 
    with $j\in\{2,3,4,5,7\}$ (so that $\{1,j\}$ is acyclic). Then if $\pi = g(\frac{\lambda}{\mu})$, where $g$ is defined in Definition \ref{def:g},   \[\ind\bigg(\frac{\lambda}{\mu}\bigg)=\frac{N_{o}(\pi)}{2}+1\] where $N_{o}(\pi)$ is the number of odd integers in $d(\pi)$, the displacement list of  $\pi$. 
\end{theorem}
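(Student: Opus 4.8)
The plan is to compute both sides of the identity directly from the displacement list, bypassing any recursion. The starting point is the observation that an acyclic meander is a forest: every vertex lies on at most one upper and at most one lower arc, so the graph is a disjoint union of paths and cycles, and ``acyclic'' forces every component to be a path. Hence $\ind(\frac\lambda\mu)$ equals the number of components, which for a forest on $n$ vertices with $E$ edges is $n-E$. Since each part of size $1$ contributes no arc and each part of size $j$ contributes exactly $\lfloor j/2\rfloor$ nested arcs, we get $E=(a+b)\lfloor j/2\rfloor$, where $a$ and $b$ are the numbers of parts equal to $j$ in $\lambda$ and $\mu$ respectively. This reduces the problem to expressing $a+b$ and $N_o(\pi)$ in terms of simple counts on $d(\pi)$.

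Next I would translate the meander statistics into displacement statistics using the construction of $g$ (equivalently $f$). Writing $P,Q,R$ for the numbers of entries of $d(\pi)$ equal to $j-2$, $-1$, and $-2$, I claim $a+b=P$: every part of size $j$ in $\frac\lambda\mu$ is produced by exactly one instruction $d_i=j-2$ (the initial $\lambda_1=j$ corresponding to the forced $d_1=j-2$, and one new $j$ for each later $j-2$), while no other instruction creates a part of size $j$. The counts $P,Q,R$ are then pinned down by two linear relations: $P+Q+R=n-1$ (the length of $d(\pi)$) and $\sum_i d_i=0$ (as $\pi$ is a permutation), i.e.\ $(j-2)P-Q-2R=0$. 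Eliminating $R$ yields the single clean identity $Q=2n-2-jP$.

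Finally I would assemble the two sides according to the parity of $j$, which governs exactly which displacements are odd. Since $-2$ is always even and $-1$ is always odd, the only parity-sensitive value is $j-2$. When $j$ is even, $N_o(\pi)=Q=2n-2-jP$, so $\tfrac{N_o(\pi)}{2}+1=n-\tfrac{jP}{2}=n-P\lfloor j/2\rfloor=n-E=\ind(\frac\lambda\mu)$. When $j$ is odd the $(j-2)$'s are also odd, so $N_o(\pi)=Q+P=2n-2-(j-1)P$, giving $\tfrac{N_o(\pi)}{2}+1=n-\tfrac{(j-1)P}{2}=n-P\lfloor j/2\rfloor=n-E=\ind(\frac\lambda\mu)$. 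In both cases the acyclic hypothesis ($j\in\{2,3,4,5,7\}$, via Proposition~\ref{prop: 1jAcyclic}) is used only once, precisely to justify $\ind=n-E$.

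I expect the main obstacle to be the bookkeeping of the second step: carefully verifying that $g$ creates exactly one $j$-part per $j-2$ displacement and exactly the claimed number of $1$'s, so that $a+b=P$ holds with no off-by-one error, since the forced initial $\mu_1=1$ and the terminal $1$ appended at the end of the $f$-algorithm are the easy places to miscount. An alternative, more in the spirit of the surrounding section, would be to induct using the prefix-deletion reduction of Lemma~\ref{lem:displacement} and track the index change through Theorem~\ref{thm: shiftStart}; there one must split into the cases $r-1\ge\lfloor j/2\rfloor$ and $r-1<\lfloor j/2\rfloor$ coming from Lemma~\ref{lem: acyclicRecursive} and check that the index drop matches half the change in $N_o(\pi)$. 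The direct forest count above avoids that case analysis, so I would present it as the main argument.
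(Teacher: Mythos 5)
Your proposal is correct, and it verifies on both worked examples in the paper (e.g.\ for $\frac{4|1|4|1|1|1}{1|1|4|1|1|4}$ one gets $P=4$, $E=8$, $n-E=4$), but it takes a genuinely different route from the paper's proof. The paper also reduces to ``index $=$ number of path components'' via acyclicity, but then counts \emph{path endpoints}: it argues that every endpoint of a path (and every isolated node, twice) must be either a $1$ or the center node of an odd part, so the number of components is half the number of odd parts of $\lambda$ and $\mu$; since $g$ records every $1$ except the first and last as a $-1$ and every $j$ as a $j-2$ of the same parity, the odd-part count is $N_o(\pi)+2$ and the formula follows. You instead count \emph{edges}: components $=n-E$ for a forest, $E=P\lfloor j/2\rfloor$ with $P$ the number of $(j-2)$'s, and then the two linear constraints $P+Q+R=n-1$ and $\sum_i d_i=0$ pin down $Q=2n-2-jP$, after which the parity split on $j$ closes the computation. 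Your version buys a fully mechanical calculation that sidesteps the paper's somewhat informal classification of which nodes can serve as path endpoints (the one place its argument requires care), at the cost of being less transparent about \emph{why} odd displacements are the relevant statistic; the paper's endpoint argument makes that correspondence visible directly. Both proofs use the acyclic hypothesis in exactly one place, to conclude the meander graph is a disjoint union of paths. The only bookkeeping you flagged as a risk, $a+b=P$ and ``number of $1$'s $=Q+2$'', is indeed immediate from Definition~\ref{def:f}, and your two linear relations are consistent with $|\lambda|+|\mu|=2n$, so there is no off-by-one issue.
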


\begin{proof}
    For an acyclic meander, the index is exactly the number of paths or isolated nodes. Let us first see that the number of paths and isolated nodes is equal to half of the total number of odd parts in both $\lambda$ and $\mu$. Indeed, an isolated node in $\frac{\lambda}{\mu}$ must either correspond to a 1 in both $\lambda$ and $\mu$ (that line up in the meander) or to an odd element in $\lambda$ or $\mu$ whose center node lines up with another 1 or center node in the other composition. A path, on the other hand, must have a starting node and ending node. Similarly, a starting node (or ending node) can only be either the center node of an odd part of $\lambda$ or $\mu$, or a 1 in $\lambda$ or $\mu.$

    Now, recall that the map $g$ builds the displacement list of a permutation $\pi$ by including a $-1$ in $d(\pi)$ for each of the 1's in the pair of compositions except for the first 1 and the last 1. Thus the displacement list will have exactly two fewer $-1$'s than the compositions have 1's. Additionally $g$ includes $j-2$ in the displacement list for each $j$ in $\lambda$ or $\mu$. Note that $j$ and $j-2$ have the same parity. Thus, the number of odd elements that appear in $\lambda$ and $\mu$  must be equal to $N_o+2$, and so the index of $\frac{\lambda}{\mu}$ must equal $\frac{N_o}{2}+1.$
    \end{proof}

We demonstrate Theorem \ref{thm: -1s} for both even and odd $j$ in the following two examples. 

\begin{example}
    Consider $\pi=3 1 5 2 4 8 6 7 (11)9 (10) \in \Pi_{11}^{\{-2,-1,2\}}$ with \[d(\pi)=
    (2, -1, 2, -2, -1, 2, -1, -1, 2, -1, -1).\] By Theorem \ref{thm: -1s} the associated indecomposable meander $\frac{\lambda}{\mu}\in\bar{\mathcal{I}}_{12}^{\{1,4\}}$ should have index $\frac{6}{2}+1=4$. We confirm this by applying $f$ from Definition~\ref{def:f} to obtain the meander $\frac{\lambda}{\mu}=\frac{4|1|4|1|1|1}{1|1|4|1|1|4}$. The four paths that give us index 4 are highlighted below. 
\begin{center}
\begin{tikzpicture}[
    dot/.style={circle, fill, inner sep=1.5pt},
    arc/.style={line width=0.6mm}
]

    \draw[arc, blue] (1,0) to[bend left=45] (4,0);
    \draw[arc, red] (2,0) to[bend left=45] (3,0);
    \draw[arc, red] (6,0) to[bend left=45] (9,0);
    \draw[arc, green] (7,0) to[bend left=45] (8,0);
    \draw[arc, red] (3,0) to[bend right=45] (6,0);
    \draw[arc, blue] (4,0) to[bend right=45] (5,0);
    \draw[arc, red] (9,0) to[bend right=45] (12,0);
    \draw[arc, yellow!60!orange] (10,0) to[bend right=45] (11,0);

 \foreach \x in {1,...,12} {
        \node[dot, label=below:{$\x$}] at (\x,0) {};
    }
\end{tikzpicture}
\end{center}

\end{example} 

\begin{example} Consider $\pi=416235(10)789\in \Pi_{10}^{\{-2,-1,3\}}$ with \[d(\pi)=
(3,-1,3,-2,-2,-1,3,-1,-1,-1).\]  By Theorem \ref{thm: -1s} the associated indecomposable meander $\frac{\lambda}{\mu}\in\bar{\mathcal{I}}_{11}^{\{1,5\}}$ should have index $\frac{8}{2}+1=5$. We confirm this by applying $f$ from Definition~\ref{def:f} to obtain the meander $\frac{\lambda}{\mu}=\frac{5|1|5}{1|1|5|1|1|1|1}$. The four paths that give us index 5 are highlighted below. (Note the isolated point is a degenerate path and also contributes to the index.)

    \begin{center}
\begin{tikzpicture}[
    dot/.style={circle, fill, inner sep=1.5pt},
    arc/.style={line width=0.6mm}
])
    
    \draw[arc, blue] (1,0) to[bend left=45] (5,0);
    \draw[arc, red] (2,0) to[bend left=45] (4,0);
    \draw[arc, green] (7,0) to[bend left=45] (11,0);
    \draw[arc, yellow] (8,0) to[bend left=45] (10,0);
    \draw[arc, green] (3,0) to[bend right=45] (7,0);
    \draw[arc, red] (4,0) to[bend right=45] (6,0);
    
    \foreach \x in {1,...,11} {
            \node[dot, label=below:{$\x$}] at (\x,0) {};
        }
\end{tikzpicture}
\end{center}
\end{example}

\section{Future work}\label{sec: conclusion}

\subsection{Enumeration for non-acyclic sets}

An obvious open question is to find a generating function $A_J(x,y) = \sum a_{n,k}^Jx^ny^k$ for sets $J$ not covered in this paper. In particular, it seems difficult to consider cases when $J$ is not acyclic. We do address one such non-acyclic case in this paper: $\{1,2,n\}$. Reasonable sets to consider next might be $\{1,3,n\}$, $\{1,2,n-1\}$, or similar sets. It may also be reasonable to attempt enumerating $\{1,6\}$ by index.

\subsection{Permutations with other displacement sets}\label{subsec: morerelate}

Section~\ref{sec: bijection} tells us there is a bijection between permutations $\pi\in\S_{n-1}$ with displacements in $D = \{-2,-1,j-2\}$ and indecomposable meanders $\frac{\lambda}{\mu}$ with $\lambda_1>\mu_1$ so that the elements of $\lambda$ and $\mu$ are taken from the set $J = \{1,j\}.$ A natural question to ask is whether we can change the sets $D$ and $J$ in a predictable way to obtain similar bijections. We do suspect this to be the case. For example, it is possible to construct a bijection between permutations $\pi \in \S_{n-2}$ with displacements restricted to $\{-2,0,1\}$ and $\pi_1\neq 1$ and indecomposable meanders with part sizes restricted to $\{2,3\}$. It is similarly possible to construct a bijection between permutations $\pi \in \S_{n-7}$ with displacements restricted to $\{-2,1,2\}$ and indecomposable meanders with part sizes restricted to $\{3,4\}$. (Note this also follows from Theorems~\ref{thm: 14Indec} and \ref{thm: 34} together with the reverse-complement of the corresponding permutation.) It would be interesting if there were a general answer to the question of how indecomposable meanders with restricted part sizes relate to permutations with restricted displacements.

\subsection{Alternating sums by index}

In \cite{SY20}, the authors prove a conjecture from \cite{CHMW15} regarding the alternating sum $\sum (-1)^k c_{n,k}$ where $c_{n,k}$ is the number of meanders of index $k$ where $\lambda$ and $\mu$ are taken to be partitions. 
Below we consider a similar the alternating sum for meanders of compositions with parts in $\{1,2\}.$

 \begin{theorem}
    \[
    \sum_{k=1}^n (-1)^k a_{n,k}^{\{1,2\}} = 
    \begin{cases}
    0 & n \equiv 2,5 \pmod 6\\
    1 & n \equiv 0,4 \pmod 6\\
    -1 & n \equiv 1,3 \pmod 6
    \end{cases}
    \]
\end{theorem}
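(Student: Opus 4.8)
The plan is to read the alternating sum directly off the bivariate generating function by setting $y=-1$. Since $A_{\{1,2\}}(x,y)=\sum_{n,k}a_{n,k}^{\{1,2\}}x^ny^k$, the substitution $y=-1$ collapses the index variable: the coefficient of $x^n$ in $A_{\{1,2\}}(x,-1)$ is precisely $\sum_k(-1)^k a_{n,k}^{\{1,2\}}$. For $n\geq 1$ the index of a nonempty meander lies between $1$ and $n$, so this equals the sum $\sum_{k=1}^{n}(-1)^k a_{n,k}^{\{1,2\}}$ appearing in the statement.

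First I would specialize the closed form from Theorem~\ref{thm: 12Indec}. Setting $y=-1$, the denominator becomes $(1+x-x^2)(1-x)+2x^2$. Expanding $(1+x-x^2)(1-x)=1-2x^2+x^3$ and adding $2x^2$ collapses this to $1+x^3$, so that
\[
A_{\{1,2\}}(x,-1)=\frac{1-x}{1+x^3}.
\]
This simplification is the one computational point that must be checked carefully; once it is in hand, everything else is routine coefficient extraction.

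Next I would extract coefficients using $\tfrac{1}{1+x^3}=\sum_{m\geq 0}(-1)^m x^{3m}$, which gives
\[
\frac{1-x}{1+x^3}=\sum_{m\geq 0}(-1)^m x^{3m}-\sum_{m\geq 0}(-1)^m x^{3m+1}.
\]
Hence the coefficient of $x^n$ vanishes when $n\equiv 2\pmod 3$, equals $(-1)^{n/3}$ when $n\equiv 0\pmod 3$, and equals $(-1)^{(n-1)/3+1}$ when $n\equiv 1\pmod 3$.

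Finally I would reconcile this mod-$3$ description with the mod-$6$ table by tracking the parity of the relevant quotient $m$. The residues $n\equiv 2,5\pmod 6$ are exactly those with $n\equiv 2\pmod 3$, giving $0$; for $n\equiv 0\pmod 6$ one has $m=n/3$ even and for $n\equiv 4\pmod 6$ one has $n=3m+1$ with $m$ odd, both yielding $+1$; and $n\equiv 1\pmod 6$ gives $n=3m+1$ with $m$ even while $n\equiv 3\pmod 6$ gives $m=n/3$ odd, both yielding $-1$. This reproduces the stated cases. I do not expect a genuine obstacle here: the only delicate step is verifying the denominator reduces to $1+x^3$, after which the proof is a short series expansion and a parity bookkeeping argument.
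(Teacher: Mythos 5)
Your proof is correct, but it takes a genuinely different route from the paper. The paper constructs an explicit parity-reversing ``almost-bijection'' $\varphi$ on $\M_n^{\{1,2\}}$, pairing meanders of even index with meanders of odd index by toggling a leading $1|1$ with a leading $2$ (recursing past common $1|2$ prefixes), and then evaluates the index by hand in the single unpaired case; this is self-contained combinatorics and makes the near-cancellation structurally visible. You instead specialize the already-proved generating function of Theorem~\ref{thm: 12Indec} at $y=-1$: the denominator $(1+x-x^2)(1-x)+2x^2$ does collapse to $1+x^3$ (I checked: $1+x-x^2-x-x^2+x^3+2x^2=1+x^3$), giving $A_{\{1,2\}}(x,-1)=\tfrac{1-x}{1+x^3}$, and your coefficient extraction and mod-$6$ bookkeeping are all correct; the only hypothesis you use implicitly, that $a_{n,0}^{\{1,2\}}=0$ for $n\ge 1$ so the full alternating sum equals the sum from $k=1$, holds because every nonempty meander has at least one path. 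Your argument is shorter and essentially mechanical once Theorem~\ref{thm: 12Indec} is available, but it inherits that theorem as a dependency and offers less combinatorial insight into why the cancellation is so nearly perfect; the paper's involution buys a bijective explanation at the cost of some case analysis.
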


\begin{proof}
    We can establish an almost-bijection $\varphi$ from meanders in $\M_n^{\{1,2\}}$ with even index to meanders in $\M_n^{\{1,2\}}$ with odd index that
     fails in exactly one case when $n \not\equiv 2 \pmod 3$.

     For $\frac{\lambda}{\mu}$ with even index, define $\frac{\lambda'}{\mu'} = \varphi(\frac{\lambda}{\mu})$ recursively as follows. 
    If $\lambda = (1,1,\lambda_3,\ldots,\lambda_r)$, then take $\lambda'=(2,\lambda_3,\ldots, \lambda_r)$ and $\mu'=\mu$. Similarly, if $\lambda = (2,\lambda_2,\ldots,\lambda_r)$, then take $\lambda'=(1,1,\lambda_2,\ldots, \lambda_r)$ and $\mu'=\mu$.  Note that in each case, you change the parity of the index. If $\mu_1=2,$ then in the first case you increase the index by 1 since you replace a path with a cycle, and in the second case you decrease the index by 1 since you replace a cycle with a path. If $\mu_1=1,$ then in the first case you decrease the index by 1 since you remove an isolated point and in the second case, you increase the index by 1 since you add an isolated point.

    If $\lambda = (1,2,\ldots)$ and $\mu$ is $(1,1, \mu_3,\ldots, \mu_s)$ or $(2,\mu_2,\ldots, \mu_s)$, then do the same as above to $\mu$ instead of $\lambda.$ The parity of the index is similarly changed. 
    If both $\lambda$ and $\mu$ start with $1,2$, then apply the bijection to $\bar\lambda,\bar\mu$, each obtained by deleting the $1,2$ at the beginning. Let $\phi(\frac{\lambda}{\mu}) = \frac{1|2| \bar\lambda'}{1|2|\bar\mu'}.$ 
    
    Note this bijection only fails if both $\lambda$ and $\mu$ are of the form $(1,2,1,2,1,2,1,\ldots,2)$ (when $n\equiv 0 \bmod 3$) or $(1,2,1,2,1,2,\ldots1)$ (when $n\equiv 1 \bmod 3$). In each of these special cases, you can compute the index and obtain the result above. 
\end{proof}

A similar question could easily be asked for meanders whose compositions appear in some fixed set $J.$ Numerical evidence suggests the answers may be reasonable to find. \\

\subsection*{Disclaimer} The views expressed in this article are those of the authors and do not reflect the official policy or position of the U.S. Naval Academy, Department of the Navy, the Department of Defense, or the U.S. Government.

\bibliographystyle{amsplain}

\begin{thebibliography}{99}

\bibitem{B10} V. Baltic, On the number of certain types of strongly restricted permutations. \textit{Applicable Analysis and Discrete Mathematics} (2010), 119-135.

\bibitem{CCH20} A. Cameron, V.E. Coll, and M. Hyatt,  Combinatorial index formulas for Lie algebras of seaweed type. \textit{ Communications in Algebra}  48.12 (2020), 5430-5454.

\bibitem{Cet22}A. Cameron et al. Seaweed algebras. arXiv preprint arXiv:2204.06634 (2022).

\bibitem{CHMW15} V. Coll, M. Hyatt, C. Magnant, and H. Wang, Meander graphs and Frobenius seaweed Lie
algebras II, \textit{J. Generalized Lie Theory and Applications}, (2015), 9:1–6.

\bibitem{CGM11} V. Coll, A. Giaquinto, and C. Magnant, Meanders and Frobenius seaweed Lie algebras, \textit{ J. Generalized Lie Theory and Applications}, (2011), 5:1–7.

\bibitem{CHM17} V. Coll, M. Hyatt, and C. Magnant, Symplectic meanders. \textit{Communications in Algebra}, (2017) 45(11), 4717–4729.

\bibitem{DK00} V. Dergachev and A. Kirillov, Index of Lie algebras of seaweed type. \textit{J. Lie Theory} (2000), 10(2), pp.331-343.

\bibitem{KL17} A. Karnauhova and S. Liebscher, Connected components of meanders: I. Bi-rainbow meanders. \textit{Discrete and Continuous Dynamical Systems}, (2017), 37(9) pp.4835-4856.

\bibitem{M23} N. Mayers, Periodicity and the Index of Integer Partitions Integers. \textit{Electronic Journal of Combinatorial Number Theory}, (2023) 23.

\bibitem{SY20}
S. Seo and A.J. Yee, Index of seaweed algebras and integer partitions. \textit{The Electronic Journal of Combinatorics}. (2020), 27(1), P1.47.


\bibitem{OEIS} OEIS Foundation Inc. (2023), The On-Line Encyclopedia of Integer Sequences, Published electronically at \url{https://oeis.org}.

\end{thebibliography}

\end{document}